\definecolor{ourgreen}{cmyk}{1,0,1,0}
\definecolor{ourblue}{HTML}{2867FF}
\definecolor{ourred}{HTML}{FF2828}
\newcommand{\blue}{\color{ourblue}}
\newcommand{\red}{\color{ourred}}
\newcommand{\green}{\color{ourgreen}}
\NewDocumentCommand{\magenta}{}{\color{magenta}}
\NewDocumentCommand{\bb}{}{{\blue \raisebox{.4pt}{\scaleobj{.3}{\begin{tikzpicture}
    \node[isosceles triangle,
    isosceles triangle apex angle=60,rotate=90,draw,very thick,fill] (p) at (0,0) {};
    \draw[draw=none] (0,0.15) circle (.275) {};
\end{tikzpicture}}}}} 
\NewDocumentCommand{\Bb}{}{{\blue \raisebox{.4pt}{\scaleobj{.3}{\begin{tikzpicture}
    \node[isosceles triangle,
    isosceles triangle apex angle=60,rotate=90,draw,line width=2pt] (p) at (0,0) {};
    \draw[draw=none] (0,0.15) circle (.275) {};
\end{tikzpicture}}}}}
\NewDocumentCommand{\ar}{}{{\red {\scaleobj{.4}{\begin{tikzpicture}
    \node[circle,draw,thick,fill=red] (p) at (0,0) {};
    \draw[draw=none] (0,0) circle (.2) {};
\end{tikzpicture}}}}}
\NewDocumentCommand{\Ar}{}{{\red {\scaleobj{.4}{\begin{tikzpicture}
    \node[circle,draw,ultra thick] (p) at (0,0) {};
    \draw[draw=none] (0,0) circle (.2) {};
\end{tikzpicture}}}}}
\NewDocumentCommand{\cg}{}{{\green 
\raisebox{.4pt}{\scaleobj{.5}{\begin{tikzpicture}
    \node[regular polygon sides=4,draw,thick,fill] (p) at (0,0) {};
    \draw[draw=none] (0,.07) circle (.175) {};
\end{tikzpicture}}}}}
\NewDocumentCommand{\dm}{}{\raisebox{.3pt}{\scaleobj{.4}{\magenta \begin{tikzpicture}
    \node[regular polygon,draw,thick,fill=magenta] (p) at (0,0) {};
    \draw[draw=none] (0,0.07) circle (.225) {};
\end{tikzpicture}}}}
\NewDocumentCommand{\Dm}{}{\raisebox{.3pt}{\scaleobj{.4}{\magenta \begin{tikzpicture}
    \node[regular polygon,draw,ultra thick] (p) at (0,0) {};
    \draw[draw=none] (0,0.07) circle (.225) {};
\end{tikzpicture}}}}
\NewDocumentCommand{\Cg}{}{{\green 
\raisebox{.4pt}{\scaleobj{.5}{\begin{tikzpicture}
    \node[regular polygon sides=4,draw,very thick] (p) at (0,0) {};
    \draw[draw=none] (0,.07) circle (.175) {};
\end{tikzpicture}}}}}
\NewDocumentCommand{\bigbb}{}{{\blue \raisebox{0.5pt}{\scaleobj{.3}{\begin{tikzpicture}
    \node[isosceles triangle,
    isosceles triangle apex angle=60,rotate=90,draw,ultra thick,fill] (p) at (0,0) {};
\end{tikzpicture}}}}}
\NewDocumentCommand{\bigar}{}{{\red \raisebox{-1.5pt}{\scaleobj{.4}{\begin{tikzpicture}
    \node[circle,draw,very thick,fill] (p) at (0,0) {};
\end{tikzpicture}}}}}
\NewDocumentCommand{\bigcg}{}{{\green 
\raisebox{0.2pt}{\scaleobj{.45}{\begin{tikzpicture}
    \node[regular polygon, regular polygon sides=4, draw, fill] (p) at (0,0) {};
\end{tikzpicture}}}}}
\NewDocumentCommand{\bigdm}{}{\scaleobj{.45}{\magenta \begin{tikzpicture}
    \node[regular polygon,draw,fill=magenta] (p) at (0,0) {};
\end{tikzpicture}}}
\NewDocumentCommand{\bigBb}{}{{\blue \raisebox{0.5pt}{\scaleobj{.3}{\begin{tikzpicture}
    \node[isosceles triangle,
    isosceles triangle apex angle=60,rotate=90,draw,line width=3pt] (p) at (0,0) {};
\end{tikzpicture}}}}}
\NewDocumentCommand{\bigAr}{}{{\red \raisebox{-1.5pt}{\scaleobj{.4}{\begin{tikzpicture}
    \node[circle,draw,line width=2.25pt] (p) at (0,0) {};
\end{tikzpicture}}}}}
\NewDocumentCommand{\bigCg}{}{{\green 
\raisebox{0.2pt}{\scaleobj{.45}{\begin{tikzpicture}
    \node[regular polygon, regular polygon sides=4, draw, line width=2pt] (p) at (0,0) {};
\end{tikzpicture}}}}}
\NewDocumentCommand{\growth}{}{\mathfrak{g}}
\NewDocumentCommand{\PP}{m}{\operatorname{PP}_#1}
\NewDocumentCommand{\goldC}{}{\hyperref[gold]{Goldstein's Criterion}}
\NewDocumentCommand{\defnsb}{m}{\textbf{#1}}
\tikzset{>={Classical TikZ Rightarrow[length=1.375mm]}}
\newaliascnt{theorem}{figure}
\newtheorem{theorem}[theorem]{Theorem}
\newaliascnt{lemma}{figure}
\newtheorem{lemma}[lemma]{Lemma}
\newaliascnt{proposition}{figure}
\newtheorem{proposition}[proposition]{Proposition}
\newaliascnt{corollary}{figure}
\newtheorem{corollary}[corollary]{Corollary}
\newaliascnt{question}{figure}
\newtheorem{question}[question]{Question}
\theoremstyle{definition}
\newaliascnt{algorithm}{figure}
\newaliascnt{definition}{figure}
\newtheorem{definition}[definition]{Definition}
\newaliascnt{example}{figure}
\theoremstyle{remark}
\newaliascnt{remark}{figure}
\newlist{steps}{enumerate}{3}
\setlist[steps]{label=Step \arabic*.,ref=Step \arabic*,leftmargin=15mm}
\newlist{questions}{enumerate}{3}
\setlist[questions]{label=Q.\ \arabic*.,ref=Question \arabic*,leftmargin=15mm}
\title[Growth and language complexity of potential positivity]{Growth and language complexity of potentially positive elements in free groups}
\author[]{Emma Dinowitz}
\address{Department of Mathematics, CUNY Graduate Center, New York,
NY 10016}  \email{emmad4867@gmail.com}
\author[]{Lucy Koch-Hyde}
\address{Department of Mathematics, CUNY Graduate Center, New York,
NY 10016}  \email{lhyde@gradcenter.cuny.edu}
\author[]{Siobh\'an O'Connor}
\address{Department of Mathematics, CUNY Graduate Center, New York,
NY 10016}  \email{doconnor@gradcenter.cuny.edu}
\author[]{
  \'Eamonn Olive
}
\email{ejolive97@gmail.com}
\begin{document}%
\begin{abstract}
An word in a free group is called ``potentially positive'' if it is automorphic to an element which is written with only positive exponents.
We will develop automata to analyze properties of potentially positive words.
We will use these to give new bounds on the asymptotic growth of potentially positive elements in free groups of 2 to 7 generators.
We prove the bounds for $F_2$ are tight, giving the growth function up to a constant multiplier.
We use the same tools to show that certain restricted automata cannot recognize the set of potentially positive elements.
\end{abstract}%
\maketitle%
\section{Introduction}
A word in a free group $F_r=F(x_1,\dots,x_r)$ is called \defnsb{positive} if it can be expressed as a product of the generators, $x_1,\dots, x_r$, without inverses.

Groups which can be given as a presentation with a single positive relator are called \textbf{positive one-relator groups}.
Several results are known about this class of groups:
\begin{itemize}
\item It is shown in \cite{Baumslag} that positive one-relator groups are residually solvable. \cite{Linton} strengthens this by proving that positive one-relator groups are free-by-solvable.\footnote{\cite{Linton} also shows that torsion-free positive one-relator groups satisfy an even stronger property. They are free-by-($\mathbb Q$-solvable).}
\item It is shown in \cite{Wise} that positive one-relator groups satisfying the $C'(1/6)$ small cancellation condition are residually finite.
\end{itemize}
Unfortunately, the probability that a randomly selected word of a given length is positive vanishes exponentially as the length increases; the number of positive words of length $n$ is $r^n$ while the total number of words of length $n$ is $2r\left(2r-1\right)^{n-1}$.

However, applying a free group automorphism to the relators of a group presentation produces an isomorphic group.
Thus if a word is automorphic to a positive word, its associated one-relator group has properties analogous to the ones above.
Such words are called \textbf{potentially positive}, and were first introduced in \cite{OP2002}.

This motivates our main topic of investigation:
\begin{question}\label{Q:growth}
How many potentially positive words are there of length $n$ in $F_r$? i.e.\ How does the set of potentially positive words grow?
\end{question}

Previous work has calculated asymptotic approximations of the growth of other similar classes of words. 
We call words that are automorphic to a generator \defnsb{primitive}.
\cite{F2prim} gives the growth of primitive words in $F_2$ as $\sqrt{3}^n$, and \cite{PuderWu} gives the growth of primitive words in $F_r$ as $(2r-3)^n$ when $r>2$.
(These are asymptotic approximations. We will formally define what sort of approximations these are later.)
Since primitive words are also potentially positive, these form a starting lower bound for the growth of potentially positive words.
This also tells us that, apart from rank 2, the potentially positive elements grow significantly faster than the positive elements.
In fact the density of the positive elements among the potentially positive elements goes to zero as the size increases.

The main result of this paper answers \autoref{Q:growth} for $F_2$.
More specifically, we prove the following theorem.
\begin{theorem}\label{thm:main}
  The number of potentially positive words in $F_2$ of length $n$ is $\Theta(\lambda_1^n)$, where $\lambda_1 \approx 2.50506841362147$ is the largest root of the polynomial
  \begin{align}
    \lambda^4 - 3 \lambda^3 + \lambda^2 + \lambda - 1
  \end{align}
\end{theorem}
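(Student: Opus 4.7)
The natural approach, foreshadowed in the abstract, is to package the set of reduced words representing potentially positive elements of $F_2$ as the language of a finite automaton $\mathcal{A}$, and then invoke the standard correspondence between the number of length-$n$ paths in a finite graph and the spectral radius of its adjacency matrix. Concretely, the number of words of length $n$ accepted by $\mathcal{A}$ is $\Theta(\lambda^n)$, where $\lambda$ is the largest modulus of an eigenvalue of the transition matrix restricted to its dominant strongly connected component. The bulk of the work is therefore to (i) construct such an $\mathcal{A}$ whose language equals $\PP{F_2}$ (or tightly sandwiches it), and (ii) analyze the resulting matrix.

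For the upper bound, I would define states that track just enough local information about a reduced word to certify potential positivity. A natural candidate is a Whitehead-graph style invariant: as one scans a cyclically reduced candidate word, record the multiset of adjacent letter pairs together with enough bookkeeping to detect local obstructions to a Whitehead reduction to a positive word. Since $F_2$ has only four letters, such an invariant is bounded and can be encoded in finitely many states. After pruning unreachable states and minimizing, I would compute the characteristic polynomial of the transition matrix of the dominant strongly connected block and verify that $\lambda^4 - 3\lambda^3 + \lambda^2 + \lambda - 1$ is the relevant factor, yielding the upper bound $O(\lambda_1^n)$.

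For the lower bound I would exhibit an explicit regular family of potentially positive words of growth $\Omega(\lambda_1^n)$. One approach is to fix a small set of Nielsen moves which preserve positivity and describe, via a second automaton, the set of all words obtained by applying finite sequences of these moves to positive seeds; another is to mark a subgraph inside $\mathcal{A}$ whose edges can be certified by an explicit inverse construction as coming from potentially positive words. Since primitive elements in $F_2$ grow only like $\sqrt{3}^n$ (as cited from \cite{F2prim}), this lower-bound family must be genuinely richer than primitives.

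The main obstacle, as in many such growth results, is matching the two growth rates exactly rather than merely getting $\Omega(c^n)$ and $O(C^n)$ for unrelated constants. Reproducing an algebraic number of degree $4$ from below forces the lower-bound subautomaton to share the dominant strongly connected component of $\mathcal{A}$, which in turn forces $\mathcal{A}$ to be essentially tight. I therefore expect the hardest step to be proving this tightness, namely that the difference between the language accepted by $\mathcal{A}$ and the true set $\PP{F_2}$ has strictly smaller exponential growth — this is where rank-$2$–specific structure (e.g.\ the classification of automorphisms via $GL_2(\mathbb{Z})$, or a direct Whitehead-graph case analysis) would need to be exploited in a way that does not extend to general $F_r$, consistent with the paper obtaining only bounds, not a tight asymptotic, for $r \geq 3$.
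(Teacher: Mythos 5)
Your lower-bound strategy matches the paper's: exhibit an explicit finite automaton (the machine in \autoref{machine:lb}) whose paths lift to potentially positive words and compute its Perron eigenvalue. Your upper-bound strategy, however, has a genuine gap. You propose to ``construct such an $\mathcal{A}$ whose language equals $\PP{2}$ (or tightly sandwiches it)'' by encoding a bounded Whitehead-style invariant in finitely many states. The first option is impossible: the paper later proves (\autoref{thm:noCFL}) that $\PP{2}$ is not regular, and not even a well-nested MCFL, so no finite automaton recognizes it. The second option (``tightly sandwiches'') is the right instinct but your proposal gives no mechanism for producing the sandwiching family or for proving its growth rate converges to the lower bound, and you explicitly flag this tightness step as the hardest one without offering a way to carry it out.

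The missing idea is an \emph{infinite descending family} of regular over-approximations, driven by Goldstein's Criterion together with \autoref{lemma:switch}. One starts from the regular language $G$ of cyclically reduced words satisfying Goldstein's Criterion for a fixed $(x,y)$, and refines it by noting that any potentially positive word must, after applying the automorphism $\bb\mapsto\bb\ar$, again satisfy the criterion for one of two $(x,y)$ pairs. Iterating produces a binary tree of regular languages (\autoref{figure:tree}) whose $n$th layer $Y_n$ is a successively tighter regular upper bound on $\PP{2}$. The paper then shows (\autoref{prop:dont go left} via forbidden-subword descriptions and growth-sensitivity of irreducible sofic shifts) that any branch containing an $L$ grows strictly slower than the all-$R$ branch, so $\mathfrak g(Y_n)\sim\mathfrak g(R^n)$, and that $\bigcap_n R^n$ is exactly the language of the lower-bound automaton (\autoref{prop:Rlim}). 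This is how the upper and lower bounds are forced to coincide; there is no single minimized automaton being analyzed, and the algebraic degree-$4$ number appears because the intersection $R^\infty$ is precisely the lower-bound machine. Without this iterative refinement step, there is no route from a necessary condition (Goldstein's Criterion alone gives $\lambda\approx 2.53209$, which is strictly too large) to the tight constant $\lambda_1\approx 2.50507$.
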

From this calculation we see that potentially positive words do exponentially vanish in the set of all reduced words in $F_2$, but at a slower rate than positive words.

We also give novel lower bounds for the growth of potentially positive words in $F_r$ for $3\leq r\leq 7$.
To our knowledge, the best lower bounds for ranks greater than 7 remain those coming from \cite{PuderWu}. Further progress on \autoref{Q:growth} likely requires an algorithm that recognizes potentially positive words in $F_r$; our answer in $F_2$ relies on one for $F_2$ first provided by Goldstein in \cite{Goldstein}, and improved in \cite{KHOO}. It is not known whether such an algorithm exists for $r>2$, thus we ask:

\begin{question}
How complicated is it to decide whether a word is potentially positive?
\end{question}

In particular, we would like to determine what sorts of automata can recognize potential positivity.
The existing algorithms show that for rank two anything at least as powerful as a Turing machine can decide it.
In fact \cite{KHOO} provides an algorithm which uses worst-case linear memory and so the rank two case can be decided with a linear bounded automaton.

In this paper we will approach the question in the opposite direction.
We will show that certain models of computation (including finite state machines and push down automata) are insufficient for deciding $F_r$ for $r\geq 2$.

\section{Background}
\subsection{Growth and density}

This paper deals heavily with the growth of sets of words.

\begin{definition}
Let $S$ be a set of words on a finite alphabet $\Sigma$.
We define the \defnsb{growth function} $\mathfrak g_n(S)$ to be the number of the words in $S$ with length $n$.
\end{definition}

We will define several asymptotic ways to consider growth for these functions.
The finest notion of growth we will use is asymptotic equivalence.
\begin{definition}
We say two functions $f, g : \mathbb N\rightarrow \mathbb N$ are \defnsb{asymptotically equivalent}, written $f \sim g$, when
\[
\lim_{n\rightarrow\infty} \dfrac{f(n)}{g(n)} = 1
\]
\end{definition}

Coarser notions of growth will be useful.

\begin{definition}\label{dfn:comp}
We will have the following classes of $\mathbb N \rightarrow \mathbb N$ functions:
\begin{itemize}
\item $O(g)= \left\{ f\mid\exists C,m:\forall n\geq m: Cg(n)> f(n)\right\}$
\item $\Omega(g)= \left\{ f\mid g\in O(f)\right\}$
\item $\Theta(g)= \Omega(g)\cap O(g)$
\end{itemize}
\end{definition}

These create a partial order, being analogous to $\leq$, $\geq$, and $=$, respectively.
The most well known of these notions is certainly the first, called ``big-O'', which is commonly used in algorithmic analysis.
We will make use of each of them.
We will also make sparing use of the classes analogous to $<$ and $>$:

\begin{itemize}
\item $o(g)= O(g)\setminus\Theta(g)$
\item $\omega(g)= \Omega(g)\setminus \Theta(g)$
\end{itemize}

\begin{definition}
A set $B\subseteq A\subseteq \Sigma^\ast$ has a \defnsb{density function} defined as
\[
n \mapsto\dfrac{\mathfrak g_n(B)+1}{\mathfrak g_n(A)+1}
\]
\end{definition}

\begin{definition}
A set $B\subseteq A$ is \defnsb{negligible} in $A$ when the limit of the density is $0$.
Alternatively $B$ is \defnsb{generic} in $A$ when the limit of the density is $1$.
\end{definition}

A stronger condition will also be useful.

\begin{definition}
A set $B\subseteq A$ with density function $d$ is \defnsb{exponentially negligible} in $A$ when $d(n) \in O\left(C^{-n}\right)$ for some constant $C > 1$.
Likewise $B$ is \defnsb{exponentially generic} when its relative complement, $A\setminus B$, is exponentially negligible.
\end{definition}


\subsection{Free groups}
In order to increase the visual contrast between letters we will generally write the generators of a free group using colored shapes ($\ar$, $\bb$, $\cg$, and $\dm$).
We will represent the inverse of a generator with the outline of the corresponding shape ($\Ar$, $\Bb$, $\Cg$, and $\Dm$ respectively).

We will represent automorphisms by showing the action only on the affected generators.
For example, in $F(\ar,\bb)$ we use $\bb \mapsto \bb \ar$ to indicate the automorphism which performs that mapping and behaves as the identity on $\ar$.

We say a word is \defnsb{cyclically reduced} if its first and last letters are not each other's inverses. 
Similarly, we can find a word's \defnsb{cyclic reduction} by deleting its first and last letter until it is cyclically reduced.
Since cyclic reduction is always a conjugation, the cyclic reduction of a word is always in the same automorphic orbit as the word.

We will see in the following results that finding the growth rate of the cyclically reduced words is often sufficient for determining the growth rate of all words.
In fact, if a set of words grows faster than the conjugacy class of one of the words, then the growth of the cyclically reduced words is the same as that of the whole set.
We will see this in \autoref{cor:CRisenough}, but first we need the following technical lemma.

\begin{lemma}\label{lem:maxgro}
  Let us have two functions $f$ and $g$, and some $b > a > 1$ such that $f(n) \in O(a^n)$ and $g(n) \in \Omega(b^n)$.
  Roughly speaking, the exponential part of $g$ has a greater base than the exponetial part of $f$.
  The function $x\mapsto \sum_{i=0}^x f(i)g(x-i)$ is in $\Theta(g)$.
\end{lemma}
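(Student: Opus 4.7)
The plan is to prove $h \in \Theta(g)$ by establishing the two containments $h \in \Omega(g)$ and $h \in O(g)$ separately. The lower bound $h \in \Omega(g)$ is essentially immediate: extracting the single term with $i = 0$ gives $h(x) \geq f(0) g(x)$, so whenever $f(0) \geq 1$ we have $h \geq g$ pointwise. If $f(0) = 0$ one picks the smallest $i_0$ with $f(i_0) \geq 1$ and appeals to exponential control on $g$ to bound $g(x)/g(x - i_0)$ by a constant, recovering $h \in \Omega(g)$.

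The substantive work lies in the upper bound $h \in O(g)$. Fix $C_1$ and $C_2$ so that $f(i) \leq C_1 a^i$ and $g(n) \geq C_2 b^n$ for all sufficiently large $i, n$. The goal is to exhibit a uniform bound $h(x)/g(x) \leq C$. The key observation is that because $a < b$, the convolution produces a geometric dampening with common ratio $a/b < 1$: if one can show $g(x-i)/g(x) \leq C_3 b^{-i}$, then each summand of $h(x)/g(x)$ is bounded by a constant multiple of $(a/b)^i$, and the total series converges to $C_1 C_3 (1 - a/b)^{-1}$. To make this rigorous I would split the sum at a threshold index $K$: the initial block $i \leq K$ contains only finitely many terms and is trivially $O(g(x))$, while the tail $i > K$ is estimated via the geometric inequality just described.

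The main obstacle is precisely establishing the ratio inequality $g(x-i)/g(x) \leq C_3 b^{-i}$. The hypothesis $g \in \Omega(b^n)$ supplies only a lower envelope for $g$, and in principle $g$ could exhibit large upward excursions at some $x - i$ that would spoil the ratio. The remedy I would invoke is the implicit upper envelope that holds in the paper's setting: the functions of interest are growth functions of subsets of $F_r$ and are therefore automatically bounded above by $(2r)^n$. Combining this upper envelope with the hypothesized lower envelope gives two-sided exponential control on $g$, from which the ratio bound $g(x-i)/g(x) \leq C_3 b^{-i}$ follows, and the lemma is concluded by assembling the two halves.
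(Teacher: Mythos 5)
Your proposal follows the same architecture as the paper's proof: trivial lower bound from the $i=0$ term, upper bound via the ratio inequality $g(x-i)/g(x)\leq C_3 b^{-i}$ which turns the convolution into a convergent geometric series. You also correctly flag the weak point: the hypothesis $g\in\Omega(b^n)$ supplies only a lower envelope, and by itself does not yield the ratio inequality. (The paper asserts that $g\in\Omega(b^n)$ ``implies $bg(x-1)\leq g(x)$,'' which is not a consequence of the definition of $\Omega$ --- a function with spikes can satisfy $g(n)\geq Cb^n$ while having $g(x)/g(x-1)$ fall below $b$ infinitely often.)

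However, the remedy you propose does not close the gap. Combining $g(n)\geq C_2 b^n$ with an upper envelope $g(n)\leq D\,c^n$ where $c=2r-1$ (or $2r$) gives only $g(x-i)/g(x)\leq (D/C_2)(c/b)^x c^{-i}$. In the intended application the hypothesis is merely $b>\sqrt{2r-1}$, so typically $b<c$ and the factor $(c/b)^x$ is unbounded; in particular for $i$ on the order of $x/2$ the bound $C_3 b^{-i}$ fails badly. Concretely, if $g(n)=b^n$ off a sparse set and $g(n)=c^n$ on it, both envelopes hold but $\sum_i f(i)g(x-i)/g(x)$ is unbounded, so $h\notin O(g)$ and the conclusion fails. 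The inequality you need is genuinely a statement about two-sided control of $g$ \emph{with the same base}, i.e.\ $g\in\Theta(b^n)$, or equivalently a supermultiplicativity hypothesis $g(x)\geq C b^i g(x-i)$ --- conditions that the paper's applications happen to satisfy (the relevant $g$ there is $\Theta(\lambda_1^n)$) but that neither the lemma's statement nor your invocation of the crude $(2r)^n$ envelope actually secures. The threshold-splitting you sketch also does not help: the ``initial block'' $i\leq K$ already contains terms like $f(1)g(x-1)$, and bounding $g(x-1)$ by $g(x)$ is exactly the missing ratio inequality.
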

\begin{proof}
    The lower bound is trivial, so we will proceed to the upper bound.

    Observe the following.
    \begin{align*}
    O\left(\sum_{i=0}^x f(i)g(x-i)\right) &\subseteq O\left(\sum_{i=1}^x \dfrac{f(i)g(x)}{b^i}\right) \tag{1} \label{eqn:recsub}\\
    &\subseteq O\left(g(x)\sum_{i=1}^x\dfrac{a^i}{b^i}\right) \tag{2} \label{eqn:asub} \\
    &\subseteq O\left(g(x)\sum_{i=1}^\infty\dfrac{a^i}{b^i}\right) \\
    &= O\left(g\right) \tag{3} \label{eqn:conv} \\ 
    \end{align*}
    (\ref{eqn:recsub}) holds because $g(n)\in \Omega(b^n)$ implies $bg(x-1) \leq g(x)$ and thus $g(x-i)\leq b^{-1}g(x)$.
    (\ref{eqn:asub}) holds because $f(n) \in O(a^n)$.
    (\ref{eqn:conv}) holds because $b > a$ and thus the series converges to $(1-b/a)^{-1}$, a constant.
\end{proof}

\begin{corollary}\label{cor:CRisenough}
    Let $S$ be a subset of $F_r$ that contains only cyclically reduced words.
    Let $\bar S$ be the set of words whose cyclic reductions are in $S$.
    If the growth of words of length $\ell$ in $S$ is $g \in \Omega\left(b^\ell\right)$ for some $b> \sqrt{2r-1}$, then the growth of words of length $\ell$ in $\bar S$ is $\Theta(g)$.
\end{corollary}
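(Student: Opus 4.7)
The plan is to parameterize each word $u \in \bar S$ of length $n$ as a conjugate $u = v w v^{-1}$ with $w \in S$ its cyclic reduction of length $\ell$ and $v$ a freely reduced conjugator of length $k = (n - \ell)/2$; bounding the number of such pairs will reduce the problem to a convolution sum amenable to \autoref{lem:maxgro}.

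First I would verify the parameterization: for $u \in \bar S$ of length $n$, its cyclic reduction $w$ is in $S$ of some length $\ell$, and $u$ is uniquely expressed as $v w v^{-1}$ where $v$ is freely reduced of length $k = (n - \ell)/2$ and the product is freely reduced (in particular $n - \ell$ is even). For fixed $w$ the number of valid $v$'s of length $k$ is at most the total count of freely reduced words of that length, namely $2r(2r-1)^{k-1} \in O((2r-1)^k)$. Summing over all admissible $(w, v)$ pairs gives
\[
    \growth_n(\bar S) \leq C \sum_{\substack{\ell + 2k = n \\ \ell, k \geq 0}} g(\ell) \, (2r-1)^k
\]
for some constant $C$. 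Reindexing by $i = 2k$ and setting $f(i) := C(2r-1)^{i/2}$ for even $i$ and $f(i) := 0$ for odd $i$, this becomes $\sum_{i=0}^n f(i)\, g(n-i)$ with $f(i) \in O(a^i)$ for $a := \sqrt{2r-1}$. Since $g \in \Omega(b^\ell)$ with $b > a$ by hypothesis, \autoref{lem:maxgro} applies and yields $\growth_n(\bar S) \in O(g)$. The matching lower bound $\growth_n(\bar S) \geq g(n) \in \Omega(g)$ is immediate from $S \subseteq \bar S$, so $\growth_n(\bar S) \in \Theta(g)$.

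The only subtle point is the reindexing: because $v$ contributes $2k$ letters to $u$ (as $v$ appears once on each side), the effective exponential base in the auxiliary function $f$ is $(2r-1)^{1/2} = \sqrt{2r-1}$ rather than $2r-1$. This is precisely why the threshold in the hypothesis is $b > \sqrt{2r-1}$, matching exactly what is needed to place $f$ and $g$ into the hypotheses of \autoref{lem:maxgro}. Everything else is bookkeeping, and the conjugator count is the naive upper bound on freely reduced words of a fixed length, requiring no use of the fact that $vwv^{-1}$ must be reduced.
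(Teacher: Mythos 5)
Your proof is correct and follows essentially the same route as the paper's: decompose each $u \in \bar S$ as $v w v^{-1}$ with $w$ the cyclic reduction, bound the count of conjugators of length $k$ by $O\bigl((2r-1)^k\bigr) = O\bigl(\sqrt{2r-1}^{\,2k}\bigr)$, and feed the resulting convolution into \autoref{lem:maxgro}, with the lower bound coming for free from $S \subseteq \bar S$. The only (cosmetic) difference is that the paper notes the conjugator count is actually $\Theta$ of that naive bound, whereas you correctly observe only the upper bound is needed.
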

\begin{proof}
    Let us consider words in $\bar S$ of length $\ell$ and cyclically reduced length $\ell'$.
    These words are of the form $vwv^{-1}$ where $w$ is in $S$ and there is no cancellation between $v$ and $w$ or between $w$ and $v^{-1}$.
    It follows that $|w|=\ell'$ and $|v|=(\ell-\ell')/2$.
    From our hypothesis, the growth of words of length $\ell'$ in $S$ is some function $g$.
    $v$ can be any word of length $(\ell-\ell')/2$ so long as its last letter does not cancel with the first letter of $w$ or that of $w^{-1}$.
    The growth of such words is some function $f(\ell-\ell') \in \Theta\left((2r-1)^{(\ell-\ell')/2}\right)=\Theta\left(\sqrt{2r-1}^{\ell-\ell'}\right)$.
    Thus the growth of words of this form is $g(\ell')f(\ell-\ell')$. 
    The set of all words in $\bar S$ of length $\ell$ is then the sum for every $\ell'$ between 0 and $\ell$.
    Thus by \autoref{lem:maxgro} this is $\Theta(g)$.
\end{proof}

This corollary will be applicable to all of the sets addressed in the paper.
We are lucky enough that our growth is always significantly higher than that of a conjugacy class.
However, an analogous argument can be applied to show the following as well.

\begin{corollary}
    Let $S$ be a subset of $F_r$ that contains only cyclically reduced words.
    Let $\bar S$ be the set of words whose cyclic reductions are in $S$.
    If the growth of words of length $\ell$ in $S$ is $g \in O\left(b^\ell\right)$ for some $b< \sqrt{2r-1}$, then the growth of words of length $\ell$ in $\bar S$ is $\Theta\left(\sqrt{2r-1}^\ell\right)$.
\end{corollary}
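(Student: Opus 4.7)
My plan is to mirror the proof of \autoref{cor:CRisenough}, only swapping the roles played by the two growth rates in \autoref{lem:maxgro}. In that earlier corollary the internal growth of $S$ was the faster function and the conjugating factor the slower; here the hypothesis $b<\sqrt{2r-1}$ reverses the picture, so it is now the conjugation that dominates.

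The setup will be identical. Each word in $\bar S$ of length $\ell$ and cyclically reduced length $\ell'$ admits a unique factorization $vwv^{-1}$ with $w\in S$, $|w|=\ell'$, $|v|=(\ell-\ell')/2$, and no cancellation between $v$ and $w$ or between $w$ and $v^{-1}$. Counting the admissible prefixes $v$ yields a function $f(\ell-\ell')\in\Theta(\sqrt{2r-1}^{\ell-\ell'})$ that is independent of $w$, so the growth of $\bar S$ at length $\ell$ is once again the convolution $\sum_{\ell'=0}^\ell g(\ell')\,f(\ell-\ell')$.

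I would then invoke \autoref{lem:maxgro} with its slow function taken to be our $g\in O(b^\ell)$ and its fast function taken to be our $f\in\Omega(\sqrt{2r-1}^\ell)$; the strict inequality $b<\sqrt{2r-1}$ supplies the required gap between the two bases, and the lemma places the convolution in $\Theta$ of the fast function, which is precisely $\Theta(\sqrt{2r-1}^\ell)$. The matching lower bound that \autoref{lem:maxgro} dismisses as ``trivial'' comes here from the observation that fixing any single element $w_0\in S$ of length $\ell_0$ already contributes $f(\ell-\ell_0)\in\Omega(\sqrt{2r-1}^\ell)$ conjugates $vw_0v^{-1}$ to $\bar S$, provided only that $S$ is nonempty. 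The entire argument is routine once the role reversal in \autoref{lem:maxgro} is spotted; that conceptual swap is really the only point worth flagging as the main obstacle.
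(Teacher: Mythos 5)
Your proposal is correct and is exactly the ``analogous argument'' the paper alludes to but does not write out: the same $vwv^{-1}$ decomposition and convolution, with the roles of the two functions in \autoref{lem:maxgro} swapped, and the lower bound supplied directly by the conjugates of any fixed $w_0\in S$. The only thing worth adding is the same caveat the paper itself glosses over in \autoref{cor:CRisenough}: the conjugator count $f(\ell-\ell')$ vanishes when $\ell-\ell'$ is odd, so the ``$\Theta(\sqrt{2r-1}^{\ell})$'' conclusion (and your fixed-$w_0$ lower bound) should really be read up to the parity of lengths realized by $S$.
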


Cyclically reduced primitive words in $F_2$ grow as a polynomial, and so the total set grows at a rate of $\sqrt{3}$, the same rate as a conjugacy class.

\subsection{Potential positivity}

We will need a few facts about potential positivity, especially in $F_2$. First, we present a basic but very useful fact about potential positivity in free groups of any rank.

\begin{proposition}\label{prop:allbutone}
    Suppose a word $w$ in $F_r=F(x_1,\dots,x_r)$, $r\geq 2$, has $r-1$ generators that only occur with positive exponents. Then $w$ is potentially positive.
\end{proposition}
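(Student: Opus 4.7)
My plan is to construct an explicit automorphism $\psi \in \Aut{F_r}$ sending $w$ to a positive word, built as the composition of an inner automorphism with a large power of a Nielsen-type automorphism. After relabeling, I will assume that $x_1, \ldots, x_{r-1}$ occur only positively in $w$, so that $x_r$ is the only generator possibly appearing with a negative exponent. The trivial case $w \in \langle x_r \rangle$ is settled by applying $x_r \mapsto x_r^{-1}$ when needed, so I may assume $w$ contains at least one occurrence of some $x_j$ with $j < r$.

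The key tool is $\phi^m$, where $\phi$ is the automorphism of $F_r$ defined by $\phi(x_j) = x_j x_r$ for $j < r$ and $\phi(x_r) = x_r$, and $m$ is a positive integer chosen sufficiently large. As a composition of Nielsen transformations, $\phi$ is an automorphism, and hence so is $\phi^m$ (which is simply $x_j \mapsto x_j x_r^m$ for $j < r$). Writing $w$ in its free-product normal form relative to the splitting $F_r = F(x_1,\ldots,x_{r-1}) \ast \langle x_r \rangle$ as
\[
w = v_0 \, x_r^{n_1} \, v_1 \, x_r^{n_2} \, v_2 \cdots x_r^{n_k} \, v_k,
\]
with the $v_i$ positive words in $x_1,\ldots,x_{r-1}$, I expect $\phi^m$ to replace each nonempty $v_i$ by a positive word ending in $x_r^m$. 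The crucial observation is then that at each juncture the trailing $x_r^m$ arising from $\phi^m(v_{i-1})$ merges with the adjacent $x_r^{n_i}$ to produce $x_r^{m+n_i}$; for $m \geq \max_i |n_i|$, every such merged exponent is nonnegative, and $\phi^m(w)$ is a positive word.

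The one complication is that this argument requires every $v_{i-1}$ adjacent to an $x_r^{n_i}$ block to be nonempty --- in particular $v_0 \neq 1$. To arrange this, I would first replace $w$ by a cyclic conjugate: cyclically reduce $w$, and then cyclically rotate so that its first letter is some $x_j$ with $j < r$. Both operations are inner automorphisms, and the rotation is possible precisely because $w$ has at least one non-$x_r$ letter. I expect the main subtlety of the proof to be identifying the correct automorphism $\phi$ and handling this initial-boundary issue at $v_0$; once the cyclic rotation is in place, the verification that $\phi^m(w)$ is positive reduces to a careful accounting of cancellations.
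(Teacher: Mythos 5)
Your proposal is correct and uses the same key automorphism as the paper: apply $x_j \mapsto x_j x_r^m$ simultaneously for all $j < r$, with $m$ chosen at least as large as every $|n_i|$, so that each $x_r$-syllable merges with the trailing $x_r^m$ to its left and becomes nonnegative. The paper sidesteps the boundary issue at $v_0$ implicitly (treating the word cyclically), whereas you handle it explicitly via an inner automorphism; both routes are sound and essentially identical in substance.
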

\begin{proof}
    Suppose without loss of generality that every generator except $x_r$ only occurs with positive exponents. Then every syllable of $x_r^m$ where $m$ is negative occurs after a syllable of another generator to a positive power. Let $k$ be the largest negative exponent in absolute value that appears on $x_r$ in $w$. Since $x_1,\dots,x_{r-1}$ only appear with positive exponents, applying the automorphism $x_i\mapsto x_ix_r^{-k}$ for each $i\in\{1,2,\dots, r-1\}$ sends $w$ to a positive word.
\end{proof}

We will make extensive use of the following theorem of Goldstein \cite{Goldstein} (for an alternate proof, see \cite{KHOO}).
\begin{theorem}[Goldstein's Criterion]\label{gold}
Let $w$ be cyclically reduced and potentially positive.
If we consider $w$ as a cyclic word,  there exists $x,y\in\left\{\Ar,\ar,\Bb,\bb \right\}$ such that every occurrence of $x$ is preceded and followed by $y$.
\end{theorem}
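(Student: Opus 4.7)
The plan is to prove the criterion by induction on the length of a shortest sequence of Nielsen automorphisms taking $w$ to a cyclically reduced positive word in $F_2$. Such a sequence exists because $w$ is potentially positive and $\Aut{F_2}$ is finitely generated by the inversions of generators, the transposition $\ar \leftrightarrow \bb$, and the transvections $\ar \mapsto \ar\bb^{\pm 1}$, $\bb \mapsto \bb\ar^{\pm 1}$.

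For the base case, when $w$ itself is a cyclically reduced positive word, neither $\Ar$ nor $\Bb$ appears in $w$. Choosing $x = \Ar$ and $y$ arbitrary, the condition that every occurrence of $x$ is preceded and followed by $y$ is vacuously satisfied, so $w$ meets the criterion.

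For the inductive step, let $\phi_1, \dots, \phi_k$ be a shortest such sequence and set $w' = \phi_1(w)$, so that the tail $\phi_2, \dots, \phi_k$ witnesses potential positivity of $w'$ in $k-1$ steps. By the inductive hypothesis, some letters $x', y'$ witness the criterion for $w'$, and the task is to transfer these witnesses back to $w$ via $\phi_1^{-1}$. For inversions and the transposition this is immediate, as they simply relabel letters. The substantive case is a transvection; for example if $\phi_1\colon \ar \mapsto \ar\bb$ then $\phi_1^{-1}\colon \ar \mapsto \ar\Bb$, and one must analyze how this substitution reshapes the cyclic adjacency pattern of $w'$ after free and cyclic reduction. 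Each $\ar$ in $w'$ expands to $\ar\Bb$, and the $\Bb$ may cancel with an adjacent $\bb$, so the cyclic neighbors of surviving letters change in a controlled way.

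The principal obstacle is the case analysis for transvections: occurrences of $x'$ in $w'$ need not survive in $w$, and new letters (such as the $\Bb$ introduced by the substitution) appear with neighbors dictated by the substitution rule. The combinatorics remains tractable because $F_2$ has only four letters, so the Whitehead graph of the cyclic word has only four vertices; one can verify directly that every $\Bb$ newly introduced by $\phi_1^{-1}$ is flanked on both sides by $\ar$ unless it cancels against a neighboring $\bb$, and in either case a suitable witness pair $(x, y)$ for Goldstein's criterion emerges for $w$, completing the induction.
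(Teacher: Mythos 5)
The paper does not prove this result itself; Goldstein's Criterion is cited from \cite{Goldstein} (with an alternate proof credited to \cite{KHOO}), so there is no in-paper argument against which to compare your proposal. What I can evaluate is whether your sketch closes, and it does not: the inductive step, which is where the entire content of the theorem lives, is asserted rather than proved, and the assertion as stated is in fact false.

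Concretely, take $\phi_1^{-1}\colon\ar\mapsto\ar\Bb$. You claim every newly introduced $\Bb$ is flanked by $\ar$ unless it cancels against an adjacent $\bb$. But if $\ar$ is immediately followed in $w'$ by $\Bb$ (which is perfectly compatible with the inductive hypothesis, e.g.\ if $w'$ satisfies the criterion with $x'=\Bb$, $y'=\ar$, so $\ar\Bb\ar$ occurs), then $\ar\Bb\mapsto\ar\Bb\,\Bb$: the new $\Bb$ sits next to an old $\Bb$, no cancellation occurs, and it is not flanked by $\ar$. So the dichotomy ``flanked by $\ar$ or cancelled'' fails, and the subsequent claim that ``a suitable witness pair emerges'' is left entirely unsupported. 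This is not a detail one can verify ``directly''; the case analysis of how a single transvection interacts with the criterion is essentially the content of \autoref{lemma:switch}, and your argument needs the inverse direction of that analysis, which is not symmetric and is not established anywhere in your sketch.

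There is a second structural gap. Your induction is on the length of a shortest Nielsen sequence, but $w'=\phi_1(w)$ need not be cyclically reduced even when $w$ is, so the inductive hypothesis (which concerns cyclic words) does not apply to $w'$ as written. Passing to the cyclic reduction of $w'$ is a conjugation, and rewriting that conjugation as Nielsen moves can lengthen the chain, destroying the minimality on which the induction rests. You would need a different length function (e.g.\ peak-reduction machinery, which is how \cite{Goldstein} actually proceeds) or a careful argument that cyclic reduction does not interfere. As it stands, neither the well-foundedness of the induction nor the transfer of the witness pair has been demonstrated, so the proof does not go through.
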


We will also take the following related technical lemma from \cite{KHOO}.

\begin{lemma}\label{lemma:switch}
    If $w\in F_2$ is potentially positive and satisfies \goldC\ with $x=\Bb$ and $y=\ar$, applying $\bb \mapsto\bb \ar$ to $w$ sends it to a word satisfying \goldC\ either with $x=\Bb$ and $y=\ar$ or with $x=\Ar$ and $y=\bb$. By symmetry, if $w$ is potentially positive and satisfies \goldC\ with $x=\Ar$ and $y=\bb$, applying $\ar \mapsto \ar\bb$ to $w$ sends it to a word satisfying \goldC\ either with $x=\Bb$ and $y=\ar$ or with $x=\Ar$ and $y=\bb$.
\end{lemma}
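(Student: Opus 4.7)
The plan is to compute $\phi(w)$ explicitly, where $\phi$ denotes the automorphism $\bb \mapsto \bb\ar$, and split into cases based on whether $w$ contains the cyclic substring $\Bb\ar\Bb$. The second half of the lemma follows symmetrically.

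First I would record the local effect of $\phi$: since $\phi(\bb) = \bb\ar$ and $\phi(\Bb) = \Ar\Bb$, the only possible cancellations in $\phi(w)$ arise from $\ar\Bb$ or $\bb\Ar$ occurring as cyclic substrings of $w$. Because the hypothesis forces every $\Bb$ in $w$ to be preceded by $\ar$, the leading $\Ar$ of every $\phi(\Bb)$ is cancelled, so each $\Bb$ in $w$ contributes exactly one $\Bb$ to $\phi(w)$ and consumes its preceding $\ar$. The hypothesis also constrains the syllable graph of $w$: every maximal $\Ar^k$-syllable of $w$ is flanked by $\bb$-syllables (since $\Ar$ cannot be adjacent to $\ar$ by reduction, nor to $\Bb$ by hypothesis), and $\phi$ sends such a block $\bb\Ar^k\bb$ to $\bb\Ar^{k-1}\bb\ar$.

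\textbf{Case 1.} $w$ contains no cyclic substring $\Bb\ar\Bb$. Then for each $\Bb$ at position $i$ in $w$, neither $w_{i-2}$ nor $w_{i+2}$ equals $\Bb$, so the $\Bb$ in $\phi(w)$ is preceded and followed by $\ar$. Hence \goldC\ holds for $\phi(w)$ with $(x,y) = (\Bb, \ar)$.

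\textbf{Case 2.} $w$ contains $\Bb\ar\Bb$ cyclically. Then $\phi(w)$ contains $\Bb\Bb$, so \goldC\ with $(\Bb, \ar)$ fails; I claim it holds instead with $(\Ar, \bb)$. If $w$ contains no $\Ar$ this is vacuous; otherwise, the claim reduces to showing that every $\Ar$-syllable of $w$ has length at most $2$, because a syllable of length $k$ contributes an $\Ar^{k-1}$-run in $\phi(w)$ flanked by $\bb$'s. I would prove this by contradiction: if some $\Ar^k$ of $w$ has $k \geq 3$, then $\phi(w)$ contains both $\Bb\Bb$ and $\Ar^{k-1}$ with $k - 1 \geq 2$, and I argue that no \goldC\ pair then holds for $\phi(w)$, contradicting the fact that $\phi(w)$ is potentially positive. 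The cases $x \in \{\Bb, \Ar\}$ fail immediately: $\Bb\Bb$ (resp.\ $\Ar\Ar$) forces $y$ to equal $x$, so $\phi(w) = x^n$, contradicting the presence of the other letter. The main technical step is ruling out $x \in \{\bb, \ar\}$; for this I would use a syllable-graph analysis of $w$, noting that the allowed transitions under \goldC\ with $(\Bb, \ar)$ decompose into an ``$\{\ar, \Bb\}$-world'' and a ``$\{\bb, \Ar\}$-world'' that are connected only through the transitions $\ar \to \bb$ and $\bb \to \ar$. Since $w$ cyclically contains syllables from both worlds (it has both $\Bb$ and $\Ar$), it must use a $\bb \to \ar$ transition, and this produces $\ar$-runs of length $\geq 2$ in $\phi(w)$ together with $\bb$'s whose followers differ (some $\ar$, some $\Ar$), so no single $y$ can surround every $\ar$ or every $\bb$. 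Formalizing this connectivity argument is the main obstacle; once it is in place, the remaining case-check is routine.
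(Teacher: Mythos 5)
The paper does not actually prove this lemma; it states it with an attribution to \cite{KHOO}, so there is no in-paper argument to compare against. Judged on its own, your structure is sound: the split on whether $w$ cyclically contains $\Bb\ar\Bb$ is the right dichotomy, your preliminary observations about $\varphi$ (no new $\Ar$ is ever created because the $\Ar$ from $\varphi(\Bb)$ always cancels against the mandatory preceding $\ar$; each $\bb\Ar^k\bb$ becomes $\bb\Ar^{k-1}\bb\ar$) are correct, and Case~2 does reduce exactly to the claim that every $\Ar$-syllable of $w$ has length at most $2$.

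The ``connectivity'' step you flag as the main obstacle does close, and a little more cheaply than your sketch suggests. Use the following general fact: if a cyclically reduced cyclic word satisfies \goldC\ with a pair $(x,y)$ and contains $xx$, then $y=x$, which forces the whole cyclic word to be a power of $x$. In your contradiction setup $\varphi(w)$ contains both $\Bb\Bb$ and $\Ar\Ar$; since $\Ar$ and $\Bb$ each occur, this fact immediately rules out $x=\Bb$ and $x=\Ar$. Because $w$ contains both a $\Bb$ and an $\Ar$, some $\{\bb,\Ar\}$-block of $w$ ends in $\bb$ and is followed by $\ar$ (it cannot be followed by $\Bb$), so $\varphi(\bb\ar)=\bb\ar\ar$ puts $\ar\ar$ into $\varphi(w)$; since $\Bb$ also occurs, the same fact rules out $x=\ar$. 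For $x=\bb$, note that $\varphi(w)$ contains a $\bb$ followed by $\Ar$ (from $\bb\Ar^{k-1}\bb\ar$ with $k-1\geq 2$) and also a $\bb$ followed by $\ar$ (the last $\bb$ of any $\{\bb,\Ar\}$-block), so no single $y$ can work. With all four values of $x$ excluded, $\varphi(w)$ fails \goldC, contradicting potential positivity. Two items worth making explicit in a full write-up: (i) $\varphi$ acting on the cyclic word produces only local, non-chaining cancellations, so $\varphi(w)$ is again a cyclically reduced cyclic word and \goldC\ legitimately applies to it; and (ii) in Case~1, the surviving $\ar$ preceding each $\Bb$ in $\varphi(w)$ is the terminal letter of $\varphi(w_{i-2})$, and $w_{i-2}\in\{\ar,\bb\}$ in that case, both of whose $\varphi$-images end in $\ar$, so the claim ``preceded and followed by $\ar$'' holds as you assert.
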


Finally, for brevity, we will use the shorthand $\PP r$ to represent the set of cyclically-reduced potentially positive words in in $F_r$.

\section{Automata and matrices}
\label{ss:automata}

\subsection{Automata and matrices}

Our method for creating lower bounds will be to provide automata which generate potentially positive words (i.e.\ regular languages of potentially positive words) and calculate their growth from the automaton.
We will consider an automaton $A$ to be a directed graph whose nodes are labeled with letters in the free group (that is the generators and their inverses).
We will call this labeling function $\eta_A$.
$\eta_A$ induces a mapping from paths in the digraph to words in the free group.
A \textbf{path} here is defined to be a sequence (finite unless specified) of nodes such that for any two consecutive nodes $x$ and $y$, there is an edge from $x$ to $y$.
Considering the path as a sequence of nodes, we get a word by applying $\eta_A$ to each node in the sequence.

For this reason we will call an automaton \textbf{reduced} when no edge of the digraph connects a node to an inversely labeled node.
If an automaton is reduced then the image of any path is a reduced word.

We will in particular be interested in closed paths and their images.
We say a path from a node $x$ to a node $y$ is \textbf{closed} if there exists an edge from $y$ to $x$.
We will consider the empty path, that is the path with no nodes, to be closed by vacuity.
Closed paths in reduced automata must then lift to cyclically reduced words, since we require that there exists an edge between the last and first letters.

We will also define a \defnsb{mixing\footnote{We borrow this terminology from symbolic dynamics. See \cite{dynamicsbook}*{Def.\ 4.5.9} for a comparison.} automaton}.
\begin{definition}
We say a digraph (or by extension an automaton) is \defnsb{mixing} if there exists some $N$ such that for every ordered pair of nodes $x$ and $y$, and for every $n\geq N$, there exists a path from $x$ to $y$ of length $n$.
\end{definition}

These automata will be useful to us because of the following theorem which allows us to count closed paths:

\begin{lemma}\label{thm:irreducible eigenval}
  Let $G$ be a mixing sofic shift whose adjacency matrix has largest eigenvalue $\lambda$.
  The number of closed paths of length $n$ in $G$ is $\Theta(\lambda^n)$
\end{lemma}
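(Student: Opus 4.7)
The plan is to encode the closed-path count as a matrix trace and then apply the Perron--Frobenius theorem to the adjacency matrix $A$ of the underlying digraph of $G$.

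First, I would identify the number of closed paths of length $n$ with $\operatorname{tr}(A^n)$: the number of paths of length $k$ from a node $x$ to a node $y$ equals the $(x,y)$ entry of the corresponding power of $A$, and a closed path additionally requires an edge $y \to x$, so summing over all $x,y$ yields $\sum_{x,y} A_{yx}(A^{n-1})_{xy} = \operatorname{tr}(A^n)$. The precise indexing convention (counting nodes versus edges) is cosmetic for the $\Theta$ asymptotic.

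Second, I would verify that the mixing hypothesis on $G$ is exactly the classical condition of \emph{primitivity} of $A$: by assumption there is some $N$ such that $(A^n)_{xy} > 0$ for every pair of nodes $x,y$ and every $n \geq N$, which is the textbook definition of primitivity for a nonnegative integer matrix.

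Third, I would apply the Perron--Frobenius theorem for primitive matrices, which gives that $\lambda$ is a simple real positive eigenvalue with $|\mu| < \lambda$ for every other eigenvalue $\mu$. Writing $\operatorname{tr}(A^n) = \sum_i \lambda_i^n$ over the spectrum counted with multiplicity (using a Jordan-block estimate if $A$ is not diagonalizable), the upper bound $\operatorname{tr}(A^n) \leq (\dim A)\,\lambda^n$ is immediate from the spectral radius, and the lower bound follows because the single $\lambda^n$ term strictly dominates the remaining terms, which are at worst $p(n)\,(\lambda')^n$ for some polynomial $p$ and some $\lambda' < \lambda$. This yields $\operatorname{tr}(A^n) = \Theta(\lambda^n)$.

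The main obstacle is really just the translation from the paper's ``mixing'' hypothesis into primitivity of $A$, and the careful invocation of Perron--Frobenius; both are standard in symbolic dynamics (compare \cite{dynamicsbook}), but should be stated precisely in order to justify ignoring lower-order eigenvalue contributions in the asymptotic.
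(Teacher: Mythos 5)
Your proposal is correct but takes a genuinely different route from the paper. You pass directly to the adjacency matrix $A$ of the presenting digraph, translate the paper's mixing condition into primitivity of $A$, express the closed-path count as $\operatorname{tr}(A^n)$, and read off the $\Theta(\lambda^n)$ asymptotic from Perron--Frobenius for primitive matrices. The paper instead routes through sofic-shift machinery: it passes to a right-resolving presentation $\mathcal{G}=(G',L)$, bounds the number of length-$n$ words of the shift between the path count $|L_n(G')|$ and that count divided by the number of states, cites $C\lambda^n \le |L_n| \le C'\lambda^n$ for irreducible graphs, and finally uses mixing to squeeze the closed-path count between $|L_{n-2N}|$ and $|L_n|$ by prepending and appending connecting paths of length $N$. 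Your approach is the more elementary of the two and arguably the one that better matches the claim: the lemma counts \emph{closed paths in the digraph}, not distinct cyclic \emph{words}, so the right-resolving detour --- whose whole purpose is to control path-to-word multiplicity --- is not actually needed. Two small refinements you could make when writing this up: the trace identity $\operatorname{tr}(A^n) = \sum_i \lambda_i^n$ over eigenvalues with algebraic multiplicity is exact even when $A$ is non-diagonalizable (a Jordan block of size $m$ with eigenvalue $\mu$ contributes exactly $m\mu^n$ to the diagonal), so no polynomial factor $p(n)$ is needed in the remainder estimate; and the lower bound is cleanest stated as $\operatorname{tr}(A^n) \geq \lambda^n - (d-1)(\lambda')^n$ with $d = \dim A$ and $\lambda' = \max_{i\ne 1}|\lambda_i| < \lambda$, which exceeds $\tfrac{1}{2}\lambda^n$ once $n$ is large enough.
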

\begin{proof}
  An irreducible sofic shift has a right resolving presentation $\mathcal{G}=(G',L)$ (\cite{dynamicsbook}*{Thm. 3.3.11}) and for a right resolving graph the number of paths $|L_n|$ of length $n$ satisfies $\frac{1}{k}|L_n(G')|\leq |L_n(\mathcal{G})|\leq |L_n(G')|$ (\cite{dynamicsbook}*{Thm. 4.1.13}), where $k$ is the number of states in $G'$. An irreducible graph has $C \lambda^n\leq |L_n|\leq C'\lambda^n$.
  The number of closed paths $P_{n}$ of length $n$ has $|P_{n}|\leq |L_n|\leq C'\lambda^n$.
  Since $G$ is mixing there is an $N$ so that every pair of nodes $v,v'$ has a path from $v$ to $v'$ of length $N$.
  Therefore there is an embedding of $L_n$ into $P_{n+2N,w}$ (the closed paths about the node w) by adding a path of length $N$ from $w$ to the word and a path of length $N$ from the word to $w$.
  Therefore $C \lambda^{-2N}\lambda^n \leq |L_{n-2N}|\leq |P_{n,w}|\leq |P_n|$ for all $n\geq 2N$.
\end{proof}
In practice nearly everything discussed here will be mixing, however for completeness we can give the following corollary:

\begin{corollary}\label{cor:eigenval}
  Let $G$ be the sum of finitely many mixing digraphs.
  The number of closed paths of length $n$ in $G$ is $\Theta(\lambda^n)$, where $\lambda$ is the largest eigenvalue of the adjacency matrix of $G$.
\end{corollary}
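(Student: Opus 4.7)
The plan is to reduce the statement immediately to Lemma~\ref{thm:irreducible eigenval} applied componentwise. I read ``sum'' here as disjoint union, so write $G = G_1 \sqcup \cdots \sqcup G_k$ with each $G_i$ mixing. The adjacency matrix of $G$ is then block diagonal with the adjacency matrices of $G_1,\dots,G_k$ on the diagonal, so the spectrum of $G$ is the union of the spectra of the $G_i$, and in particular $\lambda = \max_i \lambda_i$, where $\lambda_i$ denotes the largest eigenvalue of $G_i$.

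Next, since no edge of $G$ crosses between two different components, every closed path in $G$ lies entirely in some $G_i$. Letting $P_n(H)$ denote the set of closed paths of length $n$ in a digraph $H$, we therefore get $|P_n(G)| = \sum_{i=1}^k |P_n(G_i)|$. By Lemma~\ref{thm:irreducible eigenval}, each $|P_n(G_i)|$ lies in $\Theta(\lambda_i^n)$.

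Finally, I need to observe that a finite sum $\sum_i \Theta(\lambda_i^n)$ collapses to $\Theta(\lambda^n)$ when $\lambda = \max_i \lambda_i$. For the upper bound, each summand is in $O(\lambda_i^n) \subseteq O(\lambda^n)$ since $\lambda_i \leq \lambda$, and a finite sum of $O(\lambda^n)$ functions is again $O(\lambda^n)$. For the lower bound, pick an index $j$ with $\lambda_j = \lambda$; then $|P_n(G)| \geq |P_n(G_j)| \in \Omega(\lambda^n)$. I do not anticipate any real obstacle here: the argument is essentially bookkeeping on top of Lemma~\ref{thm:irreducible eigenval}, and the only subtlety is recording that disjointness of the components forces each closed path to live entirely in exactly one of them, so that the counts add rather than interacting in any more complicated way.
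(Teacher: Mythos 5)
Your proposal is correct and follows essentially the same route as the paper's proof: both decompose the block-diagonal adjacency matrix, apply Lemma~\ref{thm:irreducible eigenval} componentwise, bound the finite sum above by $O(\lambda^n)$, and get the lower bound from the component achieving the maximum eigenvalue. The only difference is that you make explicit the observation that closed paths cannot cross between components, which the paper leaves implicit.
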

\begin{proof}
  The adjacency matrix of the sum of digraphs is a block matrix with blocks consisting of each component of the sum.
  $\lambda$ is then the maximum eigenvalue of the blocks.
  It follows then that if $G$ has $k$ components the number of closed paths is $O(k\lambda^n)=O(\lambda^n)$.
  
  Since there is also a component which has largest eigenvalue $\lambda$, the paths in that block grow $\Theta(\lambda^n)$.
  Thus the number of closed paths in $G$ is $\Omega(\lambda^n)$.
\end{proof}

Our strategy then is to use \autoref{cor:eigenval} to calculate the number of closed paths in the digraph.
If our labeling map is one-to-one, then this gives us the $\Theta$-class for the set of (cyclically reduced) words produced by the machine.
However, injectivity is going to be too strong of a requirement for what we want to do.

\begin{definition}\label{dfn:one-to-const}
  The node labeling of an automaton is \defnsb{one-to-constant} if there exists a constant $c$ such that for any cyclically reduced word $w$ there are at most $c$ closed paths whose image is $w$.
\end{definition}

To make it easier to determine if an automaton's labeling is one-to-constant we will give the following helpful lemma:

\begin{lemma}\label{lemma:one-to-poly-cond}
  The node labeling of a mixing automaton is one-to-constant iff no distinct paths have the same start-point, end-point, and image.
\end{lemma}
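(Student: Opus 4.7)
The plan is to prove the two implications separately. The ``if'' direction is a bookkeeping argument, while the ``only if'' direction requires a combinatorial pumping construction that exploits mixing.

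For the easy direction, assume no two distinct paths share start-point, end-point, and image. Fix any cyclically reduced word $w$. A closed path with image $w$ is determined by its start vertex $x$ together with its end vertex $y$ (which must admit an edge $y\to x$), because by hypothesis at most one path $x\to y$ can have image $w$. The total number of such closed paths is therefore bounded by the number of edges of the automaton, a fixed constant $c$ independent of $w$, giving one-to-constant.

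For the reverse direction, argue by contrapositive: suppose distinct paths $p_1\neq p_2$ both go from $x$ to $y$ with the same image $w$. Using mixing, choose a path $\pi$ from $y$ back to $x$, and a path $\sigma$ from $x$ to some in-neighbor $z$ of $x$ (so that $z\to x$ is an edge). For each $\mathbf{i}=(i_1,\dots,i_k)\in\{1,2\}^k$, form the concatenation
\[
  P_{\mathbf{i}} \;=\; p_{i_1}\,\pi\,p_{i_2}\,\pi\,\cdots\,p_{i_k}\,\pi\,\sigma.
\]
Each $P_{\mathbf{i}}$ starts at $x$, ends at $z$, and is closed via the edge $z\to x$. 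All $2^k$ of these paths share a common image, since replacing any $p_{i_j}$ by its sibling does not alter the sequence of node labels, and they are pairwise distinct as node sequences because $p_1$ and $p_2$ differ as node sequences. Choosing $k$ with $2^k > c$ gives more than $c$ closed paths with a common image, contradicting one-to-constant.

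The main obstacle is verifying that the shared image of the $P_{\mathbf{i}}$ is cyclically reduced so that the one-to-constant bound actually applies to it. This is automatic in the paper's setting, where the automata under consideration are reduced and closed paths therefore automatically have cyclically reduced images. Mixing is only used mildly, to produce the connector $\pi$ (from $y$ back to $x$) and the closing tail $\sigma$ (to an in-neighbor $z$ of $x$); both exist because in a mixing digraph every vertex has incoming edges and every ordered pair of vertices is connected by paths of all sufficiently large lengths. Once these connectors are in place, the exponential count $2^k$ against a fixed image breaks any one-to-constant bound.
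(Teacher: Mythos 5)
Your proof is correct and follows essentially the same route as the paper's: a pumping argument exploiting mixing for the forward (contrapositive) direction, and a structural constant bound for the converse. Your write-up is in fact slightly more careful on both points --- you explicitly route the final leg $\sigma$ to an in-neighbor $z$ of $x$ to guarantee the pumped path is actually closed (the paper's $p_0\circ q$ returns to $x$ without checking for an edge $x\to x$), and in the converse direction you bound by the number of edges via the clean observation that a closed path with a given image is determined by its two endpoints, whereas the paper instead reasons that two closed paths with the same image must be pairwise disjoint and bounds by the number of nodes.
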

\begin{proof}
  Suppose we have two distinct paths, $p_0$ and $p_1$, from $x$ to $y$ with $\eta_A(p_0)=\eta_A(p_1)$.
  Now let us have an arbitrary path $q$ from $y$ to $x$.
  Since the automaton is mixing $q$ must exist.
  Let $p_0\circ q$ denote the path formed by following $p_0$ and then $q$.
  We have that $\eta_A(p_0\circ q) = \eta_A(p_1\circ q)$.
  Let us say $w = \eta_A(p_0\circ q)$.
  Since there are at least two ways to get from $x$ to itself with an image of $w$ there are at least $2^n$ ways to get from $x$ to itself with an image of $w^n$.
  Since the length $w^n$ grows linearly on $n$ and $2^n$ grows exponentially on $n$, there is no polynomial bound on the number of duplicate words produced, let alone a constant bound.

  Now let us suppose that no such pair of paths exist.
  For an arbitrary pair of closed paths which map to the same word, we can show they must be completely disjoint.
  If they were not disjoint they would share some node $x$.
  Then we could construct distinct paths from $x$ to itself which map to the same word by following our two closed paths starting from $x$.
  Since any collection of duplicate words must be induced by a set of pairwise disjoint closed paths, the total number of duplicate words is bounded above by the number of nodes, a constant.
\end{proof}

The following:

\begin{lemma}
  Let $A$ be a mixing automaton whose labeling is one-to-constant.
  Let $p(n)$ be the number of closed paths of length $n$ in $A$.
  Let $w(n)$ be the number of cyclic words of length $n$ produced by $A$.
  $w\in \Theta(p)$.
\end{lemma}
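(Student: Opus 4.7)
The plan is to sandwich $p(n)$ and $w(n)$ between constant multiples of each other, obtaining $w\in\Theta(p)$ as a direct consequence of \autoref{dfn:one-to-const}. The key observation is that the one-to-constant hypothesis is already an inequality of exactly this sort, just phrased in terms of individual preimage-counts.

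For the upper bound $w\in O(p)$, I would argue that each cyclic word of length $n$ produced by $A$ is, by the definition of ``produced'', the image of at least one closed path of length $n$. Choosing one preimage per produced word defines an injection from the set counted by $w(n)$ into the set counted by $p(n)$, so $w(n)\leq p(n)$ holds for every $n$.

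For the lower bound $w\in\Omega(p)$, I would partition the closed paths of length $n$ according to their image. By \autoref{dfn:one-to-const} there exists a constant $c$, independent of $n$, such that each cyclically reduced word of length $n$ has at most $c$ closed paths mapping to it, so
\[
p(n)\;=\;\sum_{u}\#\bigl\{\text{closed paths of length }n\text{ with image }u\bigr\}\;\leq\;c\cdot w(n),
\]
where the sum ranges over the $w(n)$ cyclically reduced words produced by $A$ at length $n$. Combining the two directions yields $w(n)\leq p(n)\leq c\cdot w(n)$, which is exactly $w\in\Theta(p)$.

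No step is a real obstacle; the argument is essentially a bookkeeping application of the hypothesis. The only point worth being careful about is the implicit identification between ``cyclic words of length $n$ produced by $A$'' in the statement and the ``cyclically reduced words'' quantified over in \autoref{dfn:one-to-const}: both must refer to the set of distinct images of closed paths of length $n$, so that the constant $c$ bounds $p(n)/w(n)$ without a further length-dependent factor appearing from, say, counting rotation orbits. The mixing hypothesis plays no explicit role in this inequality; it enters only indirectly, as the ambient assumption that makes the one-to-constant property verifiable via \autoref{lemma:one-to-poly-cond}.
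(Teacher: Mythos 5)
Your proof is correct and takes essentially the same approach as the paper's (which simply states the double inequality $\tfrac{1}{c}p(n)\leq w(n)\leq p(n)$ and concludes); you just unpack that inequality into its two halves. Your closing remark that the mixing hypothesis is not directly used here is also accurate.
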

\begin{proof}
  Since the labeling is one-to-constant and preserves length, $\dfrac{1}{c}p(n)\leq w(n) \leq p(n)$.
  Therefore $w\in \Theta(p)$.
\end{proof}

From here we have everything that is needed to prove our lower bounds. 
We will give several machines, demonstrate they are reduced, mixing, one-to-constant and produce only potentially positive words.
Their eigenvalues then give us lower bounds.

However, we think that this would be a be a bit mystical.
Our techniques for producing the automaton may be useful.
It is possible for a fixed automaton $A$ and automorphism $\varphi$ to construct an automaton which produces exactly the images of the words produced by $A$ under $\varphi$.
Our strategy is then to start from the automaton which produces the positive words and apply automorphisms to increase the largest eigenvalue.
The automata we produce are all the result of limiting an infinite family of automorphisms.

\section{Lower bounds}

\subsection{Rank 2}
\begin{figure}[h]
\caption{A machine which only produces potentially positive words}
\begin{tikzpicture}
\begin{scope}[state,thick]
  \node[draw=ourred](s1){$\bigar$};
  \node[draw=ourblue](s2)[below=of s1]{$\bigBb$};
  \node[draw=ourred](s3)[right=of s2]{$\bigar$};
  \node[draw=ourblue](s4)[above=of s3]{$\bigbb$};
  \node[draw=ourred](s5)[right=of s4]{$\bigAr$};
\end{scope}
\path[->,thick]
  (s1) edge[ourred,loop left] ()
       edge[ourred] (s2)
       edge[ourred,bend left] (s4)
  (s2) edge[ourblue] (s3)
  (s3) edge[ourred,loop right] ()
       edge[ourred,] (s4)
  (s4) edge[ourblue,loop above] ()
       edge[ourblue,bend left] (s1)
       edge[ourblue,bend left] (s5)
  (s5) edge[ourred,loop right] ()
       edge[ourred,bend left] (s4)
  ;
\end{tikzpicture}
\label{machine:lb}
\end{figure}
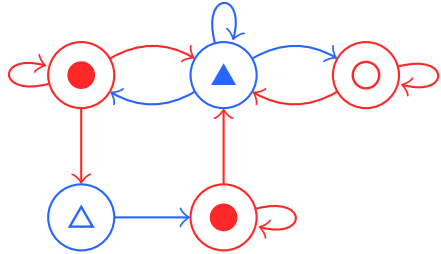

\begin{theorem}\label{thm:lowerboundF2}
  All words generated by the machine in \autoref{machine:lb} are potentially positive. Furthermore, the growth of paths of length $n$ through this machine is $\Theta(\lambda_1^n)$ where $\lambda_1 \approx 2.50506841362147$ is the largest root of the polynomial
  \begin{align*}
    \left(1-\lambda\right) \left(\lambda^4 - 3 \lambda^3 + \lambda^2 + \lambda - 1\right).
  \end{align*}
\end{theorem}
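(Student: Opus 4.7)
The theorem splits into two independent claims: (A) every closed-path image through the machine in \autoref{machine:lb} is potentially positive, and (B) the number of such closed paths of length $n$ is $\Theta(\lambda_1^n)$. My plan is to address them in order.

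For (A), the starting structural observation is that the machine forces $\Bb$ to appear only at $s_2$, whose unique in-edge comes from $s_1 = \ar$ and whose unique out-edge goes to $s_3 = \ar$; thus every $\Bb$ in an image word sits cyclically in the pattern $\ar\Bb\ar$. Similarly, the only $\Ar$-labeled node $s_5$ is flanked on each side by transitions to $s_4 = \bb$, so every maximal run of $\Ar$'s appears as $\bb\Ar^k\bb$. This already shows every image satisfies \goldC\ with $x = \Bb$, $y = \ar$, but since that is only a necessary condition I need to upgrade it. My plan is an explicit Nielsen-reduction algorithm modeled on Goldstein's recognition procedure: applying $\bb \mapsto \bb\ar$ depletes each $\bb\Ar^k\bb$ block (reducing $k$ by one), and applying $\ar \mapsto \ar\bb$ collapses each $\ar\Bb\ar$ sandwich to $\ar^2 \bb$ via the cancellations $\bb\Bb = \Ar\ar = e$. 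Alternating these two moves, and switching between them based on which of the two Goldstein conditions is currently in force (via \autoref{lemma:switch}), should reduce any image word to a positive word. The main subtlety is the monovariant: each $\bb \mapsto \bb\ar$ plants new $\Ar$'s adjacent to the $\Bb$'s (and symmetrically $\ar \mapsto \ar\bb$ plants $\Bb$'s near surviving $\Ar$'s), so a decreasing quantity (for instance a lexicographic pair of word length and total negative-exponent count) must be tracked across the alternation to ensure termination.

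For (B), I plan to apply \autoref{thm:irreducible eigenval}. The machine is strongly connected (e.g., the closed walk $s_1 \to s_2 \to s_3 \to s_4 \to s_5 \to s_4 \to s_1$ visits every node), and every node carries a self-loop, so the greatest common divisor of cycle lengths is $1$ and the digraph is aperiodic, hence mixing. It then remains to compute the largest eigenvalue of the $5 \times 5$ adjacency matrix $A$ of the machine. Cofactor expansion of $\det(\lambda I - A)$ along column $2$, which has only two nonzero entries, reduces the computation to a pair of $4 \times 4$ minors; after simplification the determinant factors as $(\lambda - 1)(\lambda^4 - 3\lambda^3 + \lambda^2 + \lambda - 1)$, matching the polynomial in the statement. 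The largest root of the quartic factor, numerically $\lambda_1 \approx 2.50506841362147$, exceeds $1$ and is therefore the spectral radius, so \autoref{thm:irreducible eigenval} delivers the $\Theta(\lambda_1^n)$ bound on the number of closed paths. The main obstacle will lie in the potential-positivity half; the growth calculation is essentially mechanical once mixing is in hand.
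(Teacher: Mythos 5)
Your part (B) follows the paper's approach exactly: check mixing (strong connectivity plus a self-loop gives aperiodicity), compute the characteristic polynomial of the $5\times 5$ transition matrix, and invoke \autoref{thm:irreducible eigenval} to get $\Theta(\lambda_1^n)$. That portion is fine.

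Part (A), however, has a genuine gap which you yourself flag and then leave open. You propose alternating $\bb\mapsto\bb\ar$ and $\ar\mapsto\ar\bb$ according to which form of \goldC\ holds (via \autoref{lemma:switch}), and then acknowledge that ``a decreasing quantity \dots must be tracked across the alternation to ensure termination.'' Identifying such a monovariant across the alternation is exactly the hard part, and as written your argument does not supply one. Moreover, once you apply $\ar\mapsto\ar\bb$ the resulting word generally leaves the language of the machine, so you lose the structural control ($\ar\Bb\ar$ sandwiches, $\bb\Ar^k\bb$ blocks) that you relied on in the first place; you would need a fresh argument at each stage.

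The paper sidesteps all of this with a cleaner observation: only $\bb\mapsto\bb\ar$ is ever needed. Using your own structural facts together with the further observation that in this machine every $\Bb$ is preceded not merely by $\ar$ but by a run $\bb\ar^n$ (the only way into $s_1$'s loop is from the $\bb$ node $s_4$), one checks that under $\bb\mapsto\bb\ar$ each $\bb\Ar^k\bb$ becomes $\bb\Ar^{k-1}\bb\ar$, while each $\bb\ar^n\Bb$ becomes $\bb\ar\cdot\ar^n\cdot\Ar\Bb = \bb\ar^n\Bb$, i.e.\ the $\Bb$ context is preserved exactly. Hence the image is again a word produced by the machine, and the total number of $\Ar$'s strictly decreases unless it was already zero. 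This is the monovariant you were missing, and it terminates at a word with no $\Ar$'s, which is potentially positive by \autoref{prop:allbutone}. No alternation, no invocation of \autoref{lemma:switch}, and no reliance on Goldstein's Criterion at all. I'd suggest replacing your alternation sketch with this single-automorphism argument.
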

\begin{proof}
  Fix a word $w$ produced by the machine.
  Consider the effect of $\bb \mapsto \bb \ar$ on $w$.
  All the $\Ar$ syllables in $w$ are flanked by $\bb$ on both sides, so $\bb\mapsto\bb\ar$ decreases each of their lengths by $1$.
  Since each $\Bb$ is preceded by $\bb \ar^n$, any $\Ar$ it introduces cancels with one of the $\ar$s and the $\bb$ introduces a new $\ar$ to replace the one that was canceled.
  Thus the resulting word is still generated by the machine and has strictly fewer $\Ar$s (unless it had none to begin with).
  By induction, repeated application of this automorphism eventually results in a word with no $\Ar$s.
  Since all words in $F_2$ which have only positive exponents on one of the generators are potentially positive, this word is potentially positive.

  The machine has two nodes labeled with $\ar$, however we can be sure there are no duplicate paths since one has only incoming edges from itself and $\bb$ and the other has only incoming from itself and $\Bb$.
  
  The characteristic polynomial of the transition matrix for the machine in \autoref{machine:lb} is
  \begin{align*}
    \left(1-\lambda\right) \left(\lambda^4 - 3 \lambda^3 + \lambda^2 + \lambda - 1\right).
  \end{align*}
  This has largest root $\lambda_1 \approx 2.50506841362147$, and thus the growth of paths of length $n$ is $\Theta(\lambda_1^n)$.
\end{proof}

\subsection{Rank 3 through 7}
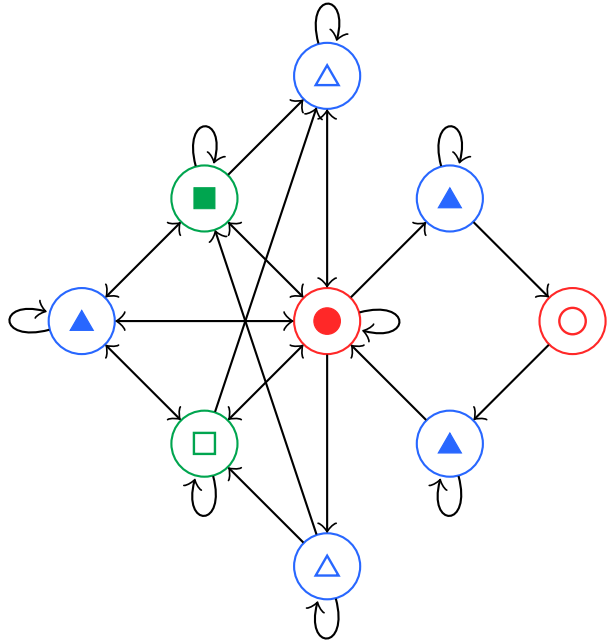
\begin{figure}[h]
\begin{tikzpicture}
\begin{scope}[state,thick]
  \node[draw=ourred](s1){$\bigar$};
  \node[draw=ourblue](s2)[above right=of s1]{$\bigbb$};
  \node[draw=ourred](s3)[below right=of s2]{$\bigAr$};
  \node[draw=ourblue](s4)[below left=of s3]{$\bigbb$};
  \node[draw=ourgreen](s5)[above left=of s1]{$\bigcg$};
  \node[draw=ourgreen](s6)[below left=of s1]{$\bigCg$};
  \node[draw=ourblue](s7)[below left=of s5]{$\bigbb$};
  \node[draw=ourblue](s8)[above right=of s5]{$\bigBb$};
  \node[draw=ourblue](s9)[below right=of s6]{$\bigBb$};
\end{scope}
\path[->,thick]
    (s1) edge[loop right] ()
         edge[] (s2)
         edge[] (s9)
    (s2) edge[loop above] ()
         edge[] (s3)
    (s3) edge[] (s4)
    (s4) edge[loop below] ()
         edge[] (s1)
    (s5) edge[loop above] ()
         edge[] (s8)
    (s6) edge[loop below] ()
         edge[] (s8)
    (s7) edge[loop left] ()
    (s8) edge[loop above] ()
    (s9) edge[loop below] ()
         edge[] (s5)
         edge[] (s6)
  ;
  \path[<->,thick]
    (s1) edge (s5)
         edge (s6)
         edge (s7)
         edge (s8)
    (s5) edge (s7)
    (s6) edge (s7)
  ;
\end{tikzpicture}
\caption{This machine only produces potentially positive words.}
\label{machine:lb3}
\end{figure}
\begin{figure}
    \centering
\begin{tikzpicture}
  \clip(-4,-5)rectangle(9.25,5.5);
  \begin{scope}[state,thick]
    \node[draw=ourred](s1){$\bigar$};
    \node[draw=ourblue](s2)[above right=of s1]{$\bigBb$};
    \node[draw=ourblue](s3)[below right=of s1]{$\bigBb$};
    \node[draw=ourgreen](s4)[right=of s2]{$\bigCg$};
    \node[draw=ourgreen](s5)[right=of s3]{$\bigCg$};
    \node[draw=ourblue](s6)[above left=of s1] {$\bigbb$};
    \node[draw=ourblue](s7)[below left=of s1] {$\bigbb$};
    \node[draw=ourred](s8)[below left=of s6] {$\bigAr$};
    \node[draw=magenta](s9)[below right=of s4] {$\bigdm^{\magenta \pm}$};
    \node[draw=ourblue](s10)[above right=of s9] {$\bigbb$};
    \node[draw=ourgreen](s11)[below right=of s9] {$\bigcg$};
    \node[draw=ourgreen](s12)[above left=of s10] {$\bigcg$};
    \node[draw=ourgreen](s13)[above right=of s10] {$\bigcg$};
    \node[draw=ourblue](s14)[above right=of s12] {$\bigBb$};
  \end{scope}
  \path[->,thick]
    (s1) edge[loop left,draw=ourred] (s1)
         edge[draw=ourred,bend left] (s2)
         edge[draw=ourred,bend left] (s3)
         edge[draw=ourred,out=90,in=120,looseness=1.5] (s4)
         edge[draw=ourred,out=-90,in=-120,looseness=1.5] (s5)
         edge[draw=ourred,bend right] (s6)
         edge[draw=ourred,bend left,out=-15,in=190] (s9)
         edge[draw=ourred,out=90,in=90,looseness=1.6] (s10)
         edge[draw=ourred,out=-90,in=-120,looseness=1.8] (s11)
         edge[draw=ourred,out=90,in=130,looseness=1.2] (s12)

    (s2) edge[loop above,draw=ourblue] (s2)
         edge[draw=ourblue] (s4)
         edge[draw=ourblue] (s5)
         edge[draw=ourblue,out=-60,in=150,looseness=1] (s9)
         edge[draw=ourblue,out=60,in=195,looseness=1.2] (s12)
         edge[draw=ourblue,out=-60,in=140,looseness=0.5] (s11)

    (s3) edge[loop below,draw=ourblue] (s3)
         edge[draw=ourblue,bend left] (s1)

    (s4) edge[loop above,draw=ourgreen] (s4)
         edge[bend left,draw=ourgreen] (s9)
         edge[draw=ourgreen,out=-15,in=-75] (s10)

    (s5) edge[loop below,draw=ourgreen] (s5)
         edge[draw=ourgreen] (s3)
         edge[draw=ourgreen,in=-27.5,out=180,looseness=0.6] (s1)

    (s6) edge[loop below,draw=ourblue] (s6)
         edge[bend right,draw=ourblue] (s8)

    (s7) edge[loop above,draw=ourblue] (s7)
         edge[bend right,draw=ourblue] (s1)

    (s8) edge[bend right,draw=ourred] (s7)

    (s9) edge[draw=magenta,loop right] (s9)
         edge[draw=magenta,in=60,out=-110,looseness=1] (s3)
         edge[draw=magenta,bend left] (s5)
         edge[draw=magenta,out=170,in=10] (s1)
         edge[draw=magenta,out=15,in=-100] (s10)
         edge[draw=magenta,out=15,in=100,looseness=1.3] (s11)

    (s10) edge[draw=ourblue,loop right] (s10)
          edge[draw=ourblue,bend left] (s12)
          edge[draw=ourblue,out=-160,in=27.5] (s1)
          edge[draw=ourblue,out=-160,in=0] (s4)
          edge[draw=ourblue,out=-160,in=60] (s9)
          edge[draw=ourblue,out=-160,in=120] (s5)
          edge[draw=ourblue,out=-160,in=140,looseness=0.8] (s11)

    (s11) edge[loop right,draw=ourgreen] (s11)
          edge[draw=ourgreen,out=165,in=-35,looseness=1.4] (s3)
          edge[draw=ourgreen,in=-27.5,out=165] (s1)
          edge[draw=ourgreen,bend left] (s9)
          edge[draw=ourgreen,out=165,in=-75] (s10)

    (s12) edge[draw=ourgreen,loop right] (s12)
          edge[draw=ourgreen,bend left] (s14)

    (s13) edge[draw=ourgreen,loop left] (s13)
          edge[draw=ourgreen,bend left] (s10)
          edge[draw=ourgreen,out=195,in=80,looseness=0.2] (s3)
          edge[draw=ourgreen,out=195,in=-27.5,looseness=0.5] (s1)

    (s14) edge[draw=ourblue,bend left] (s13)
    ;
\end{tikzpicture}
    \caption{%
    The machine only producing potentially positive words described in (\ref{machine:lbj}) for rank 4.
    In that machine the nodes labeled $\dm$ and $\Dm$ have the exact same incoming and outgoing edges, except in that they each have a self-loop but no edge to the other.
    To reduce clutter, we have combined these two nodes with nearly identical behavior into a single node.
    This node is labeled $\dm^\pm$, and indicates that when following an incoming arrow to that node from another one, the machine can produce either $\dm$ or $\Dm$.
    However, when following an in arrow from $\dm^\pm$ to itself, the machine cannot switch the sign.
    When calculating the eigenvalue one can use the original matrix or the above if the edges directed to the $\dm^\pm$ labeled node (other than the self-loop) are weighted with a 2 (representing the two possibilities).
    }%
   \label{machine:lb4}
\end{figure}
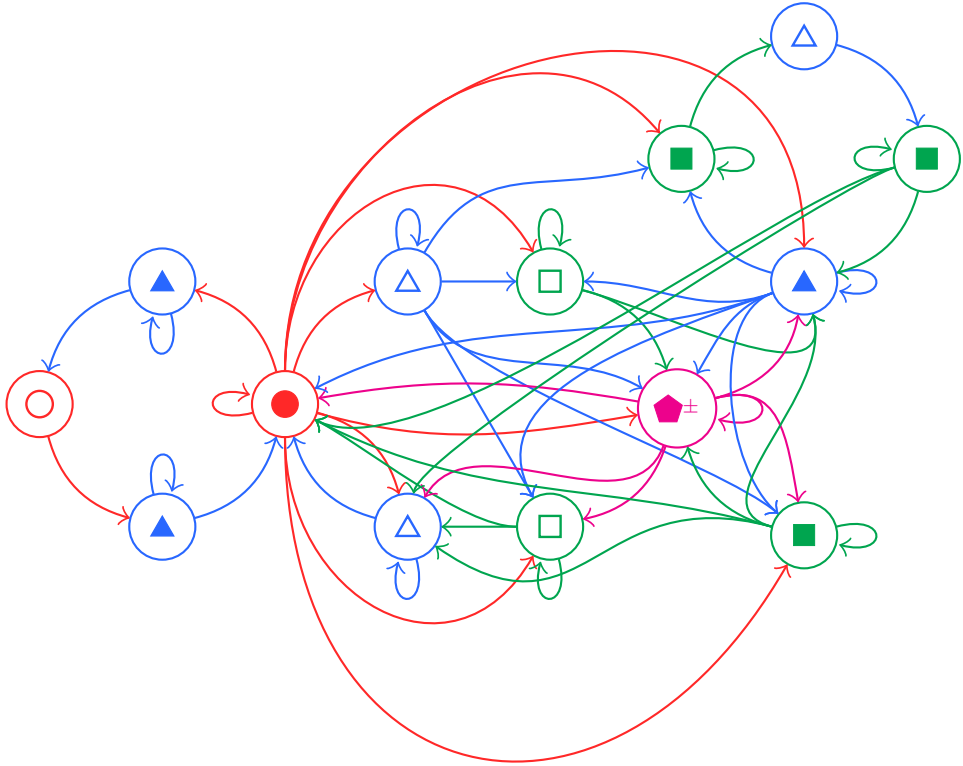
To get lower bounds in higher ranks we will provide a general strategy.
For this it might help to refer to the machines in Figures \ref{machine:lb3} and \ref{machine:lb4}.

Looking specifically at \autoref{machine:lb4},
the strategy to take words produced by this to positive words is to send out $\bb$ in both directions from $\ar$, then send out $\cg$ in both directions from $\bb$.
The circular portions of the machine on the left and right insulate the $\Ar$s and $\Bb$s until the $\Bb$s and $\Cg$s in the middle are eliminated.
This works because everything that can point to the $\cg$ nodes on the right eventually ``becomes'' a $\bb$ when we send $\bb$ out in both directions from $\ar$.
At the end we can use the $\cg$s to eliminate the remaining appearances of $\Bb$ and the $\bb$s to eliminate the remaining $\Ar$s.
Then we can use \autoref{prop:allbutone} to finish.
In higher ranks we iterate this strategy.
We will give the general description in more detail below.

We first present the general adjacency matrix of our automaton for $F_r$ as a block matrix \hyperref[machine:lbj]{below}.
Here $U$ is a matrix with all entries above the diagonal equal to 1 and all entries below and on the diagonal equal to 0.
$L$ is $U^{\intercal}$, and $I$ is the identity matrix.
All of these blocks are square $(r-2)\times (r-2)$ matrices.

We will also name and label each node.
We name the nodes with Greek letters corresponding to the row of the block array and with a subscript equal to the subscript of the label (e.g.\ there is no $\varepsilon_1$ node because there is no node labeled $x_1^{-1}$ in the $\varepsilon$ row of the block matrix).

It is helpful to note that in all cases a $U$ between two sets of nodes means that there is a transition between nodes where the source has a strictly higher index than the destination.
$L$ has the opposite meaning, with the source having a strictly lower index than the destination.

\begin{equation}
\label{machine:lbj}
\begin{blockarray}{rl[ccccccccc]}
\alpha_1 & \left\{x_1\right.
& 1 & 1 & 1 & 1 & 0 & 1 & 0 & 0 & 1 \\
\beta_i & \left\{\begin{matrix}x_2 \\ \vdots \\ x_{r-1}\end{matrix}\right.
& 1 & 1 & 1 & 1 &U+L& U & 0 & 0 & U \\
\gamma_r & \left\{x_r\right.
& 1 & 1 & 1 & 0 & 1 & 0 & 0 & 0 & 0 \\
\delta_r & \left\{x_r^{-1}\right.
& 1 & 1 & 0 & 1 & 1 & 0 & 0 & 0 & 0 \\
\varepsilon_i & \left\{\begin{matrix} x_2^{-1} \\ \vdots \\ x_{r-1}^{-1} \end{matrix}\right.
& 1 & L & 0 & 0 &L+I& 0 & 0 & 0 & 0 \\
\zeta_i & \left\{\begin{matrix} x_2^{-1} \\ \vdots \\ x_{r-1}^{-1} \end{matrix}\right.
& 1 &U+L& 1 & 1 & L &U+I& 0 & 0 & U \\
\eta_i & \left\{\begin{matrix} x_2 \\ \vdots \\ x_{r-1} \end{matrix}\right.
& 1 & L & 0 & 0 & L & 0 & I & 0 & 0 \\
\theta_i & \left\{\begin{matrix} x_1^{-1} \\ \vdots \\ x_{r-2}^{-1} \end{matrix}\right.
& 0 & 0 & 0 & 0 & 0 & 0 & I & 0 & 0 \\
\iota_i & \left\{\begin{matrix} x_2 \\ \vdots \\ x_{r-1} \end{matrix}\right.
& 0 & 0 & 0 & 0 & 0 & 0 & 0 & I & I \\
\end{blockarray}
\end{equation}

\begin{theorem}
  All words produced by (\ref{machine:lbj}) are potentially positive.
\end{theorem}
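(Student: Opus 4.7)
The plan is to generalize both the proof of \autoref{thm:lowerboundF2} and the informal rank-$4$ argument given in the excerpt, by induction on $r$. Given any word $w$ produced by (\ref{machine:lbj}), I will construct an explicit finite sequence of Nielsen automorphisms whose composition sends $w$ to a word in which all but one generator appears only with positive exponents, at which point \autoref{prop:allbutone} applies.

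The first step is a careful reading of the block structure of (\ref{machine:lbj}) to pin down the local context of each inverse-labeled node. A $U$ or $L$ block between two index sets forces a strict inequality on the subscripts of the flanking generators, while an $I$ block forces equality; these constraints make every inverse syllable $x_j^{-1}$ produced by the automaton appear only in a very restricted neighborhood. For example, the $\theta_i$ nodes (labeled $x_i^{-1}$, $1 \leq i \leq r-2$) have incoming edges only from $\eta_i$ (labeled $x_i$, via $I$) and outgoing edges only to $\iota_i$ (labeled $x_{i+1}$, via $I$); so each occurrence of $x_i^{-1}$ from this block is sandwiched between $x_i$ on the left and $x_{i+1}$ on the right. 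Analogous local-context statements hold for the $\delta_r$, $\varepsilon_i$ and $\zeta_i$ blocks.

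The second step is an iterative application of Nielsen automorphisms $x_j \mapsto x_j x_{j+1}$ (or their mirrors), mimicking the $F_2$ argument. If $x_j^{-1}$ is flanked on the right by $x_{j+1}$, the new $x_{j+1}$ created by the automorphism cancels the flanking $x_{j+1}$, killing the $x_j^{-1}$ syllable while preserving the net positivity count of $x_{j+1}$. Starting from the innermost block pair $(\theta_i, \iota_i)$ and sweeping outward through the $\zeta$, $\varepsilon$, $\delta$, $\beta$ and $\alpha$ rows, each round eliminates one inverse-label's worth of syllables. Alongside each round I must verify that the image of $w$ is still produced by a reduced sub-automaton of (\ref{machine:lbj}) in which the eliminated inverse label no longer appears, so that the same argument applies in the next round.

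The main obstacle is the second step's bookkeeping, and in particular the coupled $U + L$ and $L + I$ entries in the $\beta_i$ and $\zeta_i$ rows: these do not enforce a unique flanking pair for each occurrence but rather a disjunction across several choices of subscript. I expect to handle this by ordering the Nielsen automorphisms by increasing subscript $j$, maintaining the invariant that after round $j$ no syllable $x_k^{-1}$ with $k \leq j$ survives, and checking round by round that the remaining sub-structure of (\ref{machine:lbj}) is compatible with the next automorphism. After $r-1$ such rounds only $x_r$ can retain negative exponents, and \autoref{prop:allbutone} completes the proof.
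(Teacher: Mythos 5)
Your broad strategy (Nielsen-type automorphisms to kill the negatives, then \autoref{prop:allbutone}) is indeed the one the paper takes, but the execution has two concrete errors, and the gap you flag as "bookkeeping" is actually where the key structure of the argument lives.

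First, the local context you derive for the $\theta_i$ nodes is wrong. Reading the block matrix with the convention stated in the paper (rows are destinations, columns are sources, and $U$/$L$ denote strict index inequalities), $\theta_i$ (labeled $x_i^{-1}$) has its unique incoming edge from the $\eta$ block and its unique outgoing edge to the $\iota$ block, but the offset in subscripts puts it between $\eta_{i+1}$ and $\iota_{i+1}$, \emph{both} labeled $x_{i+1}$. The subword is $x_{i+1}\,x_i^{-1}\,x_{i+1}$, as one can sanity-check against the rank-3 machine in \autoref{machine:lb3} where $\theta_1$ sits in the cycle $s_2 \to s_3 \to s_4$ giving $x_2 x_1^{-1} x_2$. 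Your reading ($x_i$ on the left, $x_{i+1}$ on the right) would produce $x_i x_i^{-1} x_{i+1}$, which is not even reduced. This matters directly: to eliminate $x_i^{-1}$ flanked by $x_{i+1}$ you need $x_{i+1}\mapsto x_{i+1}x_i$ (a \emph{decreasing}-index Nielsen), whereas applying $x_i\mapsto x_i x_{i+1}$ as in your plan leaves the $x_i^{-1}$ intact.

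Second, a single one-sided Nielsen $x_j\mapsto x_j x_{j+1}$ cannot deal with the $\varepsilon$ and $\zeta$ blocks, since those nodes carry self-loops and can emit arbitrarily long runs $x_j^{-k}$; one application only shortens the run by one, and you have no argument that the intermediate words remain inside a sub-automaton where the inductive hypothesis applies. The paper sidesteps this by splitting the elimination into two phases. First it applies the \emph{two-sided} automorphisms $x_j\mapsto x_{j+1}^{n_{j+1}} x_j x_{j+1}^{n_{j+1}}$ for $j=1,\dots,r-2$ in increasing order, where $n_{j+1}$ exceeds the longest $x_{j+1}^{-1}$ run; this wipes out all syllables coming from $\varepsilon$ and $\zeta$ in one pass each, using the fact (encoded in the $U$/$L$ blocks) that $\varepsilon_j$ is always preceded, and $\zeta_j$ always followed, by a lower-index positive letter, while the two-sidedness ensures the $x_{j+1}^{-n_{j+1}}$ introduced around each $x_j^{-1}$ cancels against the $x_{j+1}^{n_{j+1}}$ from its neighbors. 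After this pass every $\theta_i$ residue has been pushed into the context $x_{r-1}\,x_i^{-1}\,x_{r-1}$, and a second, \emph{decreasing} pass $x_{r-1}\mapsto x_{r-1}x_{r-2},\ \dots,\ x_2\mapsto x_2 x_1$ eliminates them; only then does \autoref{prop:allbutone} finish off the $x_r^{-1}$ syllables from $\delta_r$. Without the exponents and without the change of direction between the two phases, the induction you sketch does not close.
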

\begin{proof}
    Let us have a word $w$ produced by the machine in (\ref{machine:lbj}).
    We will explicitly construct a sequence of automorphisms which brings $w$ to a positive word.
    To begin we will apply the automorphism $x_1\mapsto x_2^{n_2}x_1x_2^{n_2}$ where $n_2$ is one more than the exponent on the largest $x_2^{-1}$ syllable, i.e.\ the length of the longest run of consecutive $x_2^{-1}$s.
    Three nodes in the machine are labeled with $x_2^{-1}$.
    Note that two of these, $\varepsilon_2$ and $\zeta_2$, must always occur in a syllable adjacent to a $x_1$.
    Specifically $\varepsilon_2$ must always come after a $x_1$ and $\zeta_2$ must always come before an $x_1$.
    Thus applying the automorphism $x_1\mapsto x_2^{n_2}x_1x_2^{n_2}$ replaces all syllables generated by $\varepsilon_2$ and $\zeta_2$ with positive $x_2$ syllables.
    Note further that the only place $x_1^{-1}$ can occur is in substrings of the form $x_1x_2^ix_1^{-1}x_2^jx_1$ (for positive $i$ and $j$).
    Thus the $x_2^{-n_2}$ syllables introduced from $x_1^{-1}$ syllables by applying this automorphism are immediately canceled.

    Next we apply $x_2\mapsto x_3^{n_3}x_2x_3^{n_3}$ where $n_3$ is one more than the exponent on the largest $x_3^{-1}$ syllable.
    This eliminates all the $x_3^{-1}$ syllables produced by $\varepsilon_3$ and $\zeta_3$.
    Note that $x_2^{-1}$ syllables produced by $\theta_i$ may appear in various substrings in the initial word $w$, but after applying $x_1\mapsto x_2^{n_2}x_1x_2^{n_2}$, they always appear in substrings of the form $x_2x_3^ix_2^{-1}x_3^jx_2$.
    Thus no new $x_3^{-1}$ syllables are introduced by applying this second automorphism.

    We continue this process by having each generator up to $x_{r-2}$ produce the following generator in both directions, until we have eliminated all syllables originating from $\varepsilon_i$ and $\zeta_i$.

    At this point every negative syllable is either:
    \begin{enumerate}
      \item \label{item:ears} appearing in a subword of the form $x_{r-1}x_i^{-1}x_{r-1}$ for $i < r-1$.
      (These originate from $\theta_i$)
      \item a $x_r^{-1}$ syllable.
      (These originate from $\delta_i$)
    \end{enumerate}
    We will handle (\ref{item:ears}) first.
    Note that at this point all $x_{r-1}$ syllables are positive.
    We start by applying the automorphism $x_{r-1} \mapsto x_{r-1}x_{r-2}$.
    This eliminates all the remaining negative $x_{r-2}$ syllables since they are all adjacent to a $x_{r-1}$.
    Now every syllable previously covered by (\ref{item:ears}) is adjacent to a $x_{r-2}$, so we perform the automorphism $x_{r-2}\mapsto x_{r-2}x_{r-3}$ and continue in this descending pattern until we apply $x_2\mapsto x_2x_1$, eliminating the final negative syllable of type (\ref{item:ears}).

    Now the only negative syllables remaining are of the form $x_r^{-k}$, so by \autoref{prop:allbutone} we are done.
    
\end{proof}

\section{Upper bound}

We want to produce an upper bound for the growth rate of potentially positive words in $F_2$, so we will begin by using a necessary condition for potential positivity.

\begin{figure}[h]
\caption{A machine which produces exactly those words words which satisfy \goldC\ with $y=\Bb$ and $x=\ar$\\}
\begin{tikzpicture}
\begin{scope}[state,thick]
  \node[draw=ourred](s1){$\bigAr$};
  \node[draw=ourblue](s2)[right=of s1]{$\bigbb$};
  \node[draw=ourred](s3)[right=of s2]{$\bigar$};
  \node[draw=ourblue](s4)[right=of s3]{$\bigBb$};
\end{scope}
\path[->,thick]
  (s1) edge[ourred,loop below] ()
       edge[ourred,bend left] (s2)
  (s2) edge[ourblue,loop below] ()
       edge[ourblue,bend left] (s1)
       edge[ourblue,bend left] (s3)
  (s3) edge[ourred,loop below] ()
       edge[ourred,bend left] (s2)
       edge[ourred,bend left] (s4)
  (s4) edge[ourblue,bend left] (s3)
  ;
\end{tikzpicture}
\label{mcm:gold}
\end{figure}

The machine in \autoref{mcm:gold} produces all words which satisfy \goldC\ for a particular choice of $x$ and $y$.
It is easy to see that the growth rate of all words which satisfy \goldC\ has the same growth rate, since there are 8 meaningful $x,y$ combinations and each combination gives a relabeling of the machine in \autoref{mcm:gold}.

Thus we have the following:

\begin{lemma}\label{lemma:goldgrowth}
  The growth of $\PP2$ is $\Omega(\lambda_1^n)$, where $\lambda_1\approx 2.53209$, is the largest root of the polynomial:
  \begin{align}\label{eqn:goldcp}
  \lambda^3-3\lambda^2+3
  \end{align}
\end{lemma}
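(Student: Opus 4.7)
The plan is to show that every word produced by the automaton in \autoref{mcm:gold} is potentially positive, and then count these words via \autoref{cor:eigenval}.

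First, inspecting \autoref{mcm:gold} reveals two structural conditions on its output: every $\Bb$ is a singleton cyclically flanked by $\ar$ (since $\Bb$'s only in- and out-neighbor is $\ar$), and every $\Ar$ syllable is cyclically flanked by $\bb$ (since from outside the $\Ar$ node the only neighbor is $\bb$). I would then argue that any cyclically reduced $w$ satisfying both flanking conditions is potentially positive, following the pattern of \autoref{thm:lowerboundF2}. Specifically, applying $\ar\mapsto\bb\ar$ sends each occurrence of $\Bb$ into a $\Bb\bb$ cancellation arising from its right-neighboring $\ar$, while the complementary move $\bb\mapsto\bb\ar$ shortens each $\Ar$-syllable by one exactly as in the proof of \autoref{thm:lowerboundF2}. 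Alternating and iterating these two moves should reduce $w$ to a word whose only negative syllables lie on a single generator, after which \autoref{prop:allbutone} concludes.

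Next, I would check that \autoref{mcm:gold} is reduced (by inspection, none of its edges join inverse-labeled nodes), mixing (strongly connected with a self-loop at every vertex, hence aperiodic), and one-to-one labeled (each of $\Ar,\bb,\ar,\Bb$ appears as the label of exactly one node). By \autoref{cor:eigenval} the number of closed paths of length $n$ is $\Theta(\lambda_1^n)$, and the one-to-one labeling identifies this, up to a constant, with the number of cyclic words produced.

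A direct cofactor expansion of the $4\times 4$ adjacency matrix yields characteristic polynomial $\lambda(\lambda^3-3\lambda^2+3)$, whose largest root is $\lambda_1\approx 2.53209$, matching the polynomial displayed in (\ref{eqn:goldcp}). Combining the previous steps, the count of cyclically reduced words in $\PP 2$ of length $n$ is $\Omega(\lambda_1^n)$; \autoref{cor:CRisenough} then extends the bound to all of $\PP 2$ in $F_2$ because $\lambda_1>\sqrt 3$.

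The main obstacle is the first step. The two Nielsen moves interact awkwardly: $\ar\mapsto\bb\ar$ introduces a fresh $\Bb$ on the right of every $\Ar$, while $\bb\mapsto\bb\ar$ introduces a fresh $\Ar$ on the right of every $\Bb$, so the count of negative syllables does not decrease straightforwardly. Showing that an appropriate monovariant (for instance total word length, with ties broken by total negative-syllable length) strictly decreases along a carefully chosen alternating schedule of these moves, and verifying that the dual flanking conditions persist in a usable form after each application, is the delicate core of the argument.
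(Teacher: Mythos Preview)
Your approach is aimed at the wrong inequality. Despite the $\Omega$ in the displayed statement (which, given its placement in the ``Upper bound'' section and the paper's own argument, is evidently a typo for $O$), this lemma is an \emph{upper} bound: the machine in \autoref{mcm:gold} accepts exactly the cyclically reduced words satisfying \goldC\ with $x=\Bb$, $y=\ar$, and by \autoref{gold} every cyclically reduced potentially positive word satisfies the criterion for some choice of $x,y$. Hence $\PP2$ is contained in a finite union of relabelings of the language of this machine, and the eigenvalue $\lambda_1\approx 2.532$ bounds the growth of $\PP2$ from above. The paper's proof is a one-line containment argument of exactly this kind.

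Your plan, by contrast, tries to show that every word produced by \autoref{mcm:gold} is potentially positive, which would yield a lower bound. This cannot work. \goldC\ is only necessary, not sufficient; more decisively, \autoref{thm:main} pins the growth of $\PP2$ at $\Theta(2.505\ldots^n)$, strictly below $2.532^n$, so no $\Omega(\lambda_1^n)$ lower bound with this $\lambda_1$ can hold. Your own hesitation that the two Nielsen moves ``interact awkwardly'' and that no obvious monovariant decreases is precisely the symptom: the procedure you sketch does not terminate on arbitrary words satisfying \goldC, because some of them are genuinely not potentially positive (for an explicit family, see the words $\Bb\ar^{n_1}\cdots\Bb\ar^{n_k}\Bb\ar\bb\Ar^m\bb\ar$ with some $n_i<m-1$ used later in the paper). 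The correct direction is containment $\PP2\subseteq G$, not $G\subseteq\PP2$.
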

\begin{proof}
  From \autoref{mcm:gold} we get the following transition matrix:
  \begin{align}\label{eqn:goldmatrix}
  \begin{bmatrix}
  1 & 1 & 0 & 0\\
  1 & 1 & 1 & 0\\
  0 & 1 & 1 & 1\\
  0 & 0 & 1 & 0
  \end{bmatrix}
  \end{align}
  We can calculate the characteristic polynomial of (\ref{eqn:goldmatrix}) to be
  \begin{equation*}
    \lambda\left(\lambda^3 -3\lambda^2+3\right)
  \end{equation*}
  
  Since each node has a unique label, the function assigning paths to words is certainly injective.
  Thus the growth rate of the machine in \autoref{mcm:gold} is the largest root of (\ref{eqn:goldcp}).
  By \autoref{gold}, every potentially positive word is generated by this machine, so the growth rate of potentially positive words is $\Omega(\lambda_1^n)$.
\end{proof}

We will use the notation $\gamma$ to mean the automorphism $\ar \leftrightarrow \bb$.
We have seen that the number of potentially positive words is bounded above by the number of words that satisfies \goldC.
We can bring this bound down by considering words which satisfy \goldC, and still satisfy it after applying an automorphism.
By \autoref{lemma:switch}, if a potentially positive word satisfies the criterion with a certain $x$ and $y$, after applying a certain automorphism it must either satisfy the criterion with the same $x$ and $y$ or with $\gamma(x)$ and $\gamma(y)$.
We can then split the set of potentially positive words satisfying \goldC\ after applying the automorphism from \autoref{lemma:switch} into two sets; one corresponding to $x$ and $y$, the other to $\gamma(x)$ and $\gamma(y)$.
The $\Theta$-class of words satisfying the criterion after the automorphism is then the $\Theta$-class of the larger of these two sets, so we can consider them separately.
Furthermore, if we apply the automorphism specified by \autoref{lemma:switch} again, each of these two sets can be split into two more cases in the same manner. 
Iterating this process gives an infinite binary tree (\autoref{figure:tree}).
Each layer of the tree consists of all the ways a word can satisfy \goldC\ over the course of a fixed number of iterations of this process.
The fastest growing set in each layer gives an upper bound on the number of potentially positive words in $F_2$. We formalize this idea with the following definition.

\begin{definition}\label{def:tree}
  Let us define $G$ to be the set of words satisfying \goldC\ with $x=\Bb$ and $y=\ar$.
  Let $\varphi$ be the automorphism $\bb \mapsto \bb \ar$ described in \autoref{lemma:switch} and let $\gamma$ be the automorphism $\bb \leftrightarrow \ar$ as before.
  Recursively:
  \begin{align*}
  R(S) &= \left\{w: w\in G, \varphi(w)\in S\right\} \\
  L(S) &= \left\{w: w\in G, \gamma(\varphi(w))\in S\right\}
  \end{align*}
  We will employ a convenient shorthand wherein we leave the parentheses and a final $G$ as implicit.
  Thus, e.g. we will write $L^2R$ to mean $L^2(R(G))$.
\end{definition}

\begin{figure}
  \begin{tikzpicture}
    \node(e) at (4,0) {$G$};
    
    \node(l) at (2,-2) {$L$};
    \node(r) at (6,-2) {$R$};

    \node(ll) at (1,-4) {$L^2$};
    \node(lr) at (3,-4) {$LR$};
    \node(rl) at (5,-4) {$RL$};
    \node(rr) at (7,-4) {$R^2$};

    \node(lll) at (1/2,-6) {$L^3$};
    \node(llr) at (3/2,-6) {$L^2R$};
    \node(lrl) at (5/2,-6) {$LRL$};
    \node(lrr) at (7/2,-6) {$LR^2$};

    \node(rll) at (9/2,-6) {$RL^2$};
    \node(rlr) at (11/2,-6) {$RLR$};
    \node(rrl) at (13/2,-6) {$R^2L$};
    \node(rrr) at (15/2,-6) {$R^3$};
    
    \path
      (e) edge (l)
          edge (r)
      (l) edge (ll)
          edge (lr)
      (r) edge (rl)
          edge (rr)
      (ll) edge (lll)
           edge (llr)
      (lr) edge (lrl)
           edge (lrr)
      (rl) edge (rll)
           edge (rlr)
      (rr) edge (rrl)
           edge (rrr)
    ;
  \end{tikzpicture}
  \caption{The first four layers of sets drawn as a tree}
  \label{figure:tree}
\end{figure}
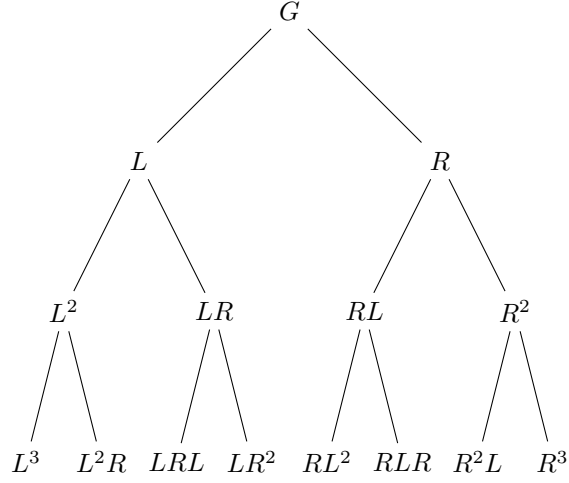

Our proof of \autoref{thm:main} relies on the following three facts about the tree in \autoref{figure:tree}.

\begin{proposition}\label{prop:children are small}
  For every set $S$ in the tree, $\mathfrak g(SL), \mathfrak g(SR)\in O\left(\mathfrak g(S)\right)$.
  In other words, the growth rate of the children of $S$ is at most that of $S$.
\end{proposition}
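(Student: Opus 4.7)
The plan is to prove the stronger set-theoretic fact that $SL \subseteq S$ and $SR \subseteq S$; from either of these the growth inequality $\mathfrak{g}_n(SL) \leq \mathfrak{g}_n(S)$ (resp.\ $\mathfrak{g}_n(SR) \leq \mathfrak{g}_n(S)$) holds pointwise in $n$, which immediately yields the claimed $O$-relation.

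The first step is to unpack Definition~\ref{def:tree} carefully. If $S$ corresponds to the string $s_1 s_2 \cdots s_k \in \{L, R\}^\ast$, meaning $S = s_1(s_2(\cdots s_k(G) \cdots))$ with the implicit final $G$ made explicit, then its tree-children are obtained by appending $L$ or $R$ on the right of the string. Under the convention, the string $s_1 s_2 \cdots s_k L$ denotes $s_1(s_2(\cdots s_k(L(G)) \cdots))$, so $SL$ is precisely the set obtained from $S$ by replacing the innermost $G$ with $L(G)$; similarly $SR$ is obtained by replacing $G$ with $R(G)$.

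The second step is to record two elementary properties of the operators $L$ and $R$. First, $L(G) \subseteq G$ and $R(G) \subseteq G$, since Definition~\ref{def:tree} imposes $w \in G$ as part of the defining condition of $L(T)$ and $R(T)$. Second, $L$ and $R$ are monotone: if $T \subseteq T'$, then $L(T) \subseteq L(T')$ and $R(T) \subseteq R(T')$, because the conditions $\gamma(\varphi(w)) \in T$ and $\varphi(w) \in T$ only weaken when the target set is enlarged. Combining these two facts, a straightforward induction on the depth $k$ of $S$ yields $SL \subseteq S$ and $SR \subseteq S$: starting from $L(G) \subseteq G$ (or $R(G) \subseteq G$) and applying the monotone operators $s_k, s_{k-1}, \ldots, s_1$ in turn preserves the containment.

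The only real obstacle is a notational one, namely making sure that the right-to-left convention in Definition~\ref{def:tree} correctly identifies appending $L$ to the right of the string with substituting $L(G)$ for the implicit $G$, rather than composing an additional $L$ on the outside of the operator sequence. Once this is clarified, the proof reduces to pure monotonicity of $L$ and $R$ on subsets of $G$, and the growth-function conclusion follows from $SL \subseteq S$ alone.
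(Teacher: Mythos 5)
Your proposal is correct and matches the paper's own proof in all essentials: both establish $L(G), R(G) \subseteq G$, invoke monotonicity of the operators $L$ and $R$, and conclude $SL \subseteq S$ (resp.\ $SR \subseteq S$) by induction on the depth of $S$, from which the growth bound is immediate. Your extra care in unpacking the right-to-left convention of Definition~\ref{def:tree} is a welcome clarification but does not change the argument.
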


We will call the set of words generated by the machine in \autoref{machine:lb} $R^\infty$, a notation which will make sense in light of the following proposition:
\begin{proposition}\label{prop:Rlim}
\begin{equation*}
\bigcap^\infty_{n=0} R^n = R^\infty
\end{equation*}
\end{proposition}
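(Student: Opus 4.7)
My plan is to identify $R^\infty$ structurally: I will show a cyclically reduced word $w$ lies in $R^\infty$ if and only if (i) $w \in G$, and (ii) $w$ has no cyclic subword of the form $\Bb\ar^n\Bb$ with $n \geq 1$. One direction reads off the machine of Figure \ref{machine:lb}: the unique $\Bb$-labeled node $s_2$ is entered only from the $\ar$-labeled $s_1$ and exits only to the $\ar$-labeled $s_3$, yielding (i); and $s_3$ has outgoing edges only to itself and to the $\bb$-labeled $s_4$, forbidding the $s_3 \to s_2$ transition that a $\Bb\ar^n\Bb$ cyclic subword would demand, yielding (ii). Conversely, given a cyclically reduced $w$ satisfying (i) and (ii), I build a closed path in the machine labeled by $w$ by assigning each $\Bb \mapsto s_2$, $\bb \mapsto s_4$, $\Ar \mapsto s_5$, and every $\ar$ to $s_3$ if its syllable immediately follows a $\Bb$ and to $s_1$ if it follows a $\bb$ (no other option arises by cyclic reducedness and (i)). A transition-by-transition check confirms each consecutive pair of letters of $w$ corresponds to a machine edge, with (ii) precisely preventing the only missing transition $s_3 \to s_2$.

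Given this characterization, the inclusion $R^\infty \subseteq \bigcap_n R^n$ follows from Theorem \ref{thm:lowerboundF2}: that theorem shows $\varphi(R^\infty) \subseteq R^\infty$, and $R^\infty \subseteq G$ by (i), so by induction $\varphi^k(w) \in G$ for all $k$, hence $w \in R^n$ for every $n$. For the reverse inclusion, let $w \in \bigcap_n R^n$; clearly $w \in G$, so it remains to verify (ii). Suppose for contradiction $w$ contains a $\Bb\ar^n\Bb$ cyclic subword with $n \geq 1$, sitting in context $\ldots\ar^m\Bb\ar^n\Bb\ar^{m'}\ldots$ with $m, m' \geq 1$ by (i). Applying $\varphi$ letter-by-letter and using $\varphi(\Bb) = \Ar\Bb$, this block becomes $\ldots\ar^m\Ar\Bb\ar^n\Ar\Bb\ar^{m'}\ldots$, and the two internal $\ar\Ar$ cancellations reduce it to $\ldots\ar^{m-1}\Bb\ar^{n-1}\Bb\ar^{m'}\ldots$, so $\varphi(w)$ still contains $\Bb\ar^{n-1}\Bb$ cyclically. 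Inducting (with $\varphi^k(w) \in G$ at each step ensuring the surrounding $\ar$-runs have length at least one), $\varphi^k(w)$ contains $\Bb\ar^{n-k}\Bb$ for $0 \leq k \leq n$; at $k=n$ this is $\Bb\Bb$, whose second $\Bb$ is preceded by $\Bb$, violating $G$ and contradicting $w \in R^n$.

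The main obstacle is tracking the boundary cancellations in the inductive step: new $\ar$'s from $\varphi(\bb) = \bb\ar$ in surrounding $\bb$-syllables and new $\Ar$'s from $\varphi(\Bb) = \Ar\Bb$ at nearby $\Bb$'s can alter the outer $\ar$-runs $\ar^m$ and $\ar^{m'}$, but never the interior $\ar^{n-k}$ between the two tracked $\Bb$'s, since all those cancellations occur strictly outside the inner block. The central $\ar$-run therefore shrinks by exactly one per iteration and vanishes at $k=n$, yielding $\Bb\Bb$; the cyclically degenerate case in which the two tracked $\Bb$'s coincide after wrap-around is handled similarly — in that situation $\varphi^n(w)$ cyclically reduces to a word where the sole $\Bb$ is cyclically preceded by $\Bb$ and still fails $G$.
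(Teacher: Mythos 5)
Your proof is correct, and it takes a somewhat different route from the paper's. The paper's proof of \autoref{prop:Rlim} is short precisely because it leans on \autoref{lem:Rn}, which has already been proved: that lemma characterizes each $R^n$ by the forbidden subwords $\Ar\Bb$, $\Bb\Ar$, and $\Bb\ar^k\Bb$ for $0 \leq k \leq n$, so $\bigcap_n R^n$ is immediately the set forbidding $\Ar\Bb$, $\Bb\Ar$, and $\Bb\ar^k\Bb$ for all $k \geq 0$, and one only needs to read off \autoref{machine:lb} that $R^\infty$ has exactly the same forbidden-block description (which is the content of your first paragraph). You instead bypass \autoref{lem:Rn}: for $R^\infty \subseteq \bigcap_n R^n$ you invoke the $\varphi$-invariance of $R^\infty$ already established inside \autoref{thm:lowerboundF2}, which is a clean and genuinely different observation; for $\bigcap_n R^n \subseteq R^\infty$ you redo the syllable-shrinking computation showing $\Bb\ar^k\Bb \mapsto \Bb\ar^{k-1}\Bb$ under $\varphi$, which is exactly the engine inside the paper's proof of \autoref{lem:Rn}. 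The net effect is a self-contained argument at the cost of re-deriving material the paper had already factored out, while the one new idea (using \autoref{thm:lowerboundF2}'s invariance for one inclusion) is a nice shortcut. Your handling of the degenerate cyclic case $w = \Bb\ar^n$ and of the boundary cancellations when tracking the interior $\ar$-run is correct and appropriately careful; note also that the paper's proof contains a typo (``union'' where it should read ``intersection''), which your version avoids.
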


\begin{proposition}\label{prop:dont go left}
For all $n \in \mathbb N$,
\begin{align*}
   \mathfrak{g}(R^nL)\in o(\mathfrak{g}(R^\infty))
\end{align*}
\end{proposition}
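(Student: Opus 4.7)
The plan is to first establish the base case $\mathfrak{g}(L) \in o(\mathfrak{g}(R^\infty))$ and then extend the bound to $R^n L$ for $n \geq 1$.

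For the base case, I would prove the structural equivalence that $w \in L$ if and only if $w \in G$ and every $\Ar$-run in $w$ has length at most $2$. The forward direction comes from tracing $\varphi: \bb \mapsto \bb\ar$ through the syllables of $w$: the $\Ar$-syllables in $\varphi(w)$ arise only from $\Ar$-runs of length $\geq 2$ in $w$ (the leading $\Ar$ of each such run cancels with the trailing $\ar$ of the preceding $\varphi(\bb)$), with a run of length $k$ in $w$ yielding a run of length $k-1$ in $\varphi(w)$. The $(\Ar,\bb)$-Goldstein condition on $\varphi(w)$ forces these residual runs to have length $\leq 1$, hence $k \leq 2$. For the converse, if all $\Ar$-runs in $w$ have length $\leq 2$, every surviving $\Ar$ in $\varphi(w)$ is flanked by $\bb$s, so $\gamma\varphi(w) \in G$.

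Using this characterization, I would build an automaton for $L$ by modifying the $G$-automaton of \autoref{mcm:gold}: split the $\Ar$-node into two states $\Ar_1, \Ar_2$ with a single edge $\Ar_1 \to \Ar_2$ and no $\Ar$-self-loops, capping $\Ar$-runs at length $2$. The resulting $5 \times 5$ adjacency matrix has characteristic polynomial $\lambda^5 - 2\lambda^4 - 2\lambda^3 + \lambda^2 + 2\lambda + 1$, whose largest root one checks numerically to be approximately $2.486$, strictly less than $\lambda_1 \approx 2.505$. This gives $\mathfrak{g}(L) \in o(\mathfrak{g}(R^\infty))$. For the inductive step $n \geq 1$, I would iteratively apply the preimage-under-$\varphi$ construction described in \autoref{ss:automata} to the $L$-automaton, intersecting with the $G$-automaton at each step; for each fixed $n$, the resulting automaton's largest eigenvalue can be verified to remain strictly below $\lambda_1$ by an analogous eigenvalue analysis.

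The main obstacle is the inductive step: although each preimage operation shrinks the set, the length distortion induced by iterating $\varphi$ (governed by the identity $|\varphi(w)| = |w| + \#\bb - \#\Bb$ applied to syllable counts in $w$) is substantial for typical words in $R^n$, so one must argue that the specific words in $R^n L$ have a sufficiently restricted $\bb$-$\Bb$ profile to keep the exponential growth rate strictly below $\lambda_1$.
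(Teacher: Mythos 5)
Your base case ($n=0$, i.e., bounding $\mathfrak g(L)$) is correct and takes a genuinely different angle from the paper. The structural characterization of $L$ as the words in $G$ whose $\Ar$-runs have length at most $2$ is accurate (it matches the paper's \autoref{lem:RnL} specialized to $n=0$), the $5$-state automaton is right, and I verified the characteristic polynomial $\lambda^5 - 2\lambda^4 - 2\lambda^3 + \lambda^2 + 2\lambda + 1$ and the location of its largest root (between $2.48$ and $2.49$), which is indeed strictly below $\lambda_1 \approx 2.505$. So $\mathfrak g(L) \in o(\mathfrak g(R^\infty))$ holds by this route.

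The gap is exactly where you flag it: the inductive step is not a proof but a promissory note. You say the eigenvalue of the $R^nL$ automaton ``can be verified to remain strictly below $\lambda_1$,'' but the automaton for $R^nL$ has a number of states growing with $n$ (the forbidden-word description from \autoref{lem:RnL} involves runs of length up to $n+2$), so there is no single finite eigenvalue computation to perform, and nothing in the proposal controls the sequence of eigenvalues uniformly. This is not a cosmetic worry: $R^nL \subsetneq R^n$ and the eigenvalue of $R^n$ converges to $\lambda_1$ from above, so a priori the eigenvalues of $R^nL$ could creep up toward $\lambda_1$. An ``intersect preimages and recompute'' scheme gives no mechanism to rule that out, because eigenvalues do not behave predictably under preimage-by-automorphism followed by intersection.

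The paper sidesteps this entirely by a combinatorial encoding rather than spectral analysis. Using \autoref{lem:Rn} and \autoref{lem:RnL}, it identifies the forbidden-subword difference between $R^nL$ and $R^\infty$ (roughly: $R^nL$ allows long $\ar$-runs between $\Bb$s but forbids long $\Ar$-runs between $\bb$s, while $R^\infty$ is the opposite), and builds an explicit length-preserving injection from the generic part of $R^nL$ into $R^\infty$ by swapping one pattern for the other inside each offending segment. This gives $\mathfrak g(R^nL) \in O(\mathfrak g(R^\infty))$ uniformly in $n$ for free. It then promotes $O$ to $o$ by observing that the image of the injection avoids a fixed subword, and invokes growth-sensitivity of irreducible sofic shifts (\cite{CSW}) to conclude the image is exponentially negligible in $R^\infty$. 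If you want to salvage your approach, the cleanest repair is probably to adopt this injection idea for the inductive step rather than pursuing the eigenvalue bound directly; your structural description of $\Ar$-runs in $L$ is already the $n=0$ shadow of the paper's replacement scheme.
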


Taking these propositions as given for now, we can prove our main result:
\begin{proof}[Proof of \autoref{thm:main}]
  Let us call the set of words produced by the $n$th layer of the tree $Y_n$.
  These can be thought of as all words which do not reach a non-positive word after $n$ steps of Goldstein's algorithm.

  We can see that $\mathfrak g(Y_n)$ gives successive upper bounds on the growth of $\PP 2$ which get tighter as $n$ approaches infinity.
  We will show that $\mathfrak g(Y_n)\sim \mathfrak g(R^n)$, and so the pointwise limit of $\mathfrak g(R^n)$ as $n$ approaches infinity is an upper bound.

  Since $R^nL$ grows strictly slower than $R^\infty$ (\autoref{prop:dont go left}), by induction, all sets in the tree which include any $L$ in their name must have strictly slower growth than $R^\infty$.
  Since $R^\infty$ is a subset of $R^n$ it must grow slower ($\mathfrak g(R^n)\in \Omega(\mathfrak g(R^\infty))$), and so for any node in the tree $S$ containing an $L$, $\mathfrak g(S)\in o(\mathfrak g(R^n))$.
  Since $Y_n$ is the union of $R^n$ with other sets in the tree containing $L$, $\mathfrak g(Y_n)\sim \mathfrak g(R^n)$.

  Since $\mathfrak g(R^n)$ converges to at most $\mathfrak g(R^\infty)$, this means our upper bounds converge to $O(\mathfrak g_k(R^\infty)) = O(\mathfrak \lambda_1^k)$.
  From \autoref{thm:lowerboundF2} we have that $\mathfrak g_k(\PP2) \in \Omega(\lambda_1^k)$ and so $\mathfrak g_k(\PP2) \in \Theta(\lambda_1^k)$.
\end{proof}

All that remains is to prove the propositions.
The proof of \autoref{prop:children are small} is the shortest.
\begin{proof}[Proof of \autoref{prop:children are small}]
  We will prove only that $\mathfrak g(SL) \in O(\mathfrak{g}(S))$.
  The other case follows in the same manner.
  We proceed by induction over all paths in the tree.
  By \autoref{def:tree}, we have that $L \subseteq G$.
  Let $Z_0$ and $Z_1$ be sets such that $Z_0 \subseteq Z_1$.
  It follows then that $R(Z_0)\subseteq R(Z_1)$ and likewise $L(Z_0)\subseteq L(Z_1)$.
  Then by induction $SL \subseteq S$ and therefore $\mathfrak g(SL) \in O(\mathfrak{g}(S))$.
\end{proof}

Now we will give descriptions of the languages $R^n$ and $R^nL$ in terms of forbidden subwords (called ``forbidden blocks'' in \cite{dynamicsbook}).
This description will be useful for proving the other two propositions.

\begin{lemma}\label{lem:Rn}
The set $R^n$ is all reduced words that do not contain the following subwords:
\begin{itemize}
\item $\Bb \Ar$
\item $\Ar\Bb$
\item all $\Bb \ar^k\Bb$ where $0\leq k\leq n$
\end{itemize}
\end{lemma}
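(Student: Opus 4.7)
I would prove this by induction on $n$, describing $R^n$ by its forbidden cyclic subwords.

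The base case $n = 0$ is immediate: $R^0 = G$ is defined by \goldC\ with $x = \Bb$ and $y = \ar$, so in a reduced word the only forbidden subwords are $\Bb\Ar$, $\Ar\Bb$, and $\Bb\Bb = \Bb\ar^0\Bb$ (the first two forbid $\Bb$ being adjacent to $\Ar$; the last forbids $\Bb$ being adjacent to $\Bb$). This agrees with the machine in \autoref{mcm:gold}.

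For the inductive step, I would use $R^{n+1} = R(R^n) = \{w \in G : \varphi(w) \in R^n\}$, where $\varphi: \bb\mapsto\bb\ar$ (so $\Bb \mapsto \Ar\Bb$). The argument will rest on two claims about $\varphi$ on inputs from $G$:
\begin{enumerate}
\item[(a)] For any $w \in G$, the reduced word $\varphi(w)$ avoids $\Bb\Ar$ and $\Ar\Bb$.
\item[(b)] For any $w \in G$ and any $k \geq 0$, $\varphi(w)$ contains $\Bb\ar^k\Bb$ as a cyclic subword if and only if $w$ contains $\Bb\ar^{k+1}\Bb$.
\end{enumerate}
Given (a) and (b), the inductive hypothesis says $\varphi(w) \in R^n$ iff $\varphi(w)$ avoids $\Bb\Ar$, $\Ar\Bb$, and $\Bb\ar^k\Bb$ for $0 \leq k \leq n$. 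By (a) the first two conditions are automatic; by (b) the others translate to $w$ avoiding $\Bb\ar^{j}\Bb$ for $1 \leq j \leq n+1$. Combined with $w \in G$ (which handles $j = 0$ and the $\Bb\Ar$, $\Ar\Bb$ patterns), this gives exactly the claimed list for $R^{n+1}$.

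Both claims reduce to a syllable-by-syllable analysis using the fact that in any $w \in G$ each $\Bb$ is a singleton syllable flanked by $\ar$-syllables of positive length, while each $\bb$-syllable is flanked by $A$-syllables (never by $\Bb$). The essential computations are: (i) $\varphi$ applied to a substring $\ar^{j_1}\Bb\ar^{j_2}$ (with minor adjustment for how the preceding $B$-syllable's image terminates) always produces a $\Bb$ flanked on both sides by $\ar$-syllables of positive length---never by $\Ar$---which proves (a); and (ii) $\varphi(\Bb\ar^{j+1}\Bb) = \Ar\Bb\ar^j\Bb$ after reducing, which establishes the ``if'' direction of (b). The main obstacle will be the ``only if'' direction of (b): verifying that two $\Bb$'s in $w$ separated by at least one $\bb$-syllable cannot be pushed together to become separated by only $\ar$'s in $\varphi(w)$. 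This will follow from the observation that $\varphi(\bb^s) = (\bb\ar)^s$ always contains $\bb$'s, and in $w \in G$ these $\bb$'s cannot cancel against anything in $\varphi(w)$ (the bordering syllables in $w$ are $\ar$- or $\Ar$-syllables, never $\Bb$-syllables, and their images cannot swallow $\bb$), so they persist between the two $\Bb$'s and block the appearance of a $\Bb\ar^k\Bb$ substring.
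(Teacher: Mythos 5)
Your proof is correct and takes essentially the same approach as the paper: your claims (a) and (b) correspond exactly to the paper's observations that $\varphi$ cannot create $\Ar\Bb$ or $\Bb\Ar$ when $w\in G$ and that $\varphi$ shifts forbidden $\Bb\ar^k\Bb$ subwords down by one index, and both arguments proceed by induction on $n$. The only organizational difference is that the paper verifies the two inclusions separately --- handling the forward inclusion via the composite map $\bb\mapsto\bb\ar^n$ and the observation that $\Bb$'s cannot cancel --- whereas you derive both inclusions at once from the ``iff'' form of claim (b), which is a slightly cleaner packaging of the same reasoning.
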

\begin{proof}
  First we will show none of these subwords may appear.
  It follows from \goldC\ that $\Bb \Ar$, $\Ar\Bb$, and $\Bb \Bb$ cannot appear.
  The set $R^n$ can be given as:
  \begin{align}
  R^n = \left\{w \mid w\in G, \left(\bb \mapsto \bb \ar^n\right)(w) \in G\right\}
  \end{align}
  Suppose $w\in R^n$ contains a subword $\Bb \ar^k\Bb$.
  Its image under $\bb \mapsto \bb \ar^n$ must contain $\Bb \ar^{k-n}\Bb$, since the right $\Bb$ produces $\ar^{-n}$ on its left and neither $\Bb$ can cancel under an automorphism which only introduces new $\ar$ and $\Ar$ letters.
  If $k < n$, this image then contains $\Bb \Ar$ which is forbidden in $G$.
  If $k = n$, this image then contains $\Bb \Bb$ which is also forbidden in $G$.
  Therefore $k$ must be larger than $n$.
   
  Suppose $w$ is a cyclic word not containing $\Bb \Ar$, $\Ar\Bb$, or $\Bb \Bb$.
  Then $\Bb$ can only appear next to $\ar$ in $w$, so $w\in G=R^0$. We will proceed by induction on $n$.
  Suppose $w$ does not contain $\Bb \Ar$, $\Ar\Bb$, or $\Bb \ar^k\Bb$ for any $0\leq k \leq n$.
  We will show that if $\varphi$ is the automorphism $\bb \mapsto \bb \ar$, $\varphi(w)$ does not contain $\Bb \Ar$, $\Ar\Bb$, or $\Bb \ar^k\Bb$ for any $0\leq k \leq n-1$.
  Suppose by way of contradiction that it does contain one of these types of subword.
  If $\varphi(w)$ contains $\Ar\Bb$, $\varphi$ must have added an $\Ar$ that did not cancel, so $w$ must have had a subword of the form $\Ar\Bb$ or $\Bb \Bb$, contradicting our assumption.
  Similarly, if $\varphi(w)$ contains $\Bb \Ar$, either the $\Ar$ was already there in $w$ or it was added by $\varphi$. Therefore $w$ either contained a subword of the form $\Bb \Ar$ or $\Bb \Bb$, contradicting our assumption.
  Finally, if $\varphi(w)$ contains a subword of the form $\bb \ar^k\bb$ for $0\leq k \leq n-1$, $w$ must have contained a subword of the form $\bb \ar^{k+1}\bb$, again contradicting our assumption.
\end{proof}

\begin{lemma}\label{lem:RnL}
The set $R^nL$ is all reduced words in $R^n$ which do not contain any subwords of the form $\bb \Ar^k\bb$ where $k > n + 2$.
\end{lemma}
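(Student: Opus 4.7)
My plan is to unfold the definition of $R^nL$ and then analyze the reduction of $\varphi^{n+1}(w)$ in terms of the syllables of $w$. By \autoref{def:tree}, $w \in R^nL$ iff $w \in R^n$ and $\gamma\varphi^{n+1}(w) \in G$, equivalently iff $w \in R^n$ and $\varphi^{n+1}(w) \in \gamma(G)$. Since $\gamma$ swaps $\ar$ and $\bb$, $\gamma(G)$ is the set of reduced words satisfying \goldC\ with $x = \Ar$ and $y = \bb$; a check analogous to the one in the proof of \autoref{lem:Rn} shows this is exactly the set of reduced words containing none of $\Ar\Ar$, $\Ar\Bb$, or $\Bb\Ar$ as a subword. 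Throughout, the key is that $\varphi^{n+1}$ fixes $\ar$ and $\Ar$ while sending $\bb \mapsto \bb\ar^{n+1}$ and $\Bb \mapsto \Ar^{n+1}\Bb$.

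For the forward direction I would argue the contrapositive: if $w \in R^n$ contains a subword $\bb\Ar^k\bb$ with $k > n+2$, then substituting $\varphi^{n+1}$ into this local substring gives $\bb\ar^{n+1}\Ar^k\bb\ar^{n+1}$, which after the cancellation $\ar^{n+1}\Ar^k = \Ar^{k-n-1}$ becomes $\bb\Ar^{k-n-1}\bb\ar^{n+1}$. Since $k - n - 1 \geq 2$, this substring contains $\Ar\Ar$, and because the occurrence is already flanked by $\bb$ letters on both sides (which cannot cancel with $\Ar$), no boundary interaction with the rest of $\varphi^{n+1}(w)$ eliminates it. Hence $\varphi^{n+1}(w) \notin \gamma(G)$ and $w \notin R^nL$.

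For the backward direction I would assume $w \in R^n$ contains no $\bb\Ar^k\bb$ with $k > n+2$ and show $\varphi^{n+1}(w) \in \gamma(G)$ by classifying the two possible sources of an $\Ar$ letter in the image. First, an $\Ar$-syllable $\Ar^k$ of $w$: in $G$ such a syllable is necessarily flanked by $\bb$-syllables, so by hypothesis $k \leq n+2$; after cancellation with the $\ar^{n+1}$ contributed by the preceding $\bb$-image, it leaves behind either no $\Ar$ (if $k \leq n+1$) or exactly one $\Ar$ bracketed by $\bb$s (if $k = n+2$), both legal in $\gamma(G)$. Second, the $\Ar^{n+1}$ prefix of a $\Bb$-image, which I claim always cancels completely: in $G$ each $\Bb$-syllable is preceded by an $\ar$-syllable $\ar^m$, and the syllable before $\ar^m$ is either a $\bb$-syllable (whose $\varphi^{n+1}$-image supplies an extra $\ar^{n+1}$, so the combined $\ar$-feed of length $n+1+m \geq n+1$ absorbs $\Ar^{n+1}$) or another $\Bb$-syllable (in which case $R^n$ forces $m \geq n+1$, so $\ar^m$ alone absorbs $\Ar^{n+1}$).

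The main obstacle is source two in the presence of long chains $\Bb\ar^{m_1}\Bb\ar^{m_2}\cdots\Bb$, where absorbing the leftmost $\Ar^{n+1}$ may seem to require cooperation from far away. The resolution is that every interior gap $\ar^{m_i}$ in such a chain satisfies $m_i \geq n+1$ by $R^n$, so each interior $\Bb$-image's $\Ar^{n+1}$ prefix is consumed locally by its own preceding $\ar^{m_i}$; only the leftmost $\Bb$-image's $\Ar^{n+1}$ relies on an adjacent $\bb$-syllable for its extra $\ar^{n+1}$, and if the chain closes cyclically without any $\bb$-syllable the same $R^n$-bound makes the cyclic reduction work. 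After this observation the bookkeeping is routine, and I arrive at the claim that $\varphi^{n+1}(w) \in \gamma(G)$ exactly when every $\Ar$-syllable in $w$ has length at most $n+2$, which is precisely the stated condition.
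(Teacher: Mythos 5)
Your proof is correct and follows essentially the same approach as the paper: unfold the recursive definition so that $w \in R^nL$ iff $w \in R^n$ and $\varphi^{n+1}(w)$ avoids $\Ar\Ar$, $\Ar\Bb$, $\Bb\Ar$, push the forbidden subword $\bb\Ar^k\bb$ through $\varphi^{n+1}$ for the forward direction, and trace all possible sources of $\Ar$ in $\varphi^{n+1}(w)$ for the converse. Your backward direction is in fact a little more careful than the paper's: where the paper loosely asserts that the $\ar$-syllable preceding each $\Bb$ has exponent greater than $n$ (which \autoref{lem:Rn} only forces when that syllable lies between two $\Bb$s), you correctly split into the two cases $\bb\ar^m\Bb$ (where the image of $\bb$ contributes the extra $\ar^{n+1}$ needed) and $\Bb\ar^m\Bb$ (where $m \geq n+1$ is forced), which is what is actually required to see that every $\Ar^{n+1}$ prefix from a $\Bb$-image cancels.
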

\begin{proof}
  Let $\varphi$ be the automorphism $\bb \mapsto \bb \ar$.
  First, let us note that the set $R^nL$ can be given as
  \begin{equation*}
    \begin{split}
      R^nL
        &= \{ w \mid w \in G, \varphi^n(w) \in L\} \\
        &= \{ w \mid w \in G, \varphi^n(w) \in G, \gamma(\varphi^{n+1}(w)) \in G \}.
    \end{split}
  \end{equation*}
  This shows immediately that if $w$ is not in $R^n$ then it is not in $R^nL$.
  
  Next, suppose $w$ contains a subword of the form $\bb \Ar^k\bb$ for some $k > n + 2$.
  Let $\psi$ be the automorphism $\gamma\circ\varphi^{n+1}$.
  Then $\psi(\bb \Ar^k\bb)$ contains $\ar\Bb^{k - n - 1}\ar$ as a subword.
  Since $k > n + 2$, the exponent on $\Bb$ in the image of the subword must be at least two, which means that $\psi(w)$ contains $\Bb \Bb$ as a subword and so is not in $G$.
  Therefore if $w$ contains a subword of the form $\bb \Ar^k\bb$, it is not in $R^nL$.

  Suppose, on the other hand, that $w \in R^n$ and $w$ does not contain a subword of the form $\bb \Ar^k\bb$ where $k > n + 2$.
  To show that $w$ is in $R^nL$, all we need to do is show that $\gamma(\varphi^{n+1}(w)) \in G$.
  This is equivalent to showing that $\Ar\Bb$, $\Bb \Ar$, and $\Ar\Ar$ do not appear as subwords in $\varphi^{n+1}(w)$.
  Any occurrence of $\Ar$ in $\varphi^{n+1}(w)$ must have already been in $w$.
  Indeed, any new $\Ar$ must be produced from an $\Bb$, however each $\Bb$ must be adjacent on both sides to $\ar$ syllables, and by \autoref{lem:Rn} the exponent on these syllables must be greater than $n$.
  Since $w \in G$, it cannot contain $\Bb \Ar^m\bb$ or $\bb \Ar^m\Bb$ as subwords, and so any occurrence of $\Ar$ in $w$ must be in a subword of the form $\bb \Ar^k\bb$ for $k > 0$.
  Thus, so long as $k \leq n + 2$ for any such subwords of $w$, $\Ar\Ar$ cannot be a subword of $\varphi^{n+1}(w)$, and so $w$ is in $R^nL$.
\end{proof}

\begin{proof}[Proof of \autoref{prop:Rlim}]
It follows from \autoref{lem:Rn} that the union of all $R^n$ are the words not containing $\Ar\Bb$, $\Bb \Ar$, or $\Bb \ar^k\Bb$ for any $k \geq 0$.

It is clear from \autoref{machine:lb} that none of these subwords can appear in $R^\infty$.

Suppose $w$ does not contain $\Ar\Bb$, $\Bb \Ar$, or $\Bb \ar^k\Bb$ for any $k \geq 0$. We will show that the machine in \autoref{machine:lb} produces $w$. From our assumptions it follows that every $\Bb$ in $w$ is preceded and followed by $\ar$ and that $\Ar$ cannot appear before or after $\Bb$. Finally, a $\ar$ that occurs after an $\Bb$ cannot be followed by an $\Bb$. Thus we see that indeed $w$ is a path in the machine.
\end{proof}

\begin{proof}[Proof of \autoref{prop:dont go left}]
Let us consider the cyclic words in $R^nL$.
We will define a function $f$ which takes a generic subset of $R^nL$ and assigns each word to a word of equal length in $R^\infty$.
This will show that $\mathfrak g_k\left(R^nL\right)\in O\left(\mathfrak g_k\left(R^\infty\right)\right)$, because such an encoding would show that the chosen subset grows as $R^\infty$ and the remainder is negligible and so does not effect the asymptotic growth.

There are two differences between the machines $R^nL$ and $R^\infty$:
\begin{itemize}
\item $R^nL$ forbids all subwords of the form $\bb \Ar^{n+2+k}\bb$ where $k$ is a positive integer, while $R^\infty$ does not.
\item $R^\infty$ forbids all subwords of the form $\Bb \ar^{n+k}\Bb$ where $k$ is a positive integer, while $R^nL$ does not.
\end{itemize}
All other subwords (not containing any of the above subwords) are either illegal in both machines, or legal in both machines.

Our function $f$ will then replace segments of the form:
\begin{align*}
\bb \ar^{k_0}\Bb \ar^{n+k_1} \Bb \ar^{n+k_2}\Bb \dots\Bb \ar^{n+k_{s-1}}\Bb \ar^{k_s} \bb
\end{align*}
where $s\geq2$ and $k_0, \dots, k_s$ are positive integers, with the following:
\begin{align*}
\bb \ar^{k_0-1} \bb \Ar^{n+k_1+2} \bb \Ar^{n+k_2} \bb \dots \bb \Ar^{n+k_{s-1}}\bb \ar^{k_s-1}\bb
\end{align*}
We present some observations.
\begin{itemize}
\item The segments we are replacing are legal in $R^nL$ but illegal in $R^\infty$.
\item It is impossible for segments of the first form to overlap, and thus there is no ambiguity with the replacement scheme.
\item So long as a word in $R^nL$ contains $\bb$, all its subwords that are illegal in $R^\infty$ must be of this form.
\item The image after replacement in $f$ is a string which is legal in $R^\infty$, but illegal in $R^nL$.
\item The segment and its image have the same length. 
\end{itemize}
If we consider the subset of words in $R^nL$ which do not contain $\bb$, we can see that they must also not contain $\Ar$, and thus they can grow at most $O\left(2^\ell\right)$.
This makes them negligible in the total set $R^nL$, which certainly grows faster than that.
Therefore, if we let $R^nL'$ be the set of words which are in $R^nL$ and contain $\bb$, we have that the growth of $R^nL'$ is asymptotically equivalent to $R^nL$.

The function $f : R^nL' \rightarrow R^\infty$ is an injective function which maintains the length of the input.
Thus we have

\begin{align*}
\growth\left(R^nL\right) \sim \growth\left(R^nL'\right)\in  O(\growth\left(R^\infty\right))
\end{align*}

Furthermore the subword $v=\ar\bb\Ar^n\bb\Ar^{n+3}\bb$ cannot appear in the image of any $w\in R^nL'$ under $f$.
This is because $v$ is illegal in $R^nL$, but cannot appear as the subword of a substituted segment since the first $\Ar$ syllable of such a segment always has exponent $> n+2$, and each other $\Ar$ syllable must be preceded by $\Ar\bb$ (while this is preceded by $\ar\bb$).

Since $R^\infty$ is a language produced by a irreducible mixing automaton, it is ``growth sensitive'' (see \cite{CSW}) and thus excluding any subword gives an exponentially negligible subset.
Thus $\growth_k\left(R^nL'\right)\in O(\varepsilon^k\growth_k(R^\infty))$ for some $\varepsilon < 1$, and so
\begin{align*}
\growth\left(R^nL\right) \sim \growth\left(R^nL'\right)\in  o(\growth\left(R^\infty\right))
\end{align*}

and we are done.
\end{proof}

That gives us the weaker claim in \autoref{thm:main}, now we will prove the slightly stronger version.
A strengthening of \autoref{prop:dont go left} will be as follows:

\begin{lemma}\label{lem:Rinf nonneg}
$R^\infty$ is not negligible in $\PP 2$
\end{lemma}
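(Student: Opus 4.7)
The plan is to derive the lemma directly from the two matching asymptotics we already have. From \autoref{thm:lowerboundF2} we know $\mathfrak{g}_n(R^\infty)\in\Theta(\lambda_1^n)$, and from \autoref{thm:main} we know $\mathfrak{g}_n(\PP{2})\in\Theta(\lambda_1^n)$. Combined with the observation that $R^\infty\subseteq\PP{2}$---the automaton in \autoref{machine:lb} is reduced, so its closed paths lift to cyclically reduced words, and those words are potentially positive by \autoref{thm:lowerboundF2}---these two facts pin the growth of the subset and the growth of the ambient set to the same exponential base.

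Concretely, I would fix constants $0<c\leq C$ such that $c\lambda_1^n\leq \mathfrak{g}_n(R^\infty)$ and $\mathfrak{g}_n(\PP{2})\leq C\lambda_1^n$ for all sufficiently large $n$, and then bound the density function from below:
\[
\frac{\mathfrak{g}_n(R^\infty)+1}{\mathfrak{g}_n(\PP{2})+1} \;\geq\; \frac{c\lambda_1^n}{C\lambda_1^n+1} \;\longrightarrow\; \frac{c}{C} \;>\; 0.
\]
Hence the density is bounded below by a positive constant in the limit, so it cannot tend to $0$, and $R^\infty$ is not negligible in $\PP{2}$.

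There is essentially no obstacle in the proof itself: all the hard content lives in \autoref{thm:main} and \autoref{thm:lowerboundF2}, and the present lemma is the direct translation of those two matching $\Theta$-class bounds into the language of density. The only small thing to verify is the set-theoretic inclusion $R^\infty\subseteq\PP{2}$, which follows because the machine is both reduced (giving cyclic reducedness of closed-path images) and already shown to produce only potentially positive words. The lemma is worth recording because it exhibits a concrete \emph{regular} subset of $\PP{2}$ carrying a positive asymptotic density, strengthening the mere growth statement of \autoref{thm:main}.
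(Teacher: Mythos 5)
Your argument is correct and there is no circularity: \autoref{thm:main} and \autoref{thm:lowerboundF2} are proved earlier in the paper and neither depends on \autoref{lem:Rinf nonneg}, so reading non-negligibility off from the matching $\Theta$-classes $\mathfrak g_n(R^\infty)\in\Theta(\lambda_1^n)$ and $\mathfrak g_n(\PP 2)\in\Theta(\lambda_1^n)$ together with the inclusion $R^\infty\subseteq\PP 2$ is a legitimate deduction (the two-sided constants from $\Theta$ bound the density function below by a positive constant for large $n$, so its limit cannot be $0$). However, this is genuinely different from the paper's own proof, which never invokes \autoref{thm:main}. The paper instead extends the combinatorial technique of \autoref{prop:dont go left}: it exhibits an explicit injection from $\bigcup_{n} R^n L'\setminus R^\infty$ into $R^\infty$ that increases word length by at most a bounded amount, the new ingredient over \autoref{prop:dont go left} being the designation of one forbidden block in each word as a ``signal'' encoding the relevant $n$, which keeps the substitution scheme invertible when all the $R^nL$ are pooled together. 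Your route is shorter once \autoref{thm:main} is available; the paper's route is self-contained, avoids the tree-layer machinery entirely, and the latitude in choosing the signal segment is precisely what the authors then point to as (inconclusive) evidence in the subsequent discussion of whether $R^\infty$ might even be generic in $\PP 2$. Both proofs are valid.
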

\begin{proof}
More precisely, we will be showing that $R^\infty$ is not negligible in $\bigcup_{n=0}^\infty R^nL$.
This is sufficient since $R^nL \supsetneq \PP 2$.
As we did in the proof of \autoref{prop:dont go left}, we will discard all words which do not contain a $\bb$, since, by the same reasoning, these words are negligible in the total.

Our strategy will be to provide an injection $f :\left(\bigcup_{n=0}^\infty R^nL'\setminus R^\infty\right) \rightarrowtail R^\infty$ which increases the length by at most a constant bound.
More formally there exists a constant $c$ such that for every $w$ in the domain, $|w| + c \geq |f(w)|$.
This is sufficient because $R^\infty$ grows exponentially, i.e.\ $\mathfrak g_n(R^\infty) \in \Theta(\lambda^k)$.
Thus
\[
\mathfrak g_{k+c}(R^\infty)\sim\lambda^c \mathfrak g_k(R^\infty) \in \Theta(\lambda^k)
\]
To define $f$ consider some word $w$ in the domain.
$w$ belongs to $R^nL$ for some $n$.
If $w$ belongs to multiple $R^nL$, let us take $n$ to be the largest value possible.
With the selected $n$, the subwords forbidden in $R^\infty$ are of the form
\[
\bb\ar^{k_0}\Bb\ar^{n+k_1}\Bb\ar^{n+k_2}\Bb\dots\Bb\ar^{n+k_{s-1}}\Bb\ar^{k_s}\bb
\]
where $s\geq 2$ and each $k_i$ is positive.
We will selected one of these subwords arbitrarily to be the ``signal segment''.
All other of these forbidden subwords we will replace with:
\[
\bb\ar^{k_0-1}\bb\Ar^{n+k_1+2}\bb\Ar^{k_2}\bb\dots\bb\Ar^{k_{s-1}}\bb\ar^{k_s-1}\bb
\]
This type of replacement cannot increase the length of the word.
For the signal segment we will replace it with the form:
\[
\bb\ar^{k_0-1}\bb\Ar\bb\Ar^{n+3}\bb\Ar^{k_1}\bb\Ar^{k_2}\bb\dots\bb\Ar^{k_{s-1}}\bb\ar^{k_s-1}\bb
\]
This type of replacement can increase the length, but by at most 4 (this happens when $s=2$).

In order to show that this is an injection we will provide a way to recover the word $w$ from the image.
We first can use the signal to recover the $n$ used in the encoding.
We find all subwords of the form $\bb\ar^i\bb\Ar\bb\Ar^{m+3}\bb$ for non-negative $i$ and $m$.
One such subword must be from the encoded signal, and none can come from other encoded forbidden words as all of those would require the first $\Ar$ syllable to have a exponent of at least $3$.
It is possible for there to be multiple subwords, but other than the signal must have been present in the initial word, so our signal is the one with the largest exponent.
Once we have recovered $n$ is is trivial to reverse the encoding on the signal segment, but we can also determine the locations of all the non-signal segments and reverse their encodings as well.
Each encoded subword must contain a word of the form $\bb\Ar^{n+k+2}\bb$ for positive $k$, and such segments are forbidden in $R^nL$, so they cannot appear elsewhere.
Thus we can reverse the encoding and $f$ is injective.

Since the only replacement which can increase the length is the signal encoding which happens exactly once, the total increase in length is at most 6.
\end{proof}

The natural extension to this question is whether $R^\infty$ is generic in $\PP 2$.
In the encoding we have provided there is a choice as to which segment is the signal.
This indicates that there may be a degree of wiggle room, suggesting $R^\infty$ could be generic.
However, the cases in where there is a choice requires multiple of the forbidden segments, which turns our to be quite rare.
In fact, counter to this initial intuition, computer simulations suggest that $R^\infty$ is not generic, with a fixed density at very roughly $1/5$.
(Uniform sampling of potentially positive words is difficult especially for larger words.
for this we sampled approximately 100 words of length about 300)

For now we offer no proof in either direction, but we will offer evidence that $R^\infty$ is the correct set to consider for this question.

\begin{theorem}\label{thm:no gen wo Rinf}
  If there is a finite strongly connected automaton which produces a generic subset of $\PP 2$, $R^\infty$ produces a generic subset of $\PP 2$.
\end{theorem}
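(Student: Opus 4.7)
The plan is to invoke the standard zero-one law for irreducible sofic shifts, which collapses the proof into a clean dichotomy. Let $A$ denote the given finite strongly connected automaton, and write $\mathcal L(A)$ for the set of cyclically reduced words it produces.

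First, I would pin down the asymptotic growth rates. Since $\mathcal L(A) \subseteq \PP 2$ and $\mathcal L(A)$ is generic in $\PP 2$, combined with \autoref{thm:main} we have $\mathfrak g_n(\mathcal L(A)) = \Theta(\lambda_1^n)$, so the Perron eigenvalue of $A$ equals $\lambda_1$. Applying Perron--Frobenius to the irreducible adjacency matrix of $A$ (working along period classes if $A$ is not aperiodic) yields $\mathfrak g_n(\mathcal L(A)) = c_A \lambda_1^n(1+o(1))$ for some $c_A > 0$, so by genericity $\mathfrak g_n(\PP 2) \sim c_A \lambda_1^n$. Similarly, the mixing automaton of \autoref{machine:lb} yields $\mathfrak g_n(R^\infty) = c_R \lambda_1^n(1+o(1))$ with $c_R > 0$.

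Next, set $Y = \mathcal L(A) \cap R^\infty$, which is a sofic subshift of $\mathcal L(A)$ (as the intersection of two sofic shifts is sofic). By the classical zero-one law for irreducible sofic shifts --- any proper sofic subshift has strictly smaller entropy, see \cite{dynamicsbook} --- either $Y = \mathcal L(A)$ or $h(Y) < \log\lambda_1$. In the first case, $\mathcal L(A) \subseteq R^\infty$, and since $\mathcal L(A)$ is generic in $\PP 2$, so is its superset $R^\infty$, completing the proof.

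The main obstacle is excluding the second case. There, $\mathfrak g_n(Y) = o(\lambda_1^n)$, so $\mathfrak g_n(\mathcal L(A) \setminus R^\infty) \sim c_A \lambda_1^n$. But $\mathcal L(A) \setminus R^\infty$ and $R^\infty$ are disjoint subsets of $\PP 2$, whence
\begin{equation*}
  \mathfrak g_n(\PP 2) \;\geq\; \mathfrak g_n(\mathcal L(A)\setminus R^\infty) + \mathfrak g_n(R^\infty) \;\sim\; (c_A + c_R)\lambda_1^n,
\end{equation*}
contradicting $\mathfrak g_n(\PP 2) \sim c_A\lambda_1^n$ since $c_R > 0$. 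Thus only the first alternative can occur, and $R^\infty$ is generic in $\PP 2$.
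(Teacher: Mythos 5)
Your proof is correct in substance and reaches the paper's conclusion via a genuinely different decomposition. The paper works by contradiction: assuming $R^\infty$ is not generic, it first shows $R^\infty\not\subseteq \mathcal L(A)$ (else \cite{CSW} would force $\mathcal L(A)$ to out-grow $\PP 2$), then argues that $P=\mathcal L(A)\cap R^\infty$ would be a generic \emph{proper} subset of $R^\infty$, which \cite{CSW} forbids. You instead view $Y=\mathcal L(A)\cap R^\infty$ as a subshift of $\mathcal L(A)$ and dichotomize on whether $Y=\mathcal L(A)$; the non-trivial case is killed by a disjoint-union count against $\mathfrak g_n(\PP 2)$. The key ingredient is the same in both (a proper sofic subshift of an irreducible shift has strictly smaller entropy, cited in the paper as the main result of \cite{CSW}), as is the reliance on $R^\infty$ being non-negligible in $\PP 2$; your approach has the small advantage of directly deducing $\mathcal L(A)\subseteq R^\infty$ in the good case and avoiding the paper's questionable aside that the intersection of two strongly connected automata is strongly connected. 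One imprecision worth flagging: you claim $\mathfrak g_n(\mathcal L(A))=c_A\lambda_1^n(1+o(1))$ via Perron--Frobenius, but for an arbitrary (possibly non-deterministic, non-one-to-constant) strongly connected automaton this precise asymptotic holds for \emph{path} counts, not automatically for \emph{word} counts --- the paper's \autoref{thm:irreducible eigenval} only delivers a $\Theta(\lambda_1^n)$ bound. The fix is cheap: your counting step in the second case goes through with $\Theta$ and $\Omega$ bounds alone, since $\mathfrak g_n(R^\infty)-\mathfrak g_n(Y)\leq \mathfrak g_n(\PP 2)-\mathfrak g_n(\mathcal L(A))=o\bigl(\mathfrak g_n(\PP 2)\bigr)=o(\lambda_1^n)$ forces $\mathfrak g_n(Y)=\Omega(\lambda_1^n)$, contradicting $h(Y)<\log\lambda_1$, so the constants $c_A,c_R$ are unnecessary.
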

\begin{proof}
Let $A$ be strongly connected automaton which produces a generic subset of $\PP 2$.
Assume by way of contradiction that $R^\infty$ is not generic in $\PP 2$, and so by \autoref{lem:Rinf nonneg} the density of $R^\infty$ converges to some constant value between 0 and 1.

$A$ cannot completely contain $R^\infty$, since by \cite{CSW}*{Main result} $A$ would grow faster than $R^\infty$.
Since $R^\infty$ grows like $\PP 2$ by \autoref{lem:Rinf nonneg}, this would imply that $A$ grows faster than the set in which in which it supposedly is contained.

Now let us consider the intersection $P=A \cap R^\infty$.
Since $R^\infty$ is non-negligible and $A$ is generic, $P$ is a generic proper subset of $R^\infty$.
However, the intersection of two sets produced strongly connected automata is also produced by a strongly connected automaton, so by \cite{CSW}*{Main result} the intersection cannot be generic in $R^\infty$.
\end{proof}

\section{An automaton for the potentially positive words}

In our proof of \autoref{thm:no gen wo Rinf}, we provide an argument which can show that there is no finite strongly connected automaton which produces exactly the potentially positive words.

In light of this we if we can produce $\PP 2$ using a more general type of automaton.
Most directly we might lift the requirement that words are produced by closed loops allowing specified start and end states and considering the images of all paths from a start state to an end state.
(Our closed loop requirement can be translated to such an automaton by creating one copy of the automaton for each node and a node in each copy as the start and end state.)
Such an automaton is called a ``finite state machine'' (FSM) and a language produced by an FSM is called ``regular''.

\begin{question}\label{q:isreg}
  Is $\PP{r}$ a regular language for any $r\geq 2$?
\end{question}

For those whose background is mainly in dynamics, this is equivalent to asking if the shift of potentially positive words is {\it sofic}.

If the answer to these questions were ``Yes'', it would mean we could implement an algorithm to identify cyclically-reduced potentially positive words of length $n$ with worst-case time complexity $O(n)$ and constant space.
Indeed this is an enticing prospect.

The reader may have already guessed that, since this is posed as a question rather than a proposition, this dream will not come to pass.
We will be showing that no automaton can exist.
In fact, we will be showing a more general result.

Finite state machines are equivalent to Turing machines without a memory tape.
Hence they are called {\it finite} state machines, they are a a computation model with finite, i.e.\ bounded by a constant, memory.
We already know from \cite{KHOO} that a deterministic Turing machine can recognize $\PP 2$.
Thus unrestricted memory is enough.
In fact the algorithm provided uses only linear space ($\mathrm{DSPACE}(n)$), and so $\PP 2$ can be identified by a Turing machine with a linear bound on its memory.
Such an automaton is called a ``linear bounded automaton'', and a language which can be recognized by one is called ``context-sensitive''. 
Other classes of automata exist between linear bounded automata and finite sate machines.
For example context-free languages are those which can be identified with a push down automaton.

\begin{theorem}\label{thm:noCFL}
For $r\geq 2$, $\mathrm{PP}_r$ is not a well-nested multiple context-free language.
\end{theorem}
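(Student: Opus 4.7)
The plan is to derive a contradiction from the pumping lemma for well-nested multiple context-free languages (wnMCFLs). Every wnMCFL $L$ of dimension $d$ satisfies such a lemma: there is a constant $N$ so that every $w \in L$ with $|w| > N$ admits a factorization $w = u_0 v_1 u_1 v_2 u_2 \cdots v_{2d} u_{2d}$ with $v_1 \cdots v_{2d} \neq \varepsilon$, with the $v_i$ organized into $d$ nested pairs, and with $u_0 v_1^k u_1 \cdots v_{2d}^k u_{2d} \in L$ for every $k \geq 0$. The strategy is to find witnesses in $\PP r$ for which every admissible well-nested pumping eventually violates potential positivity.

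It suffices to treat the case $r = 2$: since wnMCFLs are closed under intersection with regular languages, and since potential positivity of an $F_2$-word is intrinsic to $F_2$ (a standard Whitehead-type observation), one has $\PP r \cap \{x_1^{\pm 1}, x_2^{\pm 1}\}^\ast = \PP 2$, so it suffices to show that $\PP 2$ is not a wnMCFL. For the witness family I would take long words whose potential positivity depends rigidly on equality of syllable exponents between matched pairs. A natural candidate is $w_M = (\bb \ar^M \Bb \ar^M)^M$, which one checks is in $\PP 2$ by iterating the automorphism $\ar \mapsto \ar \bb$ (each period $\bb \ar^M \Bb \ar^M$ reduces to a positive block $\bb (\ar \bb)^{M-1} \ar^2 \bb (\ar \bb)^{M-1}$). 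For $M$ much larger than both $N$ and $d$, the pumping lemma confines the $v_i$'s to a short window of bounded length, touching only $O(1)$ of the $M$ periods; pumping at level $k$ then perturbs the structure of those few periods while leaving the remaining $M - O(1)$ periods with the exact profile $\bb \ar^M \Bb \ar^M$ intact.

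The core step is showing that this perturbation escapes $\PP 2$ for large $k$. Because \goldC alone is only necessary, this requires the finer machinery of Section 5. I would invoke \autoref{lem:Rn}, \autoref{lem:RnL}, and \autoref{prop:Rlim} to show that the pumped word contains a persistent forbidden block (for instance $\Bb \ar^j \Bb$ with $j$ arbitrarily large, which is forbidden in every $R^n$), placing it outside $\bigcup_n R^n$; combining this with the symmetric argument under $\bb \mapsto \bb \Ar$ forces the word out of every $R^n L$ in the tree of \autoref{figure:tree}, and hence out of $\PP 2$ by the same reasoning underlying the proof of \autoref{thm:main}. The well-nested restriction is essential at this step: without it a general MCFL pumping could pair two segments crossing the cyclic word in a way that restores exponent equality across matched syllables, but well-nested pumps act only on nested intervals and so must leave a detectable asymmetry. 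The main obstacle I anticipate is the combinatorial case analysis of well-nested placements — one must enumerate how the $d$ nested pump-pairs can sit inside $w_M$ and verify that each placement produces a persistent forbidden block at sufficiently large $k$. This may require refining the witness family so that its Goldstein alignment is essentially unique and no choice of $d$ nested pairs can, even at the cost of symmetric compensation, preserve the rigid period structure needed for membership in $\PP 2$.
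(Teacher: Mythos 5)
Your reduction from rank $r$ to rank $2$ is correct and matches the paper (the same \cite{GC} result and \autoref{lem:intersection} are used), but the core step --- ruling out $\PP 2$ as a well-nested MCFL --- has a fatal gap: your witness family $w_M = (\bb\ar^M\Bb\ar^M)^M$ does admit a legal pump. Take $w_1 = \bb\ar^M\Bb\ar^M$, all other $w_i = \varepsilon$, $v_0 = \varepsilon$, $v_1 = (\bb\ar^M\Bb\ar^M)^{M-1}$; then the pumped words are $(\bb\ar^M\Bb\ar^M)^{M-1+n}$, and by your own calculation (a single application of $\ar\mapsto\ar\bb$ yields $\bigl(\bb(\ar\bb)^{M-1}\ar^2\bb(\ar\bb)^{M-1}\bigr)^{M-1+n}$, a positive word), every one of these is in $\PP 2$. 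So no contradiction. Moreover, your claim that the pumping lemma ``confines the $v_i$'s to a short window of bounded length'' is not something the lemma gives you --- the $w_i$'s may sit anywhere in the word, and a periodic witness invites exactly the period-pump above. Your worry that one ``must enumerate how the $d$ nested pump-pairs can sit inside $w_M$'' is real, and for this witness the enumeration does not terminate in a contradiction.

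The missing idea is that the paper uses \autoref{lem:intersection} a \emph{second} time, with a regular language $\Lambda_k$ tailored to the pumping dimension $k$ of the hypothetical recognizer. $\Lambda_k$ fixes the number of $\Bb$ syllables at $k+1$, so pumping cannot create or destroy periods without immediately leaving $\Lambda_k$; this forces every $w_i$ into one of $k+1$ exponent slots, and then pigeonhole plus the constraint $n_i \geq m-1$ (derived from \goldC\ and \autoref{lemma:switch}, not the $R^n/R^nL$ machinery you invoke) yields the contradiction. Your plan to bring in \autoref{lem:Rn}, \autoref{lem:RnL}, and \autoref{prop:Rlim} is both unnecessary and, as written, not actually an argument: you have not exhibited a persistent forbidden block in the pumped word, only asserted that one should appear. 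Without the intersection-with-$\Lambda_k$ step to pin the anchors, there is no way to make that assertion stick.
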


Well-nested multiple context-free languages (well-nested MCFLs) are a class of languages containing all regular languages.
Well-nested MCFLs are not a widely known class of languages, and so it is very likely that readers of this paper will be unfamiliar with them.
We will not make any attempt to change that.
A definition can be found in \cite{MCFLpump}, but our proof will not use the technical details of the definition.
Since the regular languages are contained within the well-nested MCFLs, \autoref{thm:noCFL} will imply the answer to \autoref{q:isreg} is ``No''.
In fact, because we avoid the technical details and instead use existing results, the reader can substitute ``well-nested MCFL'' for ``regular'' and still completely understand the proof.
In fact other well known families of languages such as context-free languages, ETOL languages, and tree-adjoining languages are also contained within the well-nested MCFLs, and can be substituted in the same way.

We will establish the two instrumental lemmata about well-nested MCFLs:

\begin{lemma}[Pumping lemma for well-nested MCFLs \cite{MCFLpump}]
  \label{lem:pump}
  If $L \subseteq \Sigma^\ast$ is a well-nested MCFL, there exists some $k \geq 1$ (called the \defnsb{pumping dimension} of $L$) and some $p\geq 1$ (called the \defnsb{pumping length} of $L$) such that, every string $s\in L$ with $|s|_\Sigma\geq p$ can be written as
  \[
  s=v_0\,w_1\,v_1\,w_2\,v_2\,\dots\,w_k\,v_k
  \]
  where:
  \begin{itemize}
      \item $\left\{w_i\mid 1\leq i\leq k\right\}\neq \{\varepsilon\}$
      \item $\forall n\in \mathbb{Z}_{\geq 0}:v_0\,w_1^n\,v_1\,w_2^n\,v_2\,\dots\,w_k^n\,v_k \in L$
  \end{itemize}
\end{lemma}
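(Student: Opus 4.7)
The plan is to follow the template of the Bar--Hillel--Perles--Shamir proof of the context-free pumping lemma, adapted to the derivation trees of a well-nested multiple context-free grammar (MCFG). Fix a well-nested MCFG $G$ generating $L$, let $d$ be the maximum dimension of any nonterminal of $G$ (so each nonterminal derives tuples of at most $d$ strings that are later composed into a single word at the root), and let $N$ be the number of nonterminals. The pumping dimension $k$ in the conclusion will be the dimension of the particular nonterminal we extract, so $k\leq d$, and the pumping length $p$ will be chosen large enough to force the pigeonhole step below.

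First I would set up the tree-surgery framework. Because productions of $G$ have bounded right-hand sides, derivation trees have bounded branching, so for $s \in L$ with $|s|$ sufficiently large every derivation tree for $s$ contains a root-to-leaf path of depth exceeding $N$. By pigeonhole some nonterminal $A$ then appears twice on such a path; call the higher occurrence \emph{outer} and the lower one \emph{inner}.

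Second I would perform the pumping iteration. The outer occurrence of $A$ derives a tuple $(\alpha_1,\dots,\alpha_k)$ of strings; the inner occurrence derives a sub-tuple $(\beta_1,\dots,\beta_k)$. Replacing the inner subtree by a copy of the outer subtree yields another valid derivation in $G$, and this substitution can be iterated $n$ times. The composition rules along the root-to-$A$ path then assemble $s$ as $v_0\,w_1\,v_1\,\dots\,w_k\,v_k$, where the $w_i$ are precisely the components whose counts grow under iteration. This is where the well-nested hypothesis enters: well-nestedness forbids productions that combine string components in a crossing pattern, so the $k$ components contributed by the outer $A$ appear as disjoint contiguous factors of $s$ in order, giving the $2k{+}1$-part decomposition required by the conclusion. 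Without well-nestedness the $w_i$ could appear interleaved and no such contiguous splitting would exist. Ensuring that not all $w_i$ are empty is the usual surgery trick: among all derivation trees for $s$ choose one of minimum size and pick the deepest repetition $A$ on a longest path; if all $w_i$ were empty one could excise the segment of the derivation between the outer and inner $A$ without changing $s$, contradicting minimality.

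The main obstacle is the second step. Full MCFLs (without the well-nested restriction) permit crossing compositions and admit no pumping lemma of this clean contiguous-factor form, so the argument has to extract and genuinely use the well-nested hypothesis at exactly the moment one asserts that the $k$ string components of $A$ appear in $s$ in order with no interleaving. Making this bookkeeping precise while simultaneously keeping $k$ bounded by $d$, and choosing $p$ just large enough for the path-depth pigeonhole, is the delicate combinatorial heart of the argument.
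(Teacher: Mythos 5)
This statement is not proved in the paper: it is imported wholesale from \cite{MCFLpump} (Kanazawa's pumping lemma for well-nested MCFLs), and the authors treat it as a black box. So there is no ``paper's own proof'' to compare against; what you have written is a sketch of how the cited result could be established from first principles.

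Your sketch follows the right contour --- a Bar--Hillel-style pigeonhole on a long root-to-leaf path, subtree surgery at a repeated nonterminal, and a minimality argument to rule out the all-empty case --- and you correctly identify that well-nestedness is the hypothesis that prevents the pumped components from interleaving and thereby yields the contiguous $v_0\,w_1\,v_1\cdots w_k\,v_k$ decomposition. Two concrete issues remain. First, your claim that the pumping dimension $k$ equals the dimension of the repeated nonterminal is off by a factor of two: if $A$ has dimension $m$ and the outer occurrence derives $(\alpha_1,\dots,\alpha_m)$ with the inner deriving $(\beta_1,\dots,\beta_m)$, then well-nestedness places each $\beta_j$ contiguously inside some $\alpha_i$ as $\alpha_i = u_i\beta_j v_i$, and it is the $u_i$ and $v_i$ that pump --- so $k = 2m$, not $k = m$. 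This matters for the companion \autoref{lem:intersection}, which asserts that the pumping dimension is preserved under intersection with a regular language, and that constant is used explicitly in the subsequent non-MCFL argument. Second, the step ``the composition rules along the root-to-$A$ path then assemble $s$ as $v_0 w_1 v_1 \cdots w_k v_k$'' is exactly where Kanazawa's proof does real work: one has to track, for every production on that path, how the tuple components of the repeated nonterminal are slotted into the argument positions of the enclosing productions and argue by induction that the non-crossing property of well-nested composition forces the $2m$ pumped strings to land in $s$ in order and without overlap. You flag this as ``the delicate combinatorial heart'' and leave it unproved, which is honest, but it is also the part that actually distinguishes well-nested MCFLs from general MCFLs, where the analogous statement is false. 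As a reading guide to \cite{MCFLpump} the sketch is serviceable; as a self-contained proof it is not complete.
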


Naturally this lemma applies to all weaker classes of language.
However, a much simpler pumping lemma exists for regular languages.
See \cite{HopcroftUllman}*{Thm.\ 4.1}.\footnote{For context-free languages see \cite{HopcroftUllman}*{Thm.\ 7.18}. For tree adjoining languages see \cite{TAG}*{Thm.\ 4.7}.}
Both can be used to exclude $\mathrm{PP}_2$ from the respective classes using the same techniques we will employ.

\begin{lemma}[\cite{SeikiKato}*{Prp.\ 6}\footnote{The presentation in \cite{SeikiKato} significantly differs from the presentation here. One can refer to \cite{MCFLpump}*{pg.\ 323} which reports the result in a more harmonious way.}]
\label{lem:intersection}
For each well-nested MCFL $L_1 \subseteq \Sigma^\ast$ there is a $k$ such that for every regular language $L_2\subseteq \Sigma^\ast$, $L_1\cap L_2$ is a well-nested MCFL with pumping dimension $k$.
\end{lemma}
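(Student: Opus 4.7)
The plan is to establish the lemma via the standard product construction between a grammar generating $L_1$ and a finite automaton recognizing $L_2$, and to observe that this construction preserves both well-nestedness and the maximum dimension of the grammar. Since the pumping dimension from \autoref{lem:pump} can be read off as a function of the grammar's dimension alone, the resulting uniform bound depends only on $L_1$.

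First I would fix a well-nested multiple context-free grammar $G$ generating $L_1$, writing $d$ for the largest number of string components produced by any non-terminal of $G$ (its dimension). Let $M = (Q, \Sigma, \delta, q_0, F)$ be a DFA recognizing $L_2$. Each production of $G$ has the form $A \to f(B_1, \dots, B_n)$, where $f$ is a string function that assembles the components of the $B_i$ (along with terminals) into a tuple of strings for $A$, subject to the well-nestedness constraint on how the component variables of distinct daughters may be interleaved.

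Next, I would form the product grammar $G' = G \times M$. Its non-terminals are pairs $(A, \tau)$ where $A$ is a non-terminal of $G$ of dimension $j$ and $\tau \in (Q \times Q)^j$ records the start and end state of $M$ for each of the $j$ components of $A$. For each production $A \to f(B_1, \dots, B_n)$ and each choice of state annotations on $A, B_1, \dots, B_n$, one includes the annotated production precisely when every terminal transition and every adjacency inside each assembled component is consistent with $\delta$. A fresh start symbol $S'$ is added with productions $S' \to S_{(q_0, q_f)}$ for each $q_f \in F$. A routine induction shows $L(G') = L_1 \cap L_2$.

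Finally, I would note that each non-terminal $(A, \tau)$ has the same dimension as $A$, so the dimension of $G'$ equals $d$; and well-nestedness is a purely combinatorial property of the nesting pattern of each $f$'s component variables, which is untouched by attaching state decorations, so $G'$ remains well-nested. Taking $k$ to be the value that \autoref{lem:pump} assigns to well-nested MCFGs of dimension $d$, one obtains the uniform bound. The main obstacle is exactly this last step: one must inspect the proof in \cite{MCFLpump} carefully enough to confirm that the pumping dimension depends only on $d$ and not on other parameters of the grammar such as the number of non-terminals, the number of productions, or the size of the terminal alphabet. Once the dependency is isolated, invariance of $d$ and of well-nestedness under the product construction immediately yields the lemma.
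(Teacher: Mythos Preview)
The paper does not give its own proof of this lemma; it is stated with attribution to \cite{SeikiKato}*{Prp.~6} (and \cite{MCFLpump} for the formulation), and the surrounding text only points to analogous closure results for regular, context-free, and tree-adjoining languages. So there is nothing in the paper to compare your argument against.

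On its own merits, your sketch is the standard route and is essentially correct. The product construction of a multiple context-free grammar with a DFA is classical (it already appears in Seki et al.'s original MCFG paper), and your observation that annotating non-terminals with state tuples leaves both the dimension and the nesting pattern of each production untouched is exactly what is needed to see that well-nestedness survives. The one point you flag as an obstacle---that the $k$ in \autoref{lem:pump} must be a function of the grammar's dimension $d$ alone---is genuine, and it does require going into \cite{MCFLpump}: there the pumping lemma for well-nested $m$-MCFLs produces at most $2m$ pumped substrings, so $k=2d$ works uniformly across all $L_2$. With that dependency confirmed, your argument closes.
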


For the equivalent on regular languages see \cite{Jewels}*{Thm.\ 2.7}.\footnote{For context-free languages see \cite{HopcroftUllman}*{Thm.\ 7.27}, and for tree adjoining languages see \cite{TAG}*{Thm.\ 4.5}.}

From here \autoref{thm:noCFL} will be a rather simple corollary of the following:

\begin{lemma}
  $\mathrm{PP}_2$ is not a well-nested MCFL.
\end{lemma}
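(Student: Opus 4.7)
The plan is to derive a contradiction from \autoref{lem:pump} by working inside an appropriate regular subset of $\PP 2$. Suppose toward contradiction that $\PP 2$ is a well-nested MCFL. By \autoref{lem:intersection} there is a fixed $k$, depending only on $\PP 2$, such that $\PP 2 \cap R$ is a well-nested MCFL with pumping dimension $k$ for \emph{every} regular $R \subseteq \{\ar,\bb,\Ar,\Bb\}^*$. Let $p$ be the pumping length of some such intersection chosen below.

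First I would pick a regular language $R$ with a rigid syllable template, such as $R = (\bb\ar^+\Bb\ar^+)^+$ or a suitable variant, so that every $w \in \PP 2 \cap R$ satisfies \goldC\ with a fixed distinguished letter $x$ and flanking letter $y$, forcing the cyclic flanking context of each occurrence of $x$ to be completely determined by a small number of syllable exponents. Next I would construct a sequence of witnesses $s_m \in \PP 2 \cap R$ with $|s_m| \to \infty$, arranged so that potential positivity of $s_m$ depends on a number of independent exponent constraints growing linearly in $m$. For $m$ large enough that this number exceeds $k$, and $|s_m| \geq p$, \autoref{lem:pump} forces a decomposition $s_m = v_0 w_1 v_1 \cdots w_k v_k$ with not every $w_i$ empty. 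The final step would be to argue that simultaneously iterating the $w_i$'s by $n \geq 2$ must change some syllable exponent in a way incompatible with Goldstein's uniform-flanking requirement, so that some iterate $v_0 w_1^n v_1 \cdots w_k^n v_k$ lies outside $\PP 2$, contradicting the pumping lemma.

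The main obstacle is the construction of the witnesses $s_m$. Many natural potentially positive words --- purely positive words, single blocks of the form $\bb\ar^n\Bb\ar^m$, or straightforward concatenations of such blocks --- remain potentially positive under a wide range of pumping decompositions, so that some particular choice of factors preserves $\PP 2$-membership. The witness family must therefore encode a tightly coupled ``exponent puzzle'' whose satisfiability under \goldC\ requires coordinated matching of more exponents than $k$ factors can independently perturb, exploiting the fact that the cyclic flanking in \goldC\ forces uniformity across \emph{all} occurrences of the distinguished letter. Establishing that \emph{every} $k$-bounded decomposition breaks this coordination, not merely some specific one, is the central technical challenge.
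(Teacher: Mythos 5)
Your high-level plan is the same as the paper's: use \autoref{lem:intersection} to fix the pumping dimension $k$ in advance, intersect $\PP 2$ with a carefully chosen regular language, build a witness whose $\PP 2$-membership couples more exponents together than $k$ factors can move coherently, and then invoke \autoref{lem:pump}. However, you explicitly stop short of supplying the one ingredient on which the entire argument turns: a concrete regular language and witness for which you can actually \emph{prove} the coupling constraint and then rule out every decomposition. That is not a minor detail to be filled in later --- it is the whole theorem, and your candidate $R=(\bb\ar^+\Bb\ar^+)^+$ in fact fails: every word in that language has $\ar$ appearing only with positive exponent, hence is potentially positive by (the $F_2$ case of) \autoref{prop:allbutone}, so pumping any factor keeps you inside $\PP 2$ and no contradiction can emerge.

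What the paper does concretely, and what your proposal is missing: it takes
\begin{align*}
\Lambda_k = \left\{\,\Bb\ar^{n_1}\Bb\ar^{n_2}\Bb\cdots\Bb\ar^{n_k}\Bb\ar\bb\Ar^m\bb\ar \;:\; n_1,\dots,n_k,m\geq 1\,\right\},
\end{align*}
with both a positive $\ar$-region and a single negative $\Ar^m$ syllable flanked by $\bb$. The crucial lemma is that $w\in\Lambda_k\cap\PP 2$ forces $n_i\geq m-1$ for every $i$: if some $n_i<m-1$, then $\bb\mapsto\bb\ar^{\min n_i}$ creates $\Bb\Bb$, so by \autoref{lemma:switch} one must switch to $x=\Ar$, but $\bb\Ar^m\bb$ has become $\bb\Ar^{m-\min n_i}\bb$ with exponent $\geq 2$, producing $\Ar\Ar$; hence \goldC\ fails. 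The witness is then $s$ with all $n_i=p$ and $m=p+1$, so the constraint is tight. The pumping argument observes that the $\Bb$'s and the terminal $\Bb\ar\bb$, $\bb\ar$ blocks cannot sit inside any $w_i$ (deleting them leaves $\Lambda_k$), that the $\Ar^{p+1}$ syllable cannot either (pumping up raises $m$ without raising all $n_i$), that no $\ar^p$ syllable can either (pumping down drops some $n_i$ below $m-1$), and finally that there are $k+1$ remaining syllable segments but only $k$ factors $w_i$, so some $w_i$ would have to straddle a forbidden block. That pigeonhole over $k+1$ segments versus $k$ factors, combined with the explicitly-proved constraint $n_i\geq m-1$, is the coordination argument you were gesturing at but did not construct.
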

\begin{proof}
  Assume by way of contradiction that $\mathrm{PP}_2$ is a well-nested MCFL with pumping dimension $k$.
  Let us consider the language $\Lambda_k$ consisting of words of the form:
  \begin{align*}
    \Bb\ar^{n_1}\Bb\ar^{n_2}\Bb\ar^{n_2}\Bb\dots\Bb\ar^{n_k}\Bb\ar\bb\Ar^m\bb\ar
  \end{align*}
  where $n_1,\dots,n_k,m\geq 1$.
  Since $\Lambda_k$ is regular, it follows from \autoref{lem:intersection} that $\mathrm{PP}_2$ is a well-nested MCFL only if $\Lambda_k\cap\mathrm{PP}_2$ is a well-nested MCFL.
  We will show first that if a word $w\in \Lambda_k$ is potentially positive then $\forall i:n_i\geq m-1$.

  Let us consider a word $s$ which is in the language $\Lambda_k$ but has some $n_i < m-1$.
  We note that $s$ satisfies \goldC\ with $x=\Bb$ and $y=\ar$.
  We will show that this word no longer satisfies \goldC\ after applying the automorphism $\bb\mapsto\bb\ar^n$ where $n$ is the minimum $n_i$ in $s$.
  After applying the automorphism, the syllable $\ar^{n}$ becomes empty, creating a $\Bb\Bb$ subword, so $x \neq \Bb$.
  By \autoref{lemma:switch}, the only other option is $x=\Ar$.
  However, $\Ar\Ar$ must appear as a subword because $\bb\Ar^m\bb$ appeared in the original word and we have only removed $n\leq m-2$ from the exponent.
  Since $s$ is automorphic to a word which does not satisfy \goldC\, it is not potentially positive.
  
  Now let the pumping length of $L_k\cap \mathrm{PP}_2$ be $p$.
  We consider the word
  \begin{align*}
    s=\Bb\ar^p\Bb\ar^p\Bb\dots\Bb\ar^p\Bb\ar\bb\Ar^{p+1}\bb\ar \in L_k.
  \end{align*}
  This word is potentially positive as we can demonstrate:
  \begin{align*}
    s&=\Bb\ar^p\Bb\ar^p\Bb\dots\Bb\ar^p\Bb\ar\bb\Ar^{p+1}\bb\ar \\
    &\mapsto\Bb^{k+1}\ar\bb\Ar\bb\ar \tag{$\bb\mapsto \bb\ar^p$} \\
    &\mapsto\ar\bb\Ar\bb\ar \tag{$\ar\mapsto \bb^{k+1}\ar$} \\
    &\mapsto\ar\bb\bb\ar\ar \tag{$\bb\mapsto \bb\ar$}
  \end{align*}
  And thus $s \in L_k\cap \mathrm{PP}_2$.
  
  $|s| = (k+1)(p+1)+5\geq p$ thus \autoref{lem:pump} should apply to this word.
  No $w_i$ can contain any of the underlined sections in the following since removing them (i.e.\ $v_0\,w_1^0\,v_1\,w_2^0\,v_2\,\dots\,w_k^0\,v_k$) makes the word no longer $L_k$:
  \begin{equation*}
    \underline\Bb\ar^p\underline\Bb\ar^p\underline\Bb\dots\underline\Bb\ar^p\underline{\Bb\ar\bb}\Ar^{p+1}\underline{\bb\ar}
  \end{equation*}
  There are $k+1$ separate segments remaining, so it is impossible for all of them to contain a $w_i$.
  Each segment is either a $\ar$ or $\Ar$ syllable.
  No $w_i$ can contain any part of the remaining $\Ar$ syllable, since increasing the power on $\Ar$ without increasing the powers of all the other $\ar$ syllables would make the word no longer potentially positive by the argument above.

  Furthermore, if some non-empty $w_i$ contained some part of a remaining $\ar$ syllable, removing the $w_i$ subword would decrease the exponent on that syllable.
  That would make it less than $m-1=p$.
  $m$ cannot decrease because the $\Ar$ syllable contains no $w_j$.
  Thus the word would no longer be potentially positive.

  There must be a non-empty $w_i$, but none can be contained in any part of $s$, so we have a contradiction and $\mathrm{PP}_2$ cannot be a well-nested MCFL.
\end{proof}

Now \autoref{thm:noCFL} will follow easily:
\begin{proof}[Proof of \autoref{thm:noCFL}]
By the main result of \cite{GC}, $\mathrm{PP}_r\cap F_n = \mathrm{PP}_n$ when $r\geq n$.
Since $F_r$ is regular, we can apply \autoref{lem:intersection} to see that $\mathrm{PP}_r$ for $r\geq 2$ is a well-nested MCFL only if $\mathrm{PP}_2$ is a well-nested MCFL.
Since $\mathrm{PP}_2$ is not a well-nested MCFL, no higher rank is either.
\end{proof}

\begin{corollary}
  There is no algorithm to decide cyclically-reduced potentially positive words using constant space.
\end{corollary}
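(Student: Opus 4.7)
The plan is to deduce this corollary directly from \autoref{thm:noCFL} by identifying ``constant space'' algorithms with finite state machines, and then invoking the inclusion of regular languages in the well-nested MCFLs.

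First I would recall the standard equivalence from computability theory: a decision algorithm that runs in constant space on a one-way (or two-way) input tape has only boundedly many possible internal configurations at each position, so its computation is simulated by a finite state machine, and the language it decides is regular. This is the sort of routine observation that I would cite rather than reprove, pointing the reader to a reference such as \cite{HopcroftUllman} for the precise equivalence between constant-space Turing machines and finite automata.

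Next I would observe that every regular language is a well-nested MCFL (the well-nested MCFLs form a superclass containing the regular languages, as noted in the paragraph preceding \autoref{thm:noCFL}). So if there were a constant-space algorithm deciding cyclically-reduced potentially positive words in $F_r$ for some $r\geq 2$, the set of cyclically-reduced elements of $\mathrm{PP}_r$ would be regular, hence a well-nested MCFL. Intersecting with the regular language of cyclically-reduced words and applying \autoref{lem:intersection} then gives $\mathrm{PP}_r$ itself as a well-nested MCFL, contradicting \autoref{thm:noCFL}.

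The only step that requires any care is the first one: making sure that the informal phrase ``constant space'' really does pin down the finite-state model rather than something more permissive. The main potential obstacle is that a reader might worry about models that use constant working memory but have access to auxiliary resources (e.g.\ two-way heads, nondeterminism, or multiple tapes); however, each of these is still known to recognize only regular languages when the working space is bounded by a constant, so the conclusion is unchanged. Once that identification is made, the corollary is an immediate consequence of \autoref{thm:noCFL} and requires no additional computation.
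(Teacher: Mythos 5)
Your argument is correct and matches the paper's, which likewise appeals to the classical fact that constant-space decidability implies regularity (the paper cites \cite{HoMLC} rather than \cite{HopcroftUllman}) and then invokes \autoref{thm:noCFL} via the inclusion of regular languages among well-nested MCFLs. The intermediate intersection step you add is harmless but unnecessary, since $\mathrm{PP}_r$ is already defined in the paper to be the set of cyclically-reduced potentially positive words.
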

This follows from a classic result in complexity theory (see \cite{HoMLC}) which tells us that if a set is decidable in constant space it is regular.

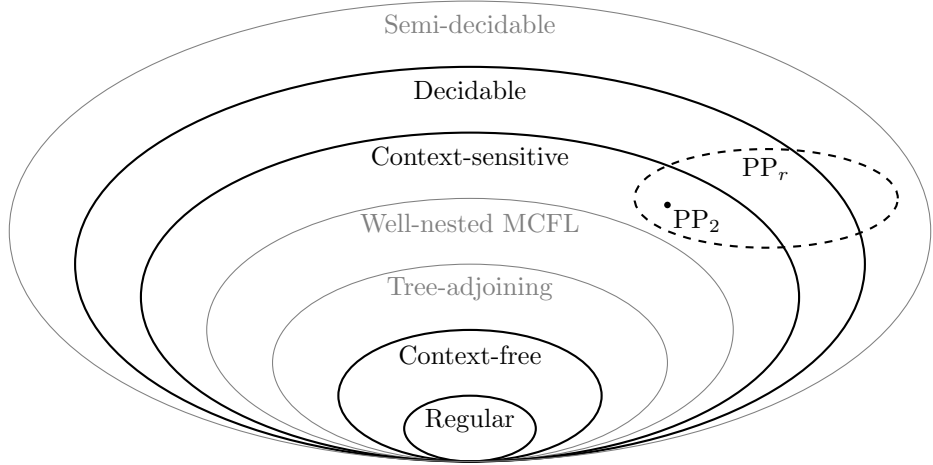
\begin{figure}[h]
\caption{
  An Euler diagram indicating the placement of potential positivity in relation to various classes of formal languages.
  The classes in the classical Chomsky hierarchy are drawn thicker and darker than the other classes.
  The dotted line indicates the greatest possible span of $\PP{r}$ for $r\geq 2$ with our current understanding.
}
\begin{tikzpicture}[scale=0.875]
    \draw[thick] (0,0) ellipse (1 and 0.5) node at (0,.125) {Regular};
    \draw[thick] (0,.5) ellipse (2 and 1) node at (0,1.125) {Context-free};
    \draw[thin,gray] (0,1) ellipse (3 and 1.5) node at (0,2.125) {Tree-adjoining};
    \draw[thin,gray] (0,1.5) ellipse (4 and 2) node at (0,3.125) {Well-nested MCFL};
    \draw[thick] (0,2) ellipse (5 and 2.5) node at (0,4.15) {Context-sensitive};
    \draw[thick] (0,2.5) ellipse (6 and 3) node at (0,5.15) {Decidable};
    \draw[thin,gray] (0,3) ellipse (7 and 3.5) node at (0,6.15) {Semi-decidable};
    \draw[thick,dashed] (4.5,3.5) ellipse (2 and 0.75) node at (4.5,3.95) {$\PP{r}$};
    \fill (3, 3.4) circle (.05) node at (3.45,3.15) {$\PP 2$};
\end{tikzpicture}
\end{figure}

\section{Towards non-trivial upper bounds}
Finding an upper bound for $F_3$ has proven to be more difficult.
Whereas in $F_2$ we were able to use the algorithms in \cite{Goldstein} and \cite{KHOO} to eliminate words and produce an upper bound, no algorithm is currently known for higher ranks.
In fact, the words known to not be potentially positive in $F_3$ are fairly restricted.
\begin{enumerate}
\item\label{itm:commuators} Words in the commutator subgroup $\left[F_3,F_3\right]$.
\item\label{itm:rank reduce} Words in $F_2$ which are not potentially positive under the automorphisms of $F_2$ (see \cite{GC}) and words which are automorphic to them.
\end{enumerate}
Theorem 3.9 in \cite{KHOOS} can be used to show that (\ref{itm:rank reduce}) is exponentially negligible.
A slight narrowing of that theorem to fit this paper:
\begin{theorem}[Thm 3.9]
  If $w\in F_r$ is a cyclically reduced word in the orbit of a word $v\in F_{r-1}$ then $w$ does not contain a primitivity blocking subword.
\end{theorem}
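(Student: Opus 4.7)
The plan is to establish the theorem by induction along a Whitehead chain connecting $w$ to a Whitehead-minimal representative of the orbit. Since $v \in F_{r-1}$ witnesses that the minimal length in the orbit is achieved by a word avoiding the generator $x_r$ entirely, Whitehead's peak reduction theorem gives us a sequence of length-nondecreasing Whitehead automorphisms from a minimal representative $v' \in F_{r-1}$ to $w$. The base case is then immediate: a primitivity blocking subword, by its definition, is an obstruction that witnesses the ``full usage'' of the ambient rank, so no word that fails to involve $x_r$ at all can contain one.

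For the inductive step, I would study the effect of a single Whitehead automorphism $\alpha$ of $F_r$ on the subword structure of a cyclically reduced word. A Whitehead automorphism either permutes (or inverts) generators, or acts by $x \mapsto a^{\epsilon_1} x a^{\epsilon_2}$ on a designated subset of generators, fixing a pivot $a$. Permutations clearly preserve the absence of primitivity blocking subwords (up to relabeling). For the substitution type, I would analyze how syllables of the output word correspond, after free and cyclic reduction, to syllables of the input word. The key structural claim to prove is: if $\alpha(u)$ cyclically reduces to a word containing a primitivity blocking subword, then $u$ already contains one. This is best framed by pulling the potential blocking subword back through the automorphism, using its block structure and the multiplier pattern of $\alpha$.

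The main obstacle is handling cancellation carefully. A Whitehead automorphism applied to a cyclically reduced word can produce substantial cancellation, and in principle this cancellation could forge primitivity-blocking patterns from otherwise benign input. To deal with this I would use the standard Whitehead graph bookkeeping: decompose $u$ into maximal blocks according to the partition induced by $\alpha$, and identify which boundaries of $\alpha(u)$ arise from surviving inserted letters versus cancellation junctions. A careful case analysis on whether the hypothetical blocking subword crosses such a junction or lies entirely in one transported block should reduce each case to a witness already inside $u$.

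Finally, chaining the inductive step along the entire Whitehead path from $v'$ to $w$ yields the result. The genuinely delicate work is isolating the correct structural invariant of a primitivity blocking subword which (i) is detected by the Whitehead graph at each intermediate stage, and (ii) is strong enough to rule out synthesis via cancellation; identifying this invariant cleanly will dictate the final form of the argument.
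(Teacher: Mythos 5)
The paper does not prove this statement; it is imported from the cited reference, so there is no in-paper proof to measure your proposal against. Judged on its own terms, the sketch has a genuine gap at the inductive step, which is where all the content lives. The claim you need and defer --- that a single length-nondecreasing Whitehead automorphism $\alpha$ cannot introduce a primitivity-blocking subword into a cyclically reduced word $u$ that has none --- is not established, and it is not clear it holds for arbitrary $u$. Note that ``has no primitivity-blocking subword'' is strictly weaker than ``lies in the $\operatorname{Aut}(F_r)$-orbit of a proper free factor,'' so stability of the former under a Whitehead move is not a formal consequence of the theorem you are trying to prove; and if you instead restrict the inductive step to the words actually appearing on your chain (all of which lie in the orbit of $F_{r-1}$), the step \emph{is} the theorem restated for one move, so the induction does not reduce the difficulty. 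Your own hedging (``the genuinely delicate work is isolating the correct structural invariant $\ldots$ will dictate the final form of the argument'') concedes that the load-bearing lemma is absent. There is also a secondary unjustified assertion: that a Whitehead-minimal representative of the orbit of $v$ can be chosen inside $F_{r-1}$.

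Peak reduction is also heavier machinery than the statement requires, and a direct route avoids the cancellation bookkeeping your sketch cannot pin down. Write $w = \phi(v)$ with $v \in F_{r-1}$ and $\phi \in \operatorname{Aut}(F_r)$. For every $n \geq 0$ the element $v^n x_r v^n$ is primitive in $F_r$, since together with $x_1,\dots,x_{r-1}$ it generates $F_r$; hence so is $\phi(v^n x_r v^n) = w^n\,\phi(x_r)\,w^n$. Because $w$ is cyclically reduced, the cancellation at the two junctions with $\phi(x_r)$ is bounded independently of $n$, so for $n$ large the cyclic reduction of this primitive element contains an uninterrupted copy of $w$ (indeed of any fixed power of $w$). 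Applying the same to cyclic permutations of $w$, which also lie in the orbit of $F_{r-1}$ and are cyclically reduced, shows that every cyclic subword of $w$ occurs in a cyclically reduced primitive, so $w$ contains no primitivity-blocking subword. This one-step argument replaces the entire Whitehead-chain induction.
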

By \cite{CSW}*{Example 1} if there is a forbidden subword for some subset of a free group then that set is negligible.

So then, (\ref{itm:commuators}) seems most promising.
If we recall the Cohen-Grigorchuk theorem:
\begin{theorem}[Cohen-Grigorchuk]\label{thm:Cohen-Grigorchuk}
  Let $\varphi : F_r \rightarrow G$ be a surjective group homomorphism.
  $G$ is amenable if and only if
  \[
  \lim_{n\rightarrow\infty}\sqrt[n]{\growth_n(\ker(\varphi))}= 2r-1
  \]
\end{theorem}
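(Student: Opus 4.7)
The plan is to deduce the theorem from two classical ingredients: Kesten's criterion for amenability in terms of the spectral radius of the simple random walk, and Grigorchuk's cogrowth formula relating this spectral radius to $\growth_n(\ker\varphi)$.

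First, I would set up the simple random walk on the Cayley graph of $G$ with respect to the generating set $\varphi(\{x_1^{\pm 1}, \dots, x_r^{\pm 1}\})$, and write $p_n$ for its $n$-step return probability. Letting $a_n$ denote the number of arbitrary (not necessarily freely reduced) length-$n$ words in $F_r$ that project to the identity in $G$, one has $p_n = a_n/(2r)^n$. By Kesten's spectral theorem, the spectral radius of the random walk operator satisfies
\begin{equation*}
\rho \;=\; \limsup_{n\to\infty} \sqrt[n]{p_n} \;=\; \frac{1}{2r}\limsup_{n\to\infty}\sqrt[n]{a_n}.
\end{equation*}

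Second, and this is the technical heart, I would establish Grigorchuk's cogrowth identity relating $a_n$ to $\gamma_n := \growth_n(\ker\varphi)$. The idea is that every word in $F_r$ projecting to the identity decomposes canonically into a freely reduced ``skeleton'' (an element of $\ker\varphi$) together with a properly bracketed family of cancellations inserted between the letters. Counting the ways to insert these bracketings is a classical Catalan-style enumeration, yielding an algebraic identity between the generating functions $A(z)=\sum a_n z^n$ and $\Gamma(z) = \sum \gamma_n z^n$. Extracting exponential growth rates and solving the resulting quadratic gives Grigorchuk's formula: writing $\alpha = \limsup\sqrt[n]{\gamma_n}$,
\begin{equation*}
\rho \;=\; \frac{1}{2r}\!\left(\alpha + \frac{2r-1}{\alpha}\right) \quad \text{if } \alpha \geq \sqrt{2r-1},
\end{equation*}
and $\rho = \sqrt{2r-1}/r$ otherwise. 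Substituting $\alpha = 2r-1$ gives $\rho = 1$, and the function $\alpha \mapsto \frac{1}{2r}(\alpha + (2r-1)/\alpha)$ is strictly increasing on $[\sqrt{2r-1}, 2r-1]$, so in fact $\rho = 1$ if and only if $\alpha = 2r-1$.

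Third, I would invoke Kesten's criterion: a finitely generated group $G$ is amenable if and only if the spectral radius of the simple random walk on any of its Cayley graphs equals $1$. Combining with the cogrowth identity proves both directions of the biconditional. The existence of the limit (not merely $\limsup$) follows from submultiplicativity of $\gamma_n$ together with $\gamma_n \leq (2r-1)^n$, so the statement with $\lim$ is justified.

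The main obstacle is the bookkeeping behind the cogrowth identity. One must carefully argue that the bracketed-cancellation decomposition is a bijection (not merely a surjection), handle the degenerate case where $\ker\varphi$ is trivial, and correctly identify which branch of the algebraic relation governs the dominant singularity of $A(z)$. A subtler point is to treat the regime $\alpha < \sqrt{2r-1}$, which corresponds to the ``free'' case where cogrowth alone no longer determines $\rho$; this regime is irrelevant to the equivalence we want, but must be excluded cleanly so that the biconditional holds unambiguously.
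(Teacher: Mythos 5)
The paper does not actually prove this statement: it cites Szwarc for a proof and moves on. Your sketch reconstructs Grigorchuk's original route --- Kesten's spectral criterion for amenability combined with the cogrowth formula $\rho = \frac{1}{2r}\bigl(\alpha + (2r-1)/\alpha\bigr)$ for $\alpha \geq \sqrt{2r-1}$, with the cogrowth--spectral-radius identity established by decomposing non-reduced null words into a reduced skeleton in $\ker\varphi$ plus a bracketed pattern of cancellations. Szwarc's proof, which the paper points to, reaches the same identity by a shorter operator-theoretic argument on $\ell^2(F_r)$ rather than by the Catalan-style combinatorics; so you are taking a genuinely different, more elementary but longer route to the same key relation, with both arguments closing via Kesten.

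There is one concrete error: the claim that ``the existence of the limit (not merely $\limsup$) follows from submultiplicativity of $\gamma_n$.'' The sequence $\gamma_n = \growth_n(\ker\varphi)$ is not submultiplicative --- a prefix of a reduced word in $\ker\varphi$ generally does not lie in $\ker\varphi$, so there is no injection from length-$(m+n)$ kernel elements into pairs of shorter ones --- and the limit can genuinely fail to exist. The paper's own application furnishes the counterexample: when $\ker\varphi = [F_r,F_r]$, every element of the kernel has even length, so $\gamma_n = 0$ for all odd $n$ and $\sqrt[n]{\gamma_n}$ does not converge. Cohen, Grigorchuk, and Szwarc all state the theorem with $\limsup$, and the $\lim$ in the paper's statement must be read that way (or along the subsequence where $\gamma_n > 0$). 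This does not damage the substance of the equivalence or the paper's subsequent corollary, which only needs the upper bound, but the justification you offer for upgrading $\limsup$ to $\lim$ does not hold.
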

We refer the reader to \cite{Szwarc} for a proof.
The implication here is that, because the abelianization of $F_r$ is amenable, the commutator subgroup coarsely (up to a sub-exponential factor) grows like $(2r-1)^n$.
This gives us the beginning of a non-trivial upper bound:

\begin{corollary}
  The set of potentially positive words in $F_r$ is not exponentially generic in the set of all words.
\end{corollary}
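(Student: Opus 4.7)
The plan is to exhibit a subset of the complement of the potentially positive words whose density in $F_r$ does not decay exponentially. The natural candidate is the commutator subgroup $[F_r, F_r]$, to which \autoref{thm:Cohen-Grigorchuk} applies directly via the abelianization homomorphism.

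First I would verify that no non-trivial element of $[F_r, F_r]$ is potentially positive. Automorphisms of $F_r$ preserve $[F_r, F_r]$, and the image of a non-empty positive word $x_{i_1}\cdots x_{i_k}$ under the abelianization $F_r\twoheadrightarrow \mathbb{Z}^r$ is a non-zero vector with all non-negative coordinates, so no non-empty positive word lies in $[F_r, F_r]$. Consequently no non-trivial potentially positive word lies in $[F_r, F_r]$, and $[F_r, F_r]\setminus\{e\}$ is contained in the complement of the set of potentially positive words in $F_r$.

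Next I would apply \autoref{thm:Cohen-Grigorchuk} to the abelianization $\varphi:F_r\twoheadrightarrow \mathbb{Z}^r$. Since $\mathbb{Z}^r$ is amenable,
\[
\lim_{n\to\infty} \sqrt[n]{\growth_n([F_r, F_r])} = 2r-1.
\]
Combined with $\growth_n(F_r) = 2r(2r-1)^{n-1}$, for any fixed $C > 1$ we may pick $\epsilon > 0$ small enough that $(2r-1-\epsilon)/(2r-1) > 1/C$. Then for all sufficiently large $n$,
\[
\frac{\growth_n([F_r, F_r])}{\growth_n(F_r)} \geq \frac{(2r-1-\epsilon)^n}{2r(2r-1)^{n-1}} = \frac{1}{2r}\left(\frac{2r-1-\epsilon}{2r-1}\right)^n,
\]
which is not in $O(C^{-n})$. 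Hence the density of $[F_r, F_r]$ in $F_r$, and therefore the density of the complement of the potentially positive words in $F_r$, cannot be in $O(C^{-n})$ for any $C>1$, giving the desired non-exponential-genericity.

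The main conceptual step is the short algebraic observation in the first paragraph, which reduces the problem to a density computation on a normal subgroup with amenable quotient; once that is in place, the Cohen-Grigorchuk theorem does the analytic work almost automatically. I anticipate no real obstacle, since this is essentially the standard application of Cohen-Grigorchuk for density lower bounds on kernels of surjections onto amenable groups.
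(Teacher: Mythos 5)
Your proof is correct and follows essentially the same route as the paper: both bound the complement of the potentially positive set from below by the commutator subgroup and invoke the Cohen--Grigorchuk theorem to rule out exponential decay of its density. The only substantive difference is that you explicitly justify that $[F_r,F_r]$ contains no nontrivial potentially positive word (via the abelianization and the fact that $[F_r,F_r]$ is characteristic), whereas the paper takes this as a previously listed fact and phrases the final step as a one-line contradiction rather than your direct density lower bound.
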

\begin{proof}
  The set of all words grow as $\Theta((2r-1)^n)$.
  If the set of potentially positive words were to be exponentially generic, then its compliment, which contains the commutator subgroup, would have to grow as $O(\varepsilon^n(2r-1)^n)$ for some $\varepsilon< 1$.

  Thus
  \[
  \lim_{n\rightarrow\infty}\sqrt[n]{\growth_n(\left[F_r,F_r\right])} \leq \varepsilon(2r-1) < 2r-1
  \]
  a contradiction with \autoref{thm:Cohen-Grigorchuk}.
\end{proof}

A more precise asymptotic for the growth of $\left[F_r,F_r\right]$ can be obtained from \cite{Rivin2} as
\begin{equation}
\dfrac{1}{\sqrt{n}^r}\left(2r-1\right)^n
\end{equation}
This tells us that the commutator subgroup is in fact negligible (only not exponentially so).

We summarize the current situation in this table:
\begin{center}
\begin{tabular}{|r|c|c|c|c|}
\hline
& pos. & prim. & p. pos.\footnote{The numeric values provided in this column are approximate to the thousandths place. We provide the method by which the exact values can be derived elsewhere.} & all \\
\hline\hline
$F_2$ & $\Theta\left(2^n\right)$ & $\Theta\left(\sqrt{3}^n\right)$ & $\Theta\left(2.505^n\right)$ & $\Theta(3^n)$ \\ \hline
$F_3$ & $\Theta\left(3^n\right)$ & $\Theta\left(n3^n\right)$ & $\Omega\left(4.024^n\right)$ & $\Theta(5^n)$ \\ \hline
$F_4$ & $\Theta\left(4^n\right)$ & $\Theta\left(n5^n\right)$ & $\Omega\left(5.746^n\right)$ & $\Theta(7^n)$ \\ \hline
$F_5$ & $\Theta\left(5^n\right)$ & $\Theta\left(n7^n\right)$ & $\Omega\left(7.509^n\right)$ & $\Theta(9^n)$ \\ \hline
$F_6$ & $\Theta\left(6^n\right)$ & $\Theta\left(n9^n\right)$ & $\Omega\left(9.290^n\right)$ & $\Theta(11^n)$ \\ \hline
$F_7$ & $\Theta\left(7^n\right)$ & $\Theta\left(n11^n\right)$ & $\Omega\left(11.083^n\right)$ & $\Theta(13^n)$ \\ \hline
$F_r$ & $\Theta\left(r^n\right)$ & $\Theta\left(n(2r-3)^n\right)$ & $\Omega\left((2r-3)^n\right)$ & $\Theta\left((2r-1)^n\right)$ \\ \hline
\end{tabular}
\end{center}

\bibliography{growth}

@article {CSW,
    AUTHOR = {Ceccherini-Silberstein, Tullio and Woess, Wolfgang},
     TITLE = {Growth and ergodicity of context-free languages},
   JOURNAL = {Trans. Amer. Math. Soc.},
  FJOURNAL = {Transactions of the American Mathematical Society},
    VOLUME = {354},
      YEAR = {2002},
    NUMBER = {11},
     PAGES = {4597--4625},
      ISSN = {0002-9947,1088-6850},
   MRCLASS = {68Q45 (05A16 20F65 68Q42)},
  MRNUMBER = {1926891},
MRREVIEWER = {Peter\ R. J. Asveld},
       DOI = {10.1090/S0002-9947-02-03048-9},
       URL = {https://doi.org/10.1090/S0002-9947-02-03048-9},
}

@incollection{OP2002,
	address = {Providence, Rhode Island},
	title = {Open problems in combinatorial group theory},
	volume = {250},
	isbn = {978-0-8218-1053-8 978-0-8218-7840-8},
	url = {http://www.ams.org/conm/250/},
	language = {en},
	urldate = {2025-05-26},
	booktitle = {Contemporary {Mathematics}},
	publisher = {American Mathematical Society},
	author = {Baumslag, Gilbert and Myasnikov, Alexei G. and Shpilrain, Vladimir},
	editor = {Gilman, Robert H.},
	year = {1999},
	doi = {10.1090/conm/250/03843},
	pages = {1--27},
}

@article{PuderWu,
    author = {Puder, D. and Wu, C.},
    title = {Growth of primitive elements in free groups},
    journal = {Journal of the London Mathematical Society},
    volume = {90},
    number = {1},
    pages = {89-104},
    year = {2014},
    month = {05},
    abstract = {In the free group \$F\_\{k\}\$, an element is said to be primitive if it belongs to a free generating set. In this paper, we describe what a generic primitive element looks like. We prove that up to conjugation, a random primitive word of length \$N\$ contains one of the letters exactly once asymptotically almost surely (as \$N\\to \\infty\$).This also solves a question raised by Baumslag–Myasnikov–Shpilrain (Contemp. Math. 296 (2002) 1–38). Let \$p\_\{k,N\}\$ be the number of primitive words of length \$N\$ in \$F\_\{k\}\$. We show that for \$k\\geq 3\$, the exponential growth rate of \$p\_\{k,N\}\$ is \$2k-3\$. Our proof also works for giving the exact growth rate of the larger class of elements belonging to a proper free factor.},
    issn = {0024-6107},
    doi = {10.1112/jlms/jdu009},
    url = {https://doi.org/10.1112/jlms/jdu009},
    eprint = {https://academic.oup.com/jlms/article-pdf/90/1/89/9682782/jdu009.pdf},
}

@article{F2prim,
    author = {Rivin, Igor},
    title = {A remark on `{Counting} primitive elements in free groups' (By {J}.\ {Burillo} and {E}.\ {Ventura})},
    journal = {Geometriae Dedicata},
    year = {2004},
    month = aug,
    day = {01},
    volume = {107},
    number = {1},
    pages = {99-100},
    issn = {1572-9168},
    doi = {10.1023/B:GEOM.0000049102.82732.1f},
    url = {https://doi.org/10.1023/B:GEOM.0000049102.82732.1f}
}

@article{Goldstein,
	title = {An algorithm for potentially positive words in ${F}_2$},
	volume = {421},
	url = {http://www.ams.org/conm/421/},
	doi = {10.1090/conm/421/08035},
	abstract = {Let Fn be the free group with basis \{x1,x2, ... ,xn\}. A word win Fn is said to be potentially positive if there exists an automorphism .P of Fn such that .P(w) is a product of only the basis elements in Fn not their inverses. Question F34a in [1] asks if there exists an algorithm to determine whether or not a word w is potentially positive. This paper produces an algorithm in the special case that n = 2.},
	language = {en},
	urldate = {2025-08-06},
	journal = {Contemporary Mathematics},
	author = {Goldstein, Richard},
	year = {2006},
	keywords = {Free group:Potential positivity},
	pages = {157--168},
}

@book{dynamicsbook,
	address = {Cambridge},
	edition = {Second edition},
	title = {An introduction to symbolic dynamics and coding},
	isbn = {978-1-108-82028-8},
	abstract = {Symbolic dynamics is a mature yet rapidly developing area of dynamical systems. It has established strong connections with many areas, including linear algebra, graph theory, probability, group theory, and the theory of computation, as well as data storage, statistical mechanics, and \$C{\textasciicircum}*\$-algebras. This Second Edition maintains the introductory character of the original 1995 edition as a general textbook on symbolic dynamics and its applications to coding. It is written at an elementary level and aimed at students, well-established researchers, and experts in mathematics, electrical engineering, and computer science. Topics are carefully developed and motivated with many illustrative examples. There are more than 500 exercises to test the reader's understanding. In addition to a chapter in the First Edition on advanced topics and a comprehensive bibliography, the Second Edition includes a detailed Addendum, with companion bibliography, describing major developments and new research directions since publication of the First Edition},
	language = {eng},
	publisher = {Cambridge University Press},
	author = {Lind, Douglas A. and Marcus, Brian},
	year = {2021},
}

@article{Szwarc,
	title = {A short proof of the {Grigorchuk}-{Cohen} cogrowth theorem},
	volume = {106},
	abstract = {Let G be a group generated by gl,...,gr. 
There are exactly  2r(2r - l)n-1 reduced words in gl,..,gr of length n. Part of them, say Yn represents identity element of G. We give a short proof of the theorem of Grigorchuk and Cohen which states that G is amenable if and only if Y = 2r - 1. Moreover we derive some new properties of the generating function.},
	language = {en},
	journal = {Journal of the American Mathematical Society},
	author = {Szwarc, Ryszard},
	month = jul,
	year = {1989},
	keywords = {Free group},
	pages = {663--665},
}

@article{GC,
	title = {Stability of numerical invariants in free groups},
	volume = {33},
	issn = {0092-7872, 1532-4125},
	url = {http://www.tandfonline.com/doi/abs/10.1080/00927870500261405},
	doi = {10.1080/00927870500261405},
	language = {en},
	number = {11},
	urldate = {2025-08-21},
	journal = {Communications in Algebra},
	author = {Clark, Aaron and Goldstein, Richard},
	month = oct,
	year = {2005},
	keywords = {Free group:Potential positivity},
	pages = {4097--4104},
}

@article{Rivin2,
    author = {Igor Rivin},
    title = {{Growth in free groups (and other stories)—twelve years later}},
    volume = {54},
    journal = {Illinois Journal of Mathematics},
    number = {1},
    publisher = {Duke University Press},
    pages = {327 -- 370},
    year = {2010},
    doi = {10.1215/ijm/1299679752},
    URL = {https://doi.org/10.1215/ijm/1299679752}
}

@article{Baumslag,
	title = {Positive one-relator groups},
	volume = {156},
	abstract = {A group G which can be defined by a single relation in which there are no negative exponents is residually solvable. If G is also torsion-free then it is locally indicable and hence its integral group ring has no zero divisors.},
	language = {en},
	journal = {Transactions of the American Mathematical Society},
	author = {Baumslag, Gilbert},
	month = may,
	year = {1971},
}

@article{Wise,
	title = {The residual finiteness of positive one-relator groups},
	volume = {76},
	issn = {0010-2571, 1420-8946},
	url = {https://ems.press/doi/10.1007/pl00000381},
	doi = {10.1007/pl00000381},
	abstract = {It is proven that every positive one-relator group which satisfies the
              
                \{{\textbackslash}rm C\}'(\{1{\textbackslash}over6\})
              
              condition has a finite index subgroup which splits as a free product of two free groups amalgamating a finitely generated malnormal subgroup. As a consequence, it is shown that every
              
                \{{\textbackslash}rm C\}'(\{1{\textbackslash}over6\})
              
              positive one-relator group is residually finite. It is shown that positive one-relator groups are generically
              
                \{{\textbackslash}rm C\}'(\{1{\textbackslash}over6\})
              
              and hence generically residually finite. A new method is given for recognizing malnormal subgroups of free groups. This method employs a 'small cancellation theory' for maps between graphs.},
	language = {en},
	number = {2},
	urldate = {2025-09-19},
	journal = {Comment. Math. Helv.},
	author = {Wise, Daniel T.},
	month = jun,
	year = {2001},
	pages = {314--338},
}

@book{HopcroftUllman,
	address = {Bosten San Francisco Munich},
	edition = {3},
	title = {Introduction to automata theory, languages, and computation},
	isbn = {978-0-321-45536-9},
	language = {eng},
	publisher = {Pearson},
	author = {Hopcroft, John E. and Motwani, Rajeev and Ullman, Jeffrey D.},
	year = {2007},
}

@book{Jewels,
	address = {Rockville, Md},
	title = {Jewels of formal language theory},
	isbn = {978-0-914894-69-8 978-0-273-08522-5},
	language = {eng},
	publisher = {Computer Science Press},
	author = {Salomaa, Arto},
	year = {1981},
}

@inproceedings{MCFLpump,
	address = {Berlin, Heidelberg},
	title = {The pumping lemma for well-nested multiple context-free languages},
	volume = {5583},
	isbn = {978-3-642-02736-9 978-3-642-02737-6},
	url = {http://link.springer.com/10.1007/978-3-642-02737-6_25},
	abstract = {Seki et al. (1991) proved a rather weak pumping lemma for multiple context-free languages, which says that any inﬁnite m-multiple context-free language contains a string that is pumpable at some 2m substrings. We prove a pumping lemma of the usual universal form for the subclass consisting of well-nested multiple context-free languages. This is the same class of languages generated by non-duplicating macro grammars and by coupled-context-free grammars.},
	language = {en},
	urldate = {2025-10-12},
	booktitle = {Developments in {Language} {Theory}},
	publisher = {Springer Berlin Heidelberg},
	author = {Kanazawa, Makoto},
	year = {2009},
	doi = {10.1007/978-3-642-02737-6_25},
	note = {Series Title: Lecture Notes in Computer Science},
	pages = {312--325},
}

@article{KHOO,
	title = {Complexity of recognizing potentially positive elements in the free group of rank 2},
	abstract = {Let Fr = F (x1, x2, . . . , xr) be the free group on r generators. A word w ∈ Fr is called potentially positive if there exists an automorphism φ of Fr which sends w to a word written solely in x1, . . . , xr and not their inverses. We find new necessary conditions for a word w to be potentially positive in the case where r = 2, then use them to produce a new proof of a necessary condition previously given by Goldstein in [5]. We use this condition to give a new algorithm for deciding whether an element in the free group of rank 2 is potentially positive, then show that its worst-case time complexity is quadratic in the length of the input. We show further that there is an algorithm which solves this problem in constant (i.e., independent of the length of the input) average-case time and generic-case time bounded above by any unbounded increasing function.},
	language = {en},
	author = {Koch-Hyde, Lucy and O’Connor, Siobhan and Olive, Eamonn},
	journal = {Int. J. of Algebra and Comput.},
    year = {2025},
}

@article{SeikiKato,
	title = {On the generative power of multiple context-free grammars and macro grammars},
	volume = {E91-D},
	issn = {0916-8532, 1745-1361},
	url = {http://joi.jlc.jst.go.jp/JST.JSTAGE/transinf/E91.D.209?from=CrossRef},
	doi = {10.1093/ietisy/e91-d.2.209},
	language = {en},
	number = {2},
	urldate = {2025-10-13},
	journal = {IEICE Transactions on Information and Systems},
	author = {Seki, H. and Kato, Y.},
	month = feb,
	year = {2008},
	pages = {209--221},
}

@inproceedings{HoMLC,
	address = {Ann Arbor, MI, USA},
	title = {Hierarchies of memory-limited computations},
	url = {http://ieeexplore.ieee.org/document/5397244/},
	doi = {10.1109/FOCS.1965.11},
	language = {en},
	urldate = {2025-10-13},
	booktitle = {6th {Annual} {Symposium} on {Switching} {Circuit} {Theory} and {Logical} {Design} ({SWCT} 1965)},
	publisher = {IEEE},
	author = {Stearns, R. E. and Hartmanis, J. and Lewis, P. M.},
	year = {1965},
	pages = {179--190},
}

@inproceedings{TAG,
	address = {Stanford, California},
	title = {Characterizing structural descriptions produced by various grammatical formalisms},
	url = {http://portal.acm.org/citation.cfm?doid=981175.981190},
	doi = {10.3115/981175.981190},
	abstract = {We eoasid« the structural descriptiODI producecl by vmOUi grammatical formalisms in tams of the complexity of the paths and the relationship betweeD paths iD the sets of structural descriptiooa that each system caa leaente. Ia coasideriDa the re{\textasciitilde}tiooship betweeD fotmalislDl, we show that it is useful to abstract away from the details of the formalism, IDd examine the nature of their derivation process u reflected by properties of their i:UriWJliM treu. We fiDd that several of the formalisms considered caD be seeD U beiDg closely re1aled siDce they have derivation tree sea with the same SlnIcmre • thole produced by Cootext-Free GrammIn. Oa the basis of thi. observation, we describe. clua of formalisml which we caD LiDear ContextFree Rewriq Systems, mel show they Ire recopizable ill polyDOlDial time aDd leDeraIe oat)' semiliDear I\_SUSes.},
	language = {en},
	urldate = {2025-10-14},
	booktitle = {Proceedings of the 25th annual meeting on {Association} for {Computational} {Linguistics}  -},
	publisher = {Association for Computational Linguistics},
	author = {Vijayshanker, K. and Weir, David J. and Joshi, Aravind K.},
	year = {1987},
	pages = {104--111},
}

@article{Linton,
	title = {Residually rationally solvable one‐relator groups},
	issn = {0024-6093, 1469-2120},
	url = {https://londmathsoc.onlinelibrary.wiley.com/doi/10.1112/blms.70210},
	doi = {10.1112/blms.70210},
	abstract = {We show that the intersection of the rational derived series of a one-relator group is rationally perfect and is normally generated by a single element. As a corollary, we characterise precisely when a one-relator group is residually rationally solvable.},
	language = {en},
	urldate = {2025-12-04},
	journal = {Bull. of London Math. Soc.},
	author = {Linton, Marco},
	month = oct,
	year = {2025},
	keywords = {Mathematics - Group Theory},
	pages = {blms.70210},
}

@article{KHOOS,
	title = {Orbit-blocking words in free groups},
	abstract = {By strengthening known results about primitivity-blocking words in free groups, we prove that for any nontrivial element w of a free group of finite rank, there are words that cannot be subwords of any cyclically reduced automorphic image of w. This has implications for the average-case complexity of a variant of Whitehead's problem.},
	author = {Koch-Hyde, Lucy and O'Connor, Siobhan and Olive, Eamonn and Shpilrain, Vladimir},
	year = {2025},
    journal = {J. Group Theory},
	keywords = {Computer Science - Computational Complexity, Mathematics - Group Theory},
}

\end{document}